\documentclass{article}
\usepackage[a4paper, total={6.5in, 10in}]{geometry}

\usepackage[utf8]{inputenc}
\usepackage{authblk}
\usepackage{float}
\usepackage{color}
\usepackage{mathtools}
\usepackage{amsmath,amssymb,hyperref, amsthm}
\usepackage{graphicx}
\usepackage{subcaption}
\usepackage{algorithm}%
\usepackage{algorithmic}%
\usepackage{tikz}
\usetikzlibrary{arrows}
\usepackage{xcolor}
\usepackage{cite}
\usepackage{enumerate}
\newcommand{\R}{\mathbb{R}}
\newtheorem{theorem}{Theorem}[section]
\newtheorem{corollary}[theorem]{Corollary}

\newtheorem{lemma}[theorem]{Lemma}
\newtheorem{example}[theorem]{Example}
\newtheorem{conj}[theorem]{Conjecture}

\newtheorem{definition}[theorem]{Definition}

\newtheorem{proposition}{Proposition}[section]
\newtheorem{remark}{Remark}[section]

\newtheorem*{model*}{Model}

\newcommand {\prob}{\mathbb{P}}

\newcommand{\IR}{\mbox{${\mathbb R}$}}

\newcommand{\IE}{\mbox{${\mathbb E}$}} 

\newcommand{\I}{\mbox{${\mathbb I}$}}

\newcommand{\EE}{\mbox{${\mathcal E}$}} 
\newcommand{\FF}{\mbox{${\mathcal F}$}}

\newcommand{\GG}{\mbox{${\mathcal G}$}}

\newcommand{\VV}{\mbox{${\mathcal V}$}} 
\newcommand{\PP}{\mbox{${\mathcal P}$}} 
\newcommand{\DD}{\mbox{${\mathcal D}$}}

\newcommand{\tA}{\mbox{$\widetilde{A}$}}

\newcommand{\Z}{\mbox{${{\mathbf Z}}$}}

\newcommand{\bone}{\mathbf{1}}
\newcommand{\bzero}{\mathbf{0}}

\newcommand{\bZ}{\mbox{${\mathbf Z}$}}

\title{Interacting Stochastic Systems with Agent-Based Sampling and Connection-Based Reinforcement}

\author[1]{Yogesh Dahiya%
\thanks{\texttt{ph20002@iisermohali.ac.in}}}

\author[2]{Gursharn Kaur%
\thanks{\texttt{gursharn@virginia.edu}}}

\author[1]{Neeraja Sahasrabudhe%
\thanks{\texttt{neeraja@iisermohali.ac.in}}}

\affil[1]{Indian Institute of Science Education and Research Mohali
(IISERM), India}

\affil[2]{Biocomplexity Institute, University of Virginia, USA}\date{}

\begin{document}

\maketitle

\begin{abstract}
We introduce a general class of interacting reinforced stochastic systems evolving on a network. The dynamics are governed by two distinct interaction structures: a sampling network, which determines how information is acquired, and a reinforcement network, which determines how the sampled information influences future evolution. Processes generate Bernoulli observations through preferential, de-preferential, or mixed sampling, while observations are transmitted through an edge-dependent affine reinforcement mechanism.

We formulate the dynamics as a stochastic approximation scheme and study their asymptotic behaviour. We establish sufficient conditions for almost sure convergence to a deterministic limit and, when the sampling and reinforcement graphs coincide, derive graph-theoretic criteria involving odd cycles, mixed-sampling vertices, and subcritical reinforcement. Under additional structural assumptions, we establish almost sure convergence and partial synchronization in a random-limit regime. In the deterministic regime, we also prove a law of the iterated logarithm and a central limit theorem under suitable spectral and regularity conditions. Finally, we show how the framework encompasses interacting urn models with edge-dependent reinforcement, urns with memory, and random walks with memory.
\end{abstract}

\section{Introduction}
Interacting stochastic systems arise naturally when the evolution of several random processes is coupled through a network. In many such systems, each component evolves depending on random observations of the current and/or past states of the other components. Reinforced stochastic processes provide an important framework for studying such systems. A central question is how local interaction rules, combined with reinforcement, give rise to macroscopic behaviour. Phenomena such as consensus formation (or synchronization), clustering, and polarization arise naturally in these systems. Understanding these effects requires analysing the interplay between reinforcement and the underlying network structure. The manner in which influence propagates across a network, together with the way past interactions are accumulated, plays a fundamental role in determining the long-term behaviour of the system.

In this direction, Aletti, Crimaldi and Ghiglietti \cite{AoP2017,SPA2024networks} consider networks of reinforced stochastic processes in which the probability of a new observation at a vertex is determined by a weighted combination of the current states of the processes in the network. Their work establishes almost sure synchronization and corresponding central limit theorems under appropriate conditions, and subsequently provides a first-order asymptotic classification in terms of the interaction network, including regimes of complete synchronization, convergence without synchronization, and synchronization within cyclic classes with asymptotic periodicity. Our framework differs in two related aspects. First, the mechanism governing how a process samples the states of other processes is separated from the mechanism by which the resulting observation is reinforced: sampling and reinforcement may be governed by distinct directed networks, and reinforcement is allowed to depend on the individual edge through which an observation is transmitted. Second, even when the sampling and reinforcement networks coincide, the resulting drift is not determined simply by a single adjacency or weighted-adjacency matrix. Rather, the sampling and reinforcement mechanisms combine to produce a drift involving their product, together with the node-dependent sampling parameters. This leads to a richer dependence of the asymptotic behaviour on the network structure, including effects of graph periodicity and the interaction between sampling and reinforcement.

We develop a framework for interacting reinforced stochastic systems in which sampling and reinforcement are governed by two distinct networks. The first network describes sampling interactions and determines how the current state variables are combined to generate observations. The second network describes reinforcement interactions and determines how those observations contribute to the evolution of the state variables. Unlike classical models, reinforcement is assigned through individual edges rather than vertices, allowing different interactions to induce different reinforcement effects. Sampling behaviour ranges from preferential to de-preferential, with mixed-sampling rules interpolating between the two extremes. While preferential and de-preferential sampling describe whether a process tends to align with or oppose the information available in its neighbourhood, reinforcement is further classified as critical or subcritical according to the total strength of the incoming feedback. In the critical case, the incoming reinforcement can sustain the full strength of the interaction, whereas in the subcritical case the total feedback is strictly weaker. This distinction plays an important role in determining whether randomness can persist asymptotically. Preferential and de-preferential sampling have previously been studied in interacting urn models; see, for example, \cite{bandyopadhyay2018linear,kaur2025interacting}.

We show that the asymptotic behaviour is jointly governed by the topology of the sampling and reinforcement networks and by the strength and type of reinforcement. Using the stochastic approximation representation, we first study the deterministic-limit regime through the spectral properties of the associated drift matrix. When the sampling and reinforcement graphs coincide, we obtain graph-theoretic conditions under which the drift matrix is invertible and identify the roles played by odd cycles, graph periodicity, mixed-sampling vertices, and subcritical reinforcement. We also study a random-limit regime and, under suitable assumptions, establish almost sure convergence with partial synchronization, whereby different subsets of vertices synchronize to common random limits.

Our analysis goes beyond first-order convergence to study fluctuations in the deterministic regime. We prove a law of the iterated logarithm and a central limit theorem under appropriate regularity conditions. The limiting fluctuations depend explicitly on the reinforcement mechanism and the spectral structure of the drift operator.

The framework encompasses several classes of interacting stochastic models. Urn models provide a natural probabilistic setting for studying reinforcement mechanisms, and interacting urn models have been studied extensively in \cite{MR3217785, crimaldi2016fluctuation, aletti2017interacting, sahasrabudhe2016synchronization, kaur2023interacting, crimaldi2023interacting, kaur2025interacting, MR3269167, MR3346459, MR2079914}. We show how \emph{urns with memory} can be represented by augmenting the network with auxiliary memory vertices. This construction naturally requires distinguishing between sampling and reinforcement graphs and illustrates why reinforcement must be specified at the level of edges rather than vertices. A second application concerns random walks with memory, providing a connection between the general framework and reinforced random walk models, including elephant random walks (ERWs).

The remainder of the paper is organized as follows. In Section~\ref{sec:model}, we introduce the model and derive its stochastic approximation representation. Section~\ref{sec:convergence_deter} studies the deterministic-limit regime and establishes graph-theoretic criteria for convergence, while Section~\ref{sec:LILCLT} presents the corresponding fluctuation results. Section~\ref{sec:convergence_random} investigates the random-limit regime and establishes the structural properties of the limiting random vector. The proofs of the results in Section~\ref{sec:LILCLT} and \ref{sec:convergence_random} are presented in Section~\ref{sec:proofs}. Section~\ref{sec:App} discusses several applications, including interacting urn processes, memory-based reinforcement models, and random walks. Finally, Section~\ref{sec:conc} discusses extensions and open problems. Appendix~\ref{sec:extra} contains supplementary results, while Appendix~\ref{app:appendix} collects the auxiliary results used in the proofs.

\section{Interacting Reinforced Stochastic Systems: Model Description}\label{sec:model}

Consider a finite collection of interacting reinforced stochastic processes indexed by the vertex set $\VV=[N] = \{1,\ldots,N\}$. The
evolution of the system is governed by two directed weighted graphs
\[
\mathcal G^S=(\VV,\mathcal E^S),\quad
\mathcal G^R=(\VV,\mathcal E^R),
\]
which describe the sampling and reinforcement mechanisms, respectively. 
Associated with each vertex $i\in\VV$ is a stochastic process $\{Z_t(i)\}_{t\ge0}$ taking values in $[0,1]$. The state of the system at time $t$ is represented by
\[
\Z_t=\left(Z_t(1), \ldots, Z_t(N) \right).
\]
Given the current state, the dynamics evolve recursively through two successive operations. First, a collection of random observations is generated according to the sampling mechanism. These observations are then used by the reinforcement mechanism to produce the next state vector $\Z_{t+1}$.


\paragraph{Sampling dynamics} 
Let $W=(w_{kj})_{k,j\in \VV}$ be a non-negative matrix of sampling weights, where \(w_{kj}>0\)  if and only if \((k,j)\in\mathcal E^S\). Since only the relative magnitudes of the sampling weights are relevant, any non-negative matrix  $W$ may be normalized column-wise whenever the column sum is positive. This normalization does not alter the dynamics and therefore, throughout the paper, we assume without loss of generality that $W$ is column-stochastic: $\sum_{k=1}^N w_{kj}=1$, for every $j\in \VV$. For each vertex $j\in \VV$, define the \emph{sampling neighborhood} of vertex $j$ by $N_W(j)=\{k\in \VV:w_{kj}>0\}$
. Associated with each vertex $j\in\VV$ is a sampling parameter $q_j\in[0,1]$.  Process $j$ samples preferentially with probability $q_j$, and de-preferentially with probability $1-q_j$. 

Given the filtration  $\FF_t = \sigma\!\left( Z_s(i): 0\le s\le t,\; i\in\VV \right)$, the sampling indicators $\{\I_{t+1}(i)\}_{i\in[N]}$ are independent Bernoulli random variables satisfying:
\begin{equation} \label{eq:sampling_indicator}
    \I_{t+1}(i) =
\begin{cases}
1 & \text{w.p.} \quad  q_i \left( \sum_{k \in \VV} w_{ki} Z_t(k) \right) + (1 - q_i)\left(1 - \sum_{k \in \VV} w_{ki} Z_t(k)\right) \\
0 & \text{otherwise}.
\end{cases}
\end{equation}
Note that when $q_i = 1/2$, the sampling indicators are Bernoulli(1/2) and independent of all the other processes in the system. As in \cite{kaur2025interacting}, we call the sampling at node $i$  \textbf{\textit{preferential}} if $q_i=1$; \textbf{\textit{de-preferential}} if $q_i=0$; \textbf{\textit{mixed-sampling}} if $q_i\in(0,1)$. We denote the set of nodes with preferential and de-preferential sampling by $\mathcal{P}$ and $\mathcal{D}$ respectively. 

\paragraph{Edge-Based Reinforcement} 
Let $\GG^R=(\VV,\EE^R) $  be the reinforcement graph and define $N(i) = \{j\in\VV:(j,i)\in\EE^R\}.$ To every reinforcement edge $(j,i)\in\mathcal{E}^R$, we associate a reinforcement map $R_{ji}:[0,1]\to\mathbb{R},$
which specifies the contribution of the observation at vertex $j$ to the evolution of the state variable $Z_t(i)$. The collection $\mathcal{R}=\{R_{ji}:(j,i)\in\mathcal{E}^R\}$ is called the \emph{reinforcement mechanism} on $\GG^R$. Throughout this paper, we restrict our attention to \emph{affine reinforcement mechanisms}, that is,
\begin{equation} \label{eq:reinfocement_affine}
    R_{ji}(x)=\mu_{ji}x+c_{ji},
\end{equation}
where $\mu_{ji}\in[-1,1]$ and $c_{ji}\geq0$.
For pairs $(j,i)\notin\mathcal E^R$, we set
$\mu_{ji}=c_{ji}=0$ and $R_{ji}\equiv0$.
Thus sums involving these reinforcement coefficients or maps
may be taken over all vertices, with non-edges contributing zero.
We further assume that, for every vertex $i\in\VV$,\begin{equation} \label{reinforcement_assumption}
\sum_{j\in N(i)}c_{ji}
+ \sum_{\substack{j\in N(i)\\ \mu_{ji}<0}}\mu_{ji}
\geq0,
\qquad \sum_{j\in N(i)}c_{ji}
+ \sum_{\substack{j\in N(i)\\ \mu_{ji}>0}}\mu_{ji}
\leq 1.
\end{equation}
These conditions ensure that the total reinforcement received by a vertex remains uniformly bounded and that the state variables continue to evolve within the interval $[0,1]^N$.  

\ 

\noindent For each vertex \(i\in\VV\), define \begin{equation}
    \rho_i=\sum_{j\in N(i)}|\mu_{ji}|.
\end{equation}
We call $i$ \emph{subcritical} if \(\rho_i<1\), \emph{critical} if $\rho_i=1$, and \emph{positive-critical} if $\rho_i=1$ and $\mu_{ji}>0$ for every $j\in N(i)$.


\paragraph{Dynamics and Stochastic Approximation Representation} 
The state variables evolve according to the recursive scheme
\begin{equation}\label{eq:gen_recur}
Z_{t+1}(i) = Z_t(i)\left(1-\frac{1}{t+1}\right) +\frac{1}{t+1} \sum_{j\in N(i)} R_{ji}\!\big(\I_{t+1}(j)\big). 
\end{equation}
More generally, we consider the perturbed version
\begin{equation}\label{eq:gen_recur_perturb}
Z_{t+1}(i)
=
Z_t(i)+\frac1{t+1}\left[
\sum_{j\in N(i)}R_{ji}\bigl(\I_{t+1}(j)\bigr)
-Z_t(i)+\epsilon_{t+1}(i)
\right]. 
\end{equation}
Here $\epsilon_t$ is $\mathcal F_t$-measurable, uniformly bounded such that $Z_t(i) \in [0, 1]$ for all $i \in \VV$ and $t \geq 0$, and $\epsilon_t\to0$ almost surely.
 Conditioned on $\FF_t$, the sampling indicators at time $t+1$ are independent of each other. Substituting the affine reinforcement maps
\eqref{eq:reinfocement_affine} into \eqref{eq:gen_recur_perturb}, and decomposing the observations
into their conditional means and centered fluctuations,
gives
\begin{align}
Z_{t+1}(i)
&=
Z_t(i)+\frac1{t+1}\Bigg[
\sum_{j\in N(i)}
\left(\mu_{ji}\I_{t+1}(j)+c_{ji}\right)
-Z_t(i)+\epsilon_{t+1}(i)
\Bigg]
\label{eq:main_recur}\\
&=
Z_t(i)+\frac1{t+1}\Bigg[
\sum_{j\in N(i)}
\bigg\{
\mu_{ji}\bigg(
(2q_j-1)\sum_{k\in\VV}w_{kj}Z_t(k)+(1-q_j)
\bigg)+c_{ji}
\bigg\}
-Z_t(i)+\Delta M_{t+1}(i)+\epsilon_{t+1}(i)
\Bigg],
\nonumber
\end{align}
where
$
\Delta M_{t+1}(i)
=
\sum_{j\in N(i)}\mu_{ji}
\left(
\I_{t+1}(j)
-\mathbb E\big[\I_{t+1}(j)\mid\mathcal F_t \big]
\right)$
 is a martingale difference sequence. 
In vector form this can be written as follows. 
 \begin{equation}\label{eq:main_recur_vec}
    \Z_{t+1}=\Z_t+\frac{1}{t+1}\left[\Z_t(WQ\tilde{A}-I)+C + \Delta M_{t+1} +\epsilon_{t+1}\right],
\end{equation}
 where all vectors are understood as row vectors, $\Z_{t}  = (Z_t(i))_{i\in[N]}$, $\Delta M_{t}=(\Delta M_{t}(i))_{i\in[N]}$, $\epsilon_{t}=(\epsilon_{t}(i))_{i\in[N]}$, $Q=\operatorname{diag}(2q_1-1,\ldots,2q_N-1)$, and $\widetilde A=(\mu_{ji}\I_{\{j\in N(i)\}})_{j,i\in[N]}$. Finally, $C=(C_i)_{i\in[N]}$, where, for $i\in[N]$, $C_i=\sum_{j\in N(i)}\bigl(\mu_{ji}(1-q_j)+c_{ji} \bigr)$. 
 Equation \eqref{eq:main_recur_vec} is a stochastic approximation recursion whose drift is determined by the matrix $H=WQ\widetilde A-I.$ Note that the drift is not simply a linear function of the adjacency matrix of the underlying interaction graph; it is determined by a composition of sampling and reinforcement operators.
 
\paragraph{Overview and Motivation} 
One of the applications of the interacting stochastic process model of the type in \eqref{eq:main_recur_vec} is to opinion dynamics where the $Z_t(i) \in  [0, 1]$ represents the inclination of the individual $i$ at time $t$. A binary opinion can be obtained by thresholding as follows $O_t(i) = Sign \left( Z_t(i) - \frac{1}{2} \right)$ as in \cite{kaur2023interacting}. Another motivation is the study of binary social learning models on networks. Consider a population of $N$ agents connected by a directed social network. There is an unknown binary state of nature
$\Theta\in\{0,1\}$ representing, for instance, whether a circulating claim is true or false. At time $t$, agent $i$ maintains a confidence level
$Z_t(i)\in[0,1]$, interpreted as its belief that the claim is true. At each time step, every agent first consults its neighbours before deciding whether to publicly endorse the claim. Let $L_t(i)=\sum_{k\in\VV}w_{ki}Z_t(k)$ denote the weighted social evidence available to agent $i$. Suppose the endorsement indicator $I_{t+1}(i)\in\{0,1\}$
satisfies
\[
\mathbb{P}\!\left(I_{t+1}(i)=1\mid\mathcal{F}_t\right)
=
q_iL_t(i)
+
(1-q_i)\bigl(1-L_t(i)\bigr).
\]
The parameter $q_i\in[0,1]$ captures the behavioural tendency of agent $i$, with larger values corresponding to stronger conformity to the opinions of its neighbours and smaller values corresponding to increasingly contrarian behaviour. Unlike classical opinion dynamics models, here different neighbours need not have the same influence on an agent's future belief. Let $\mu_{ji}$ denote the influence exerted by agent $j$ on agent $i$, and let $c_{ji}$ represent a baseline reinforcement associated with this interaction. The total reinforcement received by agent $i$ at time $t+1$ is then given by
\[
S_{t+1}(i)
=
\sum_{j\in N(i)}
\left(\mu_{ji}I_{t+1}(j)+c_{ji}\right).
\]
Finally, agent $i$ updates its belief by forming the running empirical average
\[
Z_{t+1}(i)
=
\frac{t}{t+1}Z_t(i)
+
\frac{1}{t+1}S_{t+1}(i).
\]
Substituting the expression for $S_{t+1}(i)$ yields the reinforcement recursion \eqref{eq:gen_recur}. Thus, binary social learning with heterogeneous trust and confirmation bias is naturally described by the interacting reinforced stochastic framework introduced in this paper. Several other stochastic systems with correlated sub-parts behave in the same way. In section~\ref{sec:App} we discuss in detail the application of our model of interacting processes and the corresponding asymptotic results to certain interacting urn processes and interacting random walks with memory on graphs.

The framework extends existing interacting reinforced systems in two fundamental directions. First, reinforcement is assigned to edges rather than vertices, allowing different interactions to induce different reinforcement effects. Second, sampling and reinforcement are represented by distinct interaction networks, thereby decoupling information acquisition from reinforcement. As a consequence, the dynamics depend jointly on the sampling graph, the reinforcement graph, and the associated edge parameters, yielding a substantially richer class of interacting stochastic systems than those arising from a single interaction matrix. The examples discussed above and those presented in section~\ref{sec:App} illustrate how a variety of reinforced processes can be naturally embedded within this framework.

\section{Deterministic Limit Regime}\label{sec:convergence_deter}
\subsection{Convergence via Stochastic Approximation}
\paragraph{Stochastic Approximation Representation}  The recursion \eqref{eq:main_recur_vec} is of the standard stochastic approximation form, with the sampling and reinforcement mechanisms jointly determining the mean-field dynamics. Consequently, the asymptotic behaviour of the system can be analysed using stochastic approximation techniques under suitable assumptions on the drift, martingale noise and remainder terms. The asymptotic behaviour of the process is therefore determined by the matrix \[ H=WQ\widetilde A-I, \] whose structure reflects the combined effects of the sampling and reinforcement mechanisms.
\begin{theorem}[Almost sure convergence of $\mathbf Z_t$]
\label{thm:deterministic_limit}
Let $H=WQ\widetilde A-I$. If $H$ is invertible, then, as
$t\to\infty$, the process $\mathbf Z_t$ converges
almost surely to the deterministic limit
\begin{equation} \label{deter_limit}
\bZ^*=-C(WQ\widetilde A-I)^{-1}.
\end{equation}
\end{theorem}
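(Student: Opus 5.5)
The plan is to read \eqref{eq:main_recur_vec} as a standard Robbins--Monro recursion with step size $\gamma_{t+1}=1/(t+1)$, mean field $h(\mathbf z)=\mathbf z H + C$, bounded martingale noise $\Delta M_{t+1}$ and vanishing perturbation $\epsilon_{t+1}$. I would then invoke the ODE method (Benaïm's theorem that the limit set of a bounded stochastic approximation is internally chain transitive for the flow, or the Kushner--Yin convergence theorem). Once the linear ODE $\dot{\mathbf z}=\mathbf z H+C$ is shown to be globally asymptotically stable, its unique equilibrium is the only internally chain transitive set. That equilibrium solves $\mathbf z^*H=-C$, which is exactly \eqref{deter_limit}.

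\textbf{Step 1: spectral bound.} I first show that $\rho(WQ\widetilde A)\le 1$. Assumption \eqref{reinforcement_assumption} gives two inequalities:
\[
\sum_{\mu_{ji}>0}\mu_{ji}\le 1-\sum_j c_{ji},\qquad -\sum_{\mu_{ji}<0}\mu_{ji}\le \sum_j c_{ji}.
\]
Adding them yields $\rho_i\le 1$ for every $i$. Next I bound the action $\mathbf z\mapsto \mathbf zM$ with $M=WQ\widetilde A$ in the $\ell^\infty$ norm on row vectors. The induced norm is $\max_i\sum_k|M_{ki}|$. Using column-stochasticity of $W$ and $|2q_j-1|\le 1$,
\[
\sum_k|M_{ki}|\le\sum_j |2q_j-1|\,|\mu_{ji}|\sum_k w_{kj}\le \rho_i\le 1.
\]
Hence every eigenvalue $\lambda$ of $M$ satisfies $|\lambda|\le1$, so every eigenvalue $\lambda-1$ of $H$ has $\operatorname{Re}(\lambda-1)\le 0$. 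Equality forces $\operatorname{Re}\lambda=1$, and together with $|\lambda|\le 1$ this gives $\lambda=1$. That case would make $0$ an eigenvalue of $H$, which is excluded by invertibility. Therefore $H$ is Hurwitz.

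\textbf{Step 2: verify the stochastic approximation hypotheses.}
\begin{itemize}
\item The iterates stay in the compact set $[0,1]^N$, by the standing assumptions on $\epsilon_t$ and on \eqref{reinforcement_assumption}.
\item $\Delta M_{t+1}$ is a martingale difference sequence bounded by $\max_i\rho_i\le 1$. Since $\sum_t (t+1)^{-2}<\infty$, the martingale $\sum_t \Delta M_{t+1}/(t+1)$ converges a.s. by the $L^2$ martingale convergence theorem.
\item $\epsilon_t\to0$ a.s.
\item The step sizes satisfy $\sum\gamma_t=\infty$ and $\sum \gamma_t^2<\infty$.
\item $h$ is affine, hence Lipschitz.
\end{itemize}
So the interpolated process is an asymptotic pseudotrajectory of the ODE. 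Since $H$ is Hurwitz, $V(\mathbf z)=(\mathbf z-\mathbf z^*)P(\mathbf z-\mathbf z^*)^\top$ is a strict global Lyapunov function, where $P$ solves the Lyapunov equation for $H$. Hence $\{\mathbf z^*\}$ is the unique internally chain transitive set, and $\mathbf Z_t\to \mathbf Z^*$ almost surely.

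The main obstacle is Step 1. Invertibility of $H$ alone does not give stability in general. The argument hinges on the norm bound placing the spectrum of $WQ\widetilde A$ in the closed unit disk, which touches the line $\operatorname{Re}=1$ only at $\lambda=1$. The rest is routine application of standard stochastic approximation results.
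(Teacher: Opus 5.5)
Your proposal is correct and follows essentially the same route as the paper. The max-column-sum bound $\|WQ\widetilde A\|_1\le 1$, which you verify entrywise rather than via submultiplicativity, places the spectrum in the closed unit disk; invertibility of $H$ then excludes $\lambda=1$, so $H$ is Hurwitz, and a standard ODE-method theorem gives convergence to $-CH^{-1}$. The only difference is cosmetic: the paper invokes Borkar's criterion (Lemma~\ref{lem:sas}), while you use Bena\"{\i}m's chain-transitivity argument with a quadratic Lyapunov function.
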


\begin{proof}
The reinforcement assumptions imply
$\sum_j|\widetilde A_{ji}|\leq1$ for every $i$.
Hence, in the induced matrix $1$-norm,
\[
\|WQ\widetilde A\|_1
\leq
\|W\|_1\|Q\|_1\|\widetilde A\|_1
\leq1.
\]
Every eigenvalue $\zeta$ of $WQ\widetilde A$ therefore
satisfies $|\zeta|\leq1$. Since
$H=WQ\widetilde A-I$ is invertible, $\zeta\ne1$,
and consequently $\operatorname{Re}(\zeta-1)<0$.
Thus all eigenvalues of $H$ have strictly negative
real parts. The associated ODE
\[
\dot z=zH+C
\]
has the unique globally asymptotically stable equilibrium
$-CH^{-1}$. In \eqref{eq:main_recur_vec}, the state
variables and martingale differences are bounded,
the step sizes are $1/(t+1)$, and the remainder tends
to zero almost surely. Lemma~\ref{lem:sas} therefore
gives the claimed convergence.
The same argument applies when the recursion is
initialized at any fixed finite time.
\end{proof}


Theorem~\ref{thm:deterministic_limit} reduces the problem
of deterministic convergence to the invertibility of
the drift matrix $H$, without requiring the sampling
and reinforcement graphs to coincide.
To obtain graph-theoretic sufficient conditions for
invertibility, we impose the following additional
standing assumption for the remainder of this section.
\begin{itemize}
\item[{\bf (A1)}]
The sampling and reinforcement graphs have the same
underlying directed edge set, denoted by $\mathcal E$.
We write $\mathcal G=(\VV,\mathcal E)$ for the resulting
graph and assume that $\mathcal G$ is strongly connected.
\end{itemize}
Under this standing assumption, we obtain graph-theoretic
conditions ensuring that $H$ is invertible.

We introduce some notation that will be used throughout. For vertices $i,j\in\VV$, we write $i\to j$ if $(i,j)\in\mathcal E$. We write $i\rightsquigarrow j$ if there exists a directed path from $i$ to $j$ in $\mathcal G$, that is, if there exist vertices $i_1,\ldots,i_k\in\VV$ such that $i\to i_1\to\cdots\to i_k\to j.$ Given a directed path $\ell$ from $i$ to $j$, let $d_\ell(i\rightsquigarrow j)$ denote its length, namely the number of edges in the path. For a vertex $a\in\VV$ and an integer $n\geq 1$, define \[ N_a^{(n)} = \Big\{ v\in\VV: d_\ell(v\rightsquigarrow a)=n \text{ for some directed path } \ell \Big\}. \] Thus, $N_a^{(n)}$ consists of all vertices that can reach $a$ through a directed path of length $n$. In the definition of $N_a^{(n)}$, repeated vertices are allowed; thus the directed paths considered there are directed walks.

\begin{theorem}[Invertibility via mixed sampling or subcritical reinforcement] \label{thm:invertibility_1}
    Assume that, for every critical vertex $i$,
$\mu_{ji}\ne0$ for every $j\in N(i)$.
 Then $H$ is invertible if either
of the following conditions holds:
    \begin{enumerate}[(i)]
        \item there exists a mixed-sampling vertex and either $\mathcal G$ contains an odd cycle or two mixed-sampling vertices are connected by a directed path of odd length;
        \item there exists a subcritical vertex and either $\mathcal G$ contains an odd cycle or two subcritical vertices are connected by a directed path of odd length.
    \end{enumerate}
Consequently, by Theorem~\ref{thm:deterministic_limit}, \[ \Z_t \xrightarrow{\mathrm{a.s.}} -CH^{-1}. \] 
\end{theorem}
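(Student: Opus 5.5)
The plan is to show that $1$ is not an eigenvalue of $B=WQ\widetilde A$, so that $H=B-I$ is invertible, and then to quote Theorem~\ref{thm:deterministic_limit}. Suppose instead that $\mathbf z B=\mathbf z$ for some nonzero row vector $\mathbf z$. Write $y_j=\sum_k w_{kj}z_k$ and $u_j=(2q_j-1)y_j$. The eigen-relation then reads $z_i=\sum_{j\in N(i)}\mu_{ji}u_j$.

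I will work with the sup-norm, because the row-vector action $\mathbf z\mapsto\mathbf z\widetilde A$ is controlled by column sums of $\widetilde A$. Let $m=\max_k|z_k|>0$. Since $W$ is column-stochastic, $|y_j|\le m$ and $|u_j|\le|2q_j-1|\,m\le m$. Hence, for every $i$,
\[
|z_i|\le\sum_{j\in N(i)}|\mu_{ji}||u_j|\le\rho_i m\le m .
\]

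Now fix a maximizer $i$, i.e. $|z_i|=m$, and read off what equality forces. First, $\rho_i=1$, so $i$ is critical. By the hypothesis on critical vertices, $\mu_{ji}\neq0$ for every $j\in N(i)$. Equality then forces $|u_j|=m$ for each such $j$. This gives $|2q_j-1|=1$, so $j$ is not mixed. It also gives $|y_j|=m$. Because $w_{kj}>0$ exactly on the in-neighbours of $j$ (this is where (A1) is used), $|y_j|=m$ forces $|z_k|=m$ for every $k\in N(j)$.

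Let $S$ be the set of maximizers. The argument shows that $S$ is closed under taking in-neighbours of in-neighbours. Every vertex of $S$ is critical, and every in-neighbour of a vertex of $S$ is non-mixed. Starting from one maximizer $i$, $S$ contains every vertex having a directed walk of even length to $i$. By strong connectivity, every vertex has some walk to $i$. Let $T$ be the set of vertices having a walk of odd length to $i$. Every such vertex is the in-neighbour of a vertex lying on an even walk, so all vertices of $T$ are non-mixed. Also $\VV=S\cup T$.

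\emph{Odd cycle case.} Strong connectivity lets us splice the odd cycle into any walk to $i$, which flips its parity. So every vertex lies in both $S$ and $T$. Thus every vertex is critical and non-mixed, contradicting the existence of a mixed or subcritical vertex.

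\emph{Odd path case.} A mixed vertex cannot lie in $T$, so it lies in $S$. A subcritical vertex cannot lie in $S$, so it lies in $T$.
\begin{itemize}
\item In (i), let $a\rightsquigarrow b$ be an odd path between mixed vertices. Then $b\in S$ gives an even walk from $b$ to $i$. Concatenating, $a$ has an odd walk to $i$, so $a\in T$, a contradiction.
\item In (ii), the subcritical vertices $a,b$ lie in $T$. Then $b$ has an odd walk to $i$, so $a$ has an even walk to $i$ and $a\in S$, a contradiction.
\end{itemize}
In every case $1$ is not an eigenvalue of $B$, so $H$ is invertible, and Theorem~\ref{thm:deterministic_limit} gives the almost sure convergence of $\mathbf Z_t$ to $-CH^{-1}$.

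The main obstacle is the propagation step. One must check carefully that equality in each triangle inequality really requires every in-neighbour to be used. This is exactly why the hypothesis $\mu_{ji}\neq0$ at critical vertices is imposed, and why $W$ must share the edge set of $\widetilde A$. The parity bookkeeping over walks, rather than simple paths, also needs care.
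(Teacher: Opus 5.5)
Your proof is correct and follows essentially the same route as the paper's: you take a maximum-modulus fixed vector of $WQ\widetilde A$, use equality in the triangle inequality to force the maximizer to be critical with non-mixed in-neighbours, propagate maximality through two-step in-neighbourhoods, and finish with a parity argument on walks to the maximizer; you also make explicit the uses of $\mu_{ji}\neq0$ at critical vertices and of (A1) that the paper leaves implicit. One wording slip to fix: in the odd-path case, $b$ has an even walk to $i$ because $b\notin T$ and strong connectivity gives some walk from $b$ to $i$, not because $b\in S$, since you defined $S$ as the set of all maximizers, which may be strictly larger than the set of vertices with an even walk to $i$.
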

\begin{proof}
    Suppose, for contradiction, that $H$ is singular. Since $H$ is a real matrix, there exists a nonzero real row vector $v=(v_1,\ldots,v_N) $  such that $vH=0$, i.e. $v=vWQ\widetilde A.$ Hence, for each $k\in\VV$, 
    \begin{equation} \label{eq:nullvector} 
    v_k = \sum_{r\in\VV} (2q_r-1)\mu_{rk} \sum_{j=1}^{N} w_{jr}v_j . 
    \end{equation} 
    Choose $l\in\VV$ such that $|v_l|=\max_{i\in\VV}|v_i|,$ and define $\widetilde v=\frac{v}{|v_l|}.$ Then 
    \[ |\widetilde v_l|=1, \qquad |\widetilde v_i|\le 1, \quad i\in\VV . \]
    Applying \eqref{eq:nullvector} with $k=l$ gives
    \[ 1 = |\widetilde v_l| = \left| \sum_{r\in\VV} (2q_r-1)\mu_{rl} \sum_{j=1}^{N} w_{jr}\widetilde v_j \right|. \]
    Using the triangle inequality, $1 \le \sum_{r\in\VV} |2q_r-1|\,|\mu_{rl}| \sum_{j=1}^{N} w_{jr}|\widetilde v_j|.$
    Since $W$ is column-stochastic and $|\widetilde v_j|\le 1$ for all $j$, 
    \[ 1 \le \sum_{r\in\VV} |2q_r-1|\,|\mu_{rl}| \le \sum_{r\in\VV}|\mu_{rl}| \le 1, 
    \] where the last inequality follows from \eqref{reinforcement_assumption}. Therefore equality must hold throughout. Consequently,
    \begin{enumerate}
    \item every in-neighbour $r$ of $l$ satisfies $|2q_r-1|=1$, i.e. $r$ is not mixed-sampling;
    \item the vertex $l$ is critical, i.e. $\sum_{r\in\VV}|\mu_{rl}|=1$; 
    \item $\sum_{j=1}^{N}w_{jr}|\widetilde v_j|=1$ for every $r\to l$. 
    \end{enumerate}
    Since the weights in each column of $W$ are nonnegative and sum to one, the third condition implies that $|\widetilde v_j|=1$ for every $j$ with $w_{jr}>0$ and every $r\to l$. Hence 
    \[ |\widetilde v_j|=1, \qquad j\in N_l^{(2)}. \]
    Repeating the same argument at any vertex $x$ satisfying $|\widetilde v_x|=1$, we obtain inductively that 
    \[ |\widetilde v_j|=1, \qquad j\in N_l^{(2m)}, \quad m\ge 1. \]
    Moreover, for every $m\ge0$, 
    \begin{itemize} 
    \item every vertex in $N_l^{(2m+1)}$ is not \textit{mixed-sampling},
    \item every vertex in $N_l^{(2m)}$ is \textit{critical}.
    \end{itemize}
    Thus, the existence of a nonzero null vector forces all vertices in odd layers to satisfy $q_j\in\{0,1\}$ and all vertices in even layers to be critical.

    We now derive a contradiction. Suppose condition {\rm (i)} holds. First assume that $\mathcal G$ contains an odd cycle and that there exists a vertex $r^*$ with $|2q_{r^*}-1|<1$.  Since $\mathcal G$ is strongly connected, there exists a directed path from $r^*$ to $l$. Combining this path with the odd cycle, if necessary, yields a directed path from $r^*$ to $l$ of odd length. Hence $r^*\in N_l^{(2m+1)}$ for some $m\ge0$, contradicting the fact that every vertex in an odd layer must satisfy $|2q_j-1|=1$.

    Now, suppose there exist two mixed-sampling vertices $r^*$ and $r'$ connected by a directed path of odd length, i.e.  $r^*\rightsquigarrow r'$ (or $r'\rightsquigarrow r^*$) via a path of odd length. Then relative to $l$, at least one of the vertices $r^*$ and $r'$ must belong to an odd layer, giving the same contradiction. 

Next assume that condition {\rm (ii)} holds. The argument is analogous. If $\mathcal G$ contains an odd cycle and there exists a subcritical vertex $v$, then there exists an even-length directed walk from $v$ to $l$. Thus $v\in N_l^{(2m)}$ for some $m\ge0$, contradicting the fact that every even-layer vertex is critical.
    
Similarly, if two subcritical vertices are connected by a directed path of odd length, then relative to $l$ at least one of them must belong to an even layer, again contradicting the requirement that every even-layer vertex be critical.  Therefore $H$ is invertible. The convergence statement follows from Theorem~\ref{thm:deterministic_limit}. 
\end{proof}

\subsection{A Graph Partition Characterization}
Throughout this subsection, we continue to assume {\bf (A1)}.
We now address the regime in which every vertex is positive-critical. The invertibility is determined by parity structure of the interaction graph. To characterize this structure, we introduce a graph partition algorithm (GPA), which generalizes the partition procedure proposed in \cite{kaur2025interacting}. 
We show that the invertibility of the matrix $WQ\tilde{A}-I$ is closely related to whether the given graph admits a partition via that algorithm. This builds on the algorithm proposed in \cite{kaur2025interacting}, the present formulation applies to the broader class of interacting reinforced stochastic systems with a more general drift matrix. 
Before we describe the algorithm, for every \(j\in\VV\), define an auxiliary
graph \(G_j=(\VV_j,\mathcal E_j)\), where $\VV_j=\bigcup_{k\geq 0}N_j^{(2k)}$.
For \(v_1,v_2\in\VV_j\), \((v_1,v_2)\in\mathcal E_j\) if there exists
\(w\in\VV\) such that \(v_1\to w\to v_2\) in the original graph.

\begin{figure}[h]
\centering
\begin{tikzpicture}[
    main/.style = {draw, circle, minimum size=1cm, font=\large, thick, text=white},
    midpoint/.style={circle, draw, minimum size=0.5cm, inner sep=0pt, thick},
    node distance=4cm
]
    \node[main, fill=red!75] (D1) at (0,4) {$D_1$}; 
    \node[main, fill=orange] (D2) at (4,4) {$D_2$};
    \node[main, fill=blue!70] (P1) at (0,0) {$P_1$}; 
    \node[main, fill=green!60!black] (P2) at (4,0) {$P_2$}; 

    \node[midpoint] (M_P1D2) at (2.5, 1.5) {}; 
    \draw[->, thick] (P1) to [bend left=20] (M_P1D2);
    \draw[->, thick] (M_P1D2) to [bend left=20] (D2);

    \node[midpoint] (M_P2D1) at (1.5, 2.5) {}; 
    \draw[->, thick] (P2) to [bend left=20] (M_P2D1);
    \draw[->, thick] (M_P2D1) to [bend left=20] (D1);

    \node[midpoint] (M_P1L) at (-1.5, 0) {};
    \draw[->, thick] (P1) to [bend right=45] (M_P1L);
    \draw[->, thick] (M_P1L) to [bend right=45] (P1);

    \node[midpoint] (M_P2L) at (5.5, 0) {};
    \draw[->, thick] (P2) to [bend left=45] (M_P2L);
    \draw[->, thick] (M_P2L) to [bend left=45] (P2);

    \node[midpoint] (M_D2D1) at (2, 4.7) {};
    \draw[->, thick] (D2) to [bend right=20] (M_D2D1);
    \draw[->, thick] (M_D2D1) to [bend right=20] (D1);

    \node[midpoint] (M_D1D2) at (2, 3.3) {};
    \draw[->, thick] (D1) to [bend right=20] (M_D1D2);
    \draw[->, thick] (M_D1D2) to [bend right=20] (D2);

    \node[midpoint] (M_D1P1) at (0,2) {};
    \draw[->, thick] (D1) -- (M_D1P1);
    \draw[->, thick] (M_D1P1) -- (P1);

    \node[midpoint] (M_D2P2) at (4,2) {};
    \draw[->, thick] (D2) -- (M_D2P2);
    \draw[->, thick] (M_D2P2) -- (P2);

\end{tikzpicture}
\caption[Two-step exploration rules in the GPA.]{The figure describes the two-step exploration rules used in Algorithm~\ref{alg:graph_partition}. The arrows indicate how vertices reached through two-step paths are assigned among the four sets $P_1,P_2,D_1,D_2$, depending on whether they are preferential or de-preferential.} 
\label{fig:gpa_partition}
\end{figure}

\begin{algorithm}[H]
 \caption{Graph Exploration Process} \label{alg:graph_partition}
 \textbf{Input}: A directed graph $\GG(V, E)$, sets of preferential nodes  $\PP$ and de-preferential nodes $\DD$ and a node $j \in \PP \cup \DD$. \\
\mbox{\textbf{Output}: Whether starting from $j$, $\GG$ admits a partition or not.}
 \begin{algorithmic}[1] 
\IF {$j\in \PP$}
\STATE Initialize $P_1 \gets \{j\}$, $P_2 = D_1=D_2 = \emptyset$.
\ELSE 
\STATE Initialize $D_1 \gets \{j\}$, $P_1 = P_2 =D_2 = \emptyset$.
\ENDIF
 \WHILE{$(P_1\cup P_2\cup D_1\cup D_2)  \subsetneq V$}
 \STATE  $P_1 \gets P_1 \cup  \left( \cup_{j\in P_1} N^{(2)}_j  \cap \PP \right)$ and  $D_1 \gets D_1 \cup  \left( \cup_{j\in P_1} N^{(2)}_j  \cap \DD \right)$
  \STATE  $P_2 \gets P_2 \cup  \left( \cup_{j\in D_1} N^{(2)}_j  \cap \PP \right)$ and $D_2 \gets D_2 \cup  \left( \cup_{j\in D_1} N^{(2)}_j  \cap \DD \right)$
  \STATE  $P_1 \gets P_1 \cup  \left( \cup_{j\in D_2} N^{(2)}_j  \cap \PP \right)$ and  $D_1 \gets D_1 \cup  \left( \cup_{j\in D_2} N^{(2)}_j  \cap \DD \right)$
  \STATE  $P_2 \gets P_2 \cup  \left( \cup_{j\in P_2} N^{(2)}_j  \cap \PP \right)$ and  $D_2 \gets D_2 \cup  \left( \cup_{j\in P_2} N^{(2)}_j  \cap \DD \right)$
    \IF {$P_1, P_2, D_1, D_2$ are not mutually disjoint} 
 \STATE {\bf BREAK} and \textbf{return} \lq\lq$\GG_j$ does not admit a graph partition."  
 \ENDIF
 \ENDWHILE
 \STATE  Repeat Steps 8--11 once.
\IF {any node is re-assigned from $P_1$ to $P_2$ (or vice versa) or from $D_1$ to $D_2$ (or vice versa)} 

 \STATE \textbf{return} \lq\lq$\GG_j$ does not admit a graph partition"
 \ELSE
 \STATE \textbf{return} \lq\lq$\GG_j$ admits a graph partition $G^{(2)}_j(P_1, P_2, D_1, D_2)$." 
 \ENDIF
  \end{algorithmic}
 \end{algorithm}

Note that the graph partition algorithm in \cite{kaur2025interacting} is recovered by replacing the two-step neighbourhood $N_j^{(2)}$ by the one-step in-neighbourhood $N(j)$. In that setting, the outcome of the algorithm is independent of the initial vertex $j$, up to a permutation of the four sets $P_1,P_2,D_1,D_2$. In the present model, however, the algorithm explores the graph through two-step neighbourhoods, and hence the outcome may depend on the initial vertex. In particular, starting from a given vertex, the algorithm may fail to reach all preferential and de-preferential nodes, even when the underlying graph is strongly connected. For this reason, the sets $P_1,P_2,D_1,D_2$ should be viewed as depending on the initial vertex, although we suppress this dependence in the notation. Finally, if a partition is obtained, the labels $1$ and $2$ are immaterial: simultaneously exchanging $P_1$ with $P_2$ and $D_1$ with $D_2$ gives the same partition structure.

The graph partition algorithm can be thought of as a graph exploration process. Starting from a vertex $j$ at time 0, the process moves at each subsequent step to a vertex at directed distance two and assigns that vertex to one of the four classes. 
\begin{theorem} \label{thm:GPA}
Suppose that $\GG$ contains an odd cycle.
Assume further that all vertices are positive-critical
and that $q_v\in\{0,1\}$ for every $v\in\VV$.
Then $WQ\widetilde A-I$ is invertible if and only if
$\GG$ does not admit a partition under the GPA.
\end{theorem}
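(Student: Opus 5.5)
Under the hypotheses, $Q=\operatorname{diag}(\pm1)$ with $+1$ on $\PP$ and $-1$ on $\DD$. Positive-criticality gives $\mu_{ji}>0$ and $\sum_j\mu_{ji}=1$, so $\widetilde A$ is column-stochastic. The drift matrix is $M:=WQ\widetilde A$. Singularity of $H$ is equivalent to the existence of a nonzero real $v$ with $v=vM$, that is,
\[
v_k=\sum_{r}(2q_r-1)\mu_{rk}\sum_j w_{jr}v_j .
\]
I prove both directions through this fixed-point equation, reusing the layer argument from the proof of Theorem~\ref{thm:invertibility_1}.

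\textbf{Singular $\Rightarrow$ partition.} Normalize $v$ so that $\max_i|v_i|=1$ and pick $l$ with $|v_l|=1$. The equality case in that proof forces $|v_j|=1$ on every even layer $N_l^{(2m)}$. Equality in the triangle inequality forces more: every term $(2q_r-1)\sum_j w_{jr}v_j$, for $r\to l$, must have the sign of $v_l$, and all $v_j$ with $w_{jr}>0$ must equal a common value $\pm1$.

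Since $\GG$ has an odd cycle and is strongly connected, every vertex lies in some even layer of $l$. Hence $|v|\equiv1$, so $v\in\{\pm1\}^N$.

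Now attach a label to each vertex $j$ by setting
\[
s_j=v_j\cdot(\text{sign of its own sampling type}).
\]
Assign $j$ to $P_1$ or $D_1$ according to its type when $v_j=v_l$, and to $P_2$ or $D_2$ when $v_j=-v_l$; the reference sign depends on whether $l\in\PP$. Then check, rule by rule against Figure~\ref{fig:gpa_partition}, that every two-step move $x\to r\to y$ prescribed by the algorithm lands in the class this labeling predicts. The point is that $v_y=(2q_r-1)v_x$ is forced through the middle vertex $r$. So the algorithm started at $l$ never produces overlapping sets and never reassigns a node, and $\GG$ admits a partition.

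Care is needed here: the GPA's update rules are phrased in terms of the class of the \emph{source} vertex. I must verify that the intermediate vertex's type, which determines the sign flip, is consistent with the four rules drawn. I expect this bookkeeping to be the main obstacle, together with the fact that the algorithm's output may depend on the starting vertex. I handle the latter by starting the GPA at $l$ and using the odd cycle so that $\VV_l=\VV$.

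\textbf{Partition $\Rightarrow$ singular.} Given a partition $P_1,P_2,D_1,D_2$ produced from some start, define $v_j=+1$ on $P_1\cup D_1$ and $v_j=-1$ on $P_2\cup D_2$, adjusting by type if the bookkeeping above requires it. Because $\GG$ has an odd cycle, the partition covers all of $\VV$. By construction, for every edge pair $j\to r\to k$ we have $v_k=(2q_r-1)v_j$.

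Then, using column-stochasticity of $W$ and $\widetilde A$,
\[
(vM)_k=\sum_r\mu_{rk}(2q_r-1)\sum_j w_{jr}v_j=\sum_r\mu_{rk}v_k=v_k .
\]
Here all $v_j$ with $w_{jr}>0$ equal $(2q_r-1)v_k$, because the sampling and reinforcement edge sets coincide by (A1). So $v$ is a nonzero null vector of $H$, and $H$ is singular.

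Both directions hinge on the same correspondence: a sign vector $v$ satisfies $vM=v$ if and only if
\[
v_k=(2q_r-1)v_j\quad\text{for all } j\to r\to k ,
\]
and this is exactly the consistency condition the GPA checks. I will state it as a lemma first.
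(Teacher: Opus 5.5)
Your maximum-modulus derivation for $v=vWQ\widetilde A$ is correct. However, the second half of the lemma you plan to state first is false, and the gap is exactly the step you deferred as bookkeeping.

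\textbf{The two sign rules differ.} A fixed vector of $WQ\widetilde A$ satisfies $v_k=(2q_r-1)v_j$ along $j\to r\to k$, so the flip is governed by the \emph{middle} vertex. The GPA classifies a new vertex $i\in N_k^{(2)}$ using the class of $k$ and the type of $i$ itself. With $\sigma=+1$ on $P_1\cup D_2$ and $\sigma=-1$ on $P_2\cup D_1$, its rules say $\sigma_k=(2q_i-1)\sigma_i$ along $i\to r\to k$, so the flip is governed by the \emph{source} vertex. No pointwise relabeling converts one rule into the other.

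\textbf{A counterexample.} Take the directed $3$-cycle $1\to2\to3\to1$ with $q_1=q_2=0$ and $q_3=1$; it is singular by the odd-cycle corollary. Every fixed vector of $WQ\widetilde A$ is a multiple of $v=(1,1,-1)$, so the two de-preferential vertices share a sign. Yet the GPA, from any start, puts them in different classes $D_1$ and $D_2$. Hence no assignment of classes by type and $v_j$ can reproduce the partition. Conversely, take the partition $P_1=\{3\}$, $D_1=\{1\}$, $D_2=\{2\}$. Your encoding ($+1$ on $P_1\cup D_1$) gives $(1,-1,1)$, and its type-adjusted version gives $(-1,1,1)$. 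Both have $v_1=-v_2$, so neither is fixed by $WQ\widetilde A$. Both directions therefore fail as written.

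\textbf{The missing idea} is to pass between $WQ\widetilde A$ and $Q\widetilde AW$, whose fixed vectors obey the source-vertex rule; indeed $(-1,1,1)$ above is fixed by $Q\widetilde AW$.

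For singular $\Rightarrow$ partition, set $u=vW$. Then $u\neq0$, since otherwise $v=vWQ\widetilde A=0$. Moreover $u_k=\sum_r w_{rk}\sum_i(2q_i-1)\mu_{ir}u_i$, with nonnegative weights $w_{rk}\mu_{ir}$ summing to $1$. Your maximum-modulus argument applied to $u$ then yields $|u|\equiv1$ and $u_k=(2q_i-1)u_i$ along every $i\to r\to k$. This is precisely the GPA rule, so the GPA reproduces the sign classes of $u$ within $\mathcal P$ and $\mathcal D$.

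For partition $\Rightarrow$ singular, your type-adjusted vector is exactly $\sigma$, which satisfies $\sigma Q\widetilde AW=\sigma$. Then $v:=\sigma Q\widetilde A$ is fixed by $WQ\widetilde A$, and it is nonzero because $vW=\sigma$. Alternatively, invoke $\det(WQ\widetilde A-I)=\det(Q\widetilde AW-I)$. This is the route the paper takes.

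In your notation, the correct dictionary is the one-step shift $\sigma_k=v_r$ for any in-neighbour $r$ of $k$, i.e.\ $\sigma=vW$. This is well defined because your relation forces all in-neighbours of a vertex to share a sign.
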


\begin{proof}
All vertices are positive-critical and all nodes are either preferential or de-preferential, 
we have $\sum_{j\in\VV}\mu_{jr}=1$  and $2q_r-1\in\{-1,1\}$ for each $r \in [N]$. First suppose that $\GG$ admits a partition under the GPA. Define a sign vector
$\sigma=(\sigma_i)_{i\in[N]}$ by
\[
\sigma_i=
\begin{cases}
1, & \text{if } i\in P_1\cup D_2,\\
-1, & \text{if } i\in P_2\cup D_1.
\end{cases}
\]

The pattern of the GPA partition shown in Figure~\ref{fig:gpa_partition}
implies that for every directed two-step path $i\to r\to k$,
$\sigma_k=(2q_i-1)\sigma_i$. Indeed, if $i\in\PP$, then $2q_i-1=1$, and the GPA places $i$ and $k$ in
classes carrying the same  sign. If $i\in\DD$, then $2q_i-1=-1$,
and the GPA places $i$ and $k$ in classes carrying opposite  signs. Using this, we get that for every $k\in[N]$,
\begin{eqnarray*} (\sigma Q\widetilde A W)_k &=& \sum_{r\in\VV} w_{rk} \sum_{i\in\VV}(2q_i-1)\mu_{ir}\sigma_i= \sum_{r\in\VV} w_{rk} \sum_{i\in\VV}\mu_{ir}\sigma_k\\ &=& \sigma_k\sum_{r\in\VV} w_{rk} \sum_{i\in\VV}\mu_{ir}= \sigma_k. 
\end{eqnarray*}

\noindent 
Thus, $\sigma Q\widetilde A W=\sigma$. Set $v=\sigma Q\widetilde A$. Then
$v\neq0$, since $vW=\sigma Q\widetilde A W=\sigma\neq0$. Moreover, $vWQ\widetilde A
=
\sigma Q\widetilde A WQ\widetilde A
=
\sigma Q\widetilde A
=
v$.
Therefore $v(WQ\widetilde A-I)=0$, and so $WQ\widetilde A-I$ is singular.

To prove the converse, now suppose that $WQ\widetilde A-I$ is singular. Since this is a real matrix, there exists a non-zero real row vector $v=(v_1,\ldots,v_N)$
such that $v(WQ\widetilde A-I)=0$. Let $u=vW$. Then $u\neq0$, because otherwise
$v=vWQ\widetilde A=0$, which contradicts the choice of $v$. Also,
$uQ\widetilde A W
=
vWQ\widetilde A W
=
vW
=
u$.

Choose $l\in[N]$ such that $|u_l|=\max_{i\in[N]}|u_i|$, and normalize so that
$|u_l|=1$. Then $|u_i|\leq1$ for all $i\in[N]$. From $uQ\widetilde A W=u$,
the $k$-th coordinate satisfies
\[
u_k = \sum_{r\in\VV}w_{rk}
\sum_{i\in\VV}(2q_i-1)\mu_{ir}u_i.
\]
Now, by the triangle inequality,
\begin{align*}
|u_k|
=
\left|
\sum_{r\in\VV}w_{rk}
\sum_{i\in\VV}(2q_i-1)\mu_{ir}u_i
\right| &\leq
\sum_{r\in\VV}w_{rk}
\sum_{i\in\VV}\mu_{ir}|2q_i-1|\,|u_i|\\
&\leq
\sum_{r\in\VV}w_{rk}
\sum_{i\in\VV}\mu_{ir} =1.
\end{align*}
Now suppose that $|u_k|=1$. Since the coefficients
$w_{rk}\mu_{ir}$ are non-negative and 
$\sum_{r\in\VV}w_{rk}
\sum_{i\in\VV}\mu_{ir} = 1,$
we have that, for every directed two-step path $i\to r\to k$, $|u_i|=1$, and all the terms $(2q_i-1)u_i$ have the same sign as $u_k$. Therefore, $(2q_i-1)u_i=u_k$ for every directed two-step path $i\to r\to k$.

Starting from $l$ and repeating this argument along two-step paths, we get
$|u_i|=1$ for every vertex reached by the two-step exploration from $l$. Since
$\GG$ is strongly connected and contains an odd cycle, this two-step
exploration reaches every vertex. Therefore $u_i\in\{-1,1\}$ for every
$i\in\VV$. Moreover, for every directed two-step path $i\to r\to k$, $u_k=(2q_i-1)u_i$. Multiplying $u$ by $-1$ if necessary, we may assume that $u_l=1$ when
$l\in\PP$ and $u_l=-1$ when $l\in\DD$. Define
\[
P_1=\{i\in\PP:u_i=1\},\qquad
P_2=\{i\in\PP:u_i=-1\},
\]
and
\[
D_1=\{i\in\DD:u_i=-1\},\qquad
D_2=\{i\in\DD:u_i=1\}.
\]
These four sets are mutually disjoint and their union is $\VV$. We claim that the GPA, started from $l$, assigns vertices consistent with these
four sets. This is proved by induction over the steps of the algorithm. The
initial assignment is consistent by our choice of the sign of $u_l$. Suppose
that all vertices assigned up to some stage agree with the four sets above.
If the algorithm uses a directed two-step path $i\to r\to k$ to assign a new
vertex $i$ from an already assigned vertex $k$, then $u_k=(2q_i-1)u_i$. Hence the new assignment also agrees with the four sets above. Therefore GPA never
assigns a vertex to two incompatible classes. Since the two-step exploration
reaches every vertex, $\GG$ admits a partition under the GPA. 

We have shown that $\GG$ admits a partition under the GPA if and only if
$WQ\widetilde A-I$ is singular. Equivalently, $WQ\widetilde A-I$ is
invertible if and only if $\GG$ does not admit a partition under GPA.    
\end{proof}

 Throughout the examples below, we consider directed cycles labelled as
$1\to2\to\cdots\to k\to1$, for some $k\in\mathbb N$. All indices are read
 modulo $k$. We will always assume (A1), so the sampling and reinforcement
graphs have the same directed cycle as their common edge set. Hence the only non-zero entries of $W$ are $w_{i,i+1}$, and column-stochasticity gives
$w_{i,i+1}=1$ for each $i=1,\ldots,k$. Similarly, the only non-zero entries
 of $\widetilde A$ are $\mu_{i,i+1}$. 

\begin{corollary}
   [Odd cycles]
Consider an odd cycle with $k$ vertices. Further assume that all vertices are positive-critical and either preferential or
de-preferential. Then, $WQ\widetilde A-I$ is invertible if and only if the number of
de-preferential nodes is odd.
\end{corollary}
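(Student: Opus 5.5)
On a directed cycle the matrices are simple enough that I would compute directly instead of going through the GPA. By the setup before the corollary, $W$ is the cyclic shift with $w_{i,i+1}=1$. Positive-criticality means each vertex $i+1$ has the single in-neighbour $i$ with $\sum_j\mu_{j,i+1}=1$, hence $\mu_{i,i+1}=1$ and $\widetilde A=W$. So $WQ\widetilde A = WQW$, where $Q=\operatorname{diag}(\epsilon_1,\ldots,\epsilon_k)$ and $\epsilon_i=2q_i-1\in\{-1,1\}$.

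For a row vector $v$, a short index check gives $(vW)_{m}=v_{m-1}$. Hence
\[
(vWQW)_m=\epsilon_{m-1}v_{m-2}.
\]
The matrix $WQW$ therefore acts as a signed shift by two steps. Since $k$ is odd, the shift $m\mapsto m-2$ is a single $k$-cycle on $\mathbb Z/k$.

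Next I solve $v=vWQW$, that is $v_m=\epsilon_{m-1}v_{m-2}$ for all $m$. Starting from $v_0$ and iterating $k$ times around the two-step orbit, which visits every index once, I obtain
\[
v_0=\Big(\prod_{i=1}^k\epsilon_i\Big)v_0,
\]
because the factors $\epsilon_{m-1}$ range over all residues $m-1$. If $\prod_i\epsilon_i=-1$, then $v_0=0$, and propagating along the orbit forces $v\equiv0$, so $H$ is invertible. If $\prod_i\epsilon_i=1$, I choose $v_0=1$ and define the remaining entries by the recursion. This is consistent precisely because the product is $1$, so it gives a nonzero null vector and $H$ is singular.

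The product $\prod_i\epsilon_i$ equals $(-1)^{|\DD|}$. Hence $H$ is invertible if and only if $|\DD|$ is odd, which is the claim.

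The step that needs care is the index bookkeeping: I must confirm that $(WQW)_{jm}$ is nonzero exactly for $j=m-2$ with value $\epsilon_{m-1}$, and that the odd length of the cycle makes the two-step orbit cover all vertices. Without that, the product in the closing identity would run over only half the $\epsilon_i$.

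As a cross-check, the same conclusion follows from Theorem~\ref{thm:GPA}. An odd cycle supplies the odd cycle the theorem requires, and the GPA partition succeeds exactly when the signs propagated around the single two-step orbit return consistently, which is again the condition $\prod_i\epsilon_i=1$.
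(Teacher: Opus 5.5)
Your proof is correct and takes essentially the paper's approach. The paper also uses $\mu_{i,i+1}=1$ (so $\widetilde A=W$) and simply records $\det(WQ\widetilde A-I)=\prod_{i=1}^k(2q_i-1)-1$, which is exactly what your single-orbit null-vector computation for the signed two-step shift establishes; your GPA cross-check likewise mirrors the remark the paper makes right after the corollary.
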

\begin{proof}
 The proof follows from the fact that in this case $\mu_{i,i+1}=1$ for every $i$ and $\det(WQ\widetilde A-I)=\prod_{i=1}^k(2q_i-1)-1$.    
\end{proof}
Observe that the GPA gives the same condition. If a partition exists, encode its four
classes by $\sigma_i=1$ on $P_1\cup D_2$ and $\sigma_i=-1$ on $P_2\cup D_1$.
Along the cycle, the partition pattern gives $\sigma_{i+2}=(2q_i-1)\sigma_i$. Iterating once around the odd cycle gives
$\sigma_i=\left(\prod_{r=1}^k(2q_r-1)\right)\sigma_i$. Thus a GPA partition
exists if and only if the number of de-preferential vertices is even.

\medskip 
 We now consider directed even cycles. Let the cycle have even length $2k$, and set
 $\VV_1=\{1,3,\ldots,2k-1\}$ and $\VV_2=\{2,4,\ldots, 2k\}$. The two-step
exploration preserves parity: starting from a vertex in $\VV_a$, the GPA
 only explores vertices in $\VV_a$. We denote the corresponding two-step
 components by $\GG_1$ and $\GG_2$. Note that the odd-cycle GPA criterion in Theorem~\ref{thm:GPA} does not apply. Moreover, the two-step exploration no longer reaches the whole graph from a single starting vertex: it remains trapped in one of the two parity components.  

Theorem~\ref{thm:invertibility_1} already addresses some cases. On an even directed cycle, vertices of opposite parity are connected by a directed path of odd length. So if both $\GG_1$ and $\GG_2$ contain subcritical vertices, then Theorem~\ref{thm:invertibility_1} applies, and $WQ\widetilde A-I$ is invertible. 

Hence, in the cases not covered by Theorem~\ref{thm:invertibility_1} and Theorem~\ref{thm:GPA}, all subcritical vertices have the same parity, or equivalently, they belong to either $\GG_1$ or $\GG_2$. This motivates the following component-wise criterion.

 \begin{proposition}\label{prop:evencycle}
Suppose $\GG$ is a directed even cycle of length $2k$,
with $\VV_1$ and $\VV_2$ as defined above.
Suppose that, for some $a\in\{1,2\}$, every vertex in
$\VV_a$ is positive-critical, while every vertex in
$\VV_{3-a}$ is either preferential or de-preferential.
If $|\DD\cap\VV_{3-a}|$ is even, then
$WQ\widetilde A-I$ is singular.
\end{proposition}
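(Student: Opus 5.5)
The plan is to exhibit explicitly a nonzero row vector $v$ with $vWQ\widetilde A=v$, supported on the parity class $\VV_a$. First I will compute $M=WQ\widetilde A$ on the directed cycle $1\to2\to\cdots\to2k\to1$. The only nonzero entries of $W$ are $w_{i,i+1}=1$, and the only nonzero entries of $\widetilde A$ are $\mu_{i,i+1}$. Hence
\[
M_{ij}=\sum_r w_{ir}(2q_r-1)\mu_{rj}
\]
is nonzero only for $j=i+2$, where $M_{i,i+2}=(2q_{i+1}-1)\mu_{i+1,i+2}$. Writing out $vM=v$ coordinatewise, with indices modulo $2k$, gives
\[
v_k=(2q_{k-1}-1)\,\mu_{k-1,k}\,v_{k-2},\qquad k=1,\ldots,2k.
\]
This system splits into two decoupled cyclic recursions, one on $\VV_1$ and one on $\VV_2$, because $k$ and $k-2$ have the same parity.

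Next I will use the hypotheses to simplify the coefficients on the class $\VV_a$. Each vertex $k$ of the cycle has the single in-neighbour $k-1$. If $k\in\VV_a$, positive-criticality of $k$ forces $\mu_{k-1,k}=\rho_k=1$. The predecessor $k-1$ lies in $\VV_{3-a}$, so it is preferential or de-preferential, and therefore $s_{k-1}:=2q_{k-1}-1\in\{-1,1\}$. Thus on $\VV_a$ the recursion reads $v_k=s_{k-1}v_{k-2}$, with signs indexed by the vertices of $\VV_{3-a}$.

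Now construct $v$. Set $v_j=0$ for $j\in\VV_{3-a}$. This satisfies the equations indexed by $\VV_{3-a}$ trivially, since their right-hand sides involve only $v_{k-2}\in\VV_{3-a}$, whatever the (possibly subcritical) coefficients $\mu_{k-1,k}$ there. On $\VV_a$, pick a starting vertex $k_0\in\VV_a$, put $v_{k_0}=1$, and define the remaining values by propagating $v_{k}=s_{k-1}v_{k-2}$ around the $k$ vertices of $\VV_a$.

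The only point requiring care, and the main (though mild) obstacle, is the consistency of this propagation after one full loop. Returning to $k_0$ multiplies $v_{k_0}$ by
\[
\prod_{j\in\VV_{3-a}}s_j=(-1)^{|\DD\cap\VV_{3-a}|},
\]
since every vertex of $\VV_{3-a}$ appears exactly once as a predecessor of a vertex of $\VV_a$. By hypothesis this product equals $1$, so the construction is consistent. The resulting $v$ is nonzero with $v(WQ\widetilde A-I)=0$, and hence $WQ\widetilde A-I$ is singular. As a sanity check, this mirrors the odd-cycle corollary, where the same sign-product governs the determinant.
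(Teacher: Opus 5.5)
Your proof is correct. It rests on the same structural fact as the paper's proof. The matrix $WQ\widetilde A$ has nonzero entries only in positions $(i,i+2)$, so it decouples into two weighted cyclic blocks, one on $\VV_1$ and one on $\VV_2$. On the block over $\VV_a$ the weights are $\pm1$ with product $(-1)^{|\DD\cap\VV_{3-a}|}=1$.

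You finish differently. The paper writes $\det(WQ\widetilde A-I)$ as the product, over the two parity blocks, of $1-\prod(\text{cycle weights})$. This implicitly uses the fact that a weighted $k$-cycle matrix has characteristic polynomial $x^k-\prod c_i$. The paper then observes that the factor for $\VV_a$ vanishes. You instead exhibit the left null vector explicitly, supported on $\VV_a$ with $\pm1$ entries, and check that it closes up consistently around the loop.

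Your version is self-contained and makes the singular direction visible; it is the even-cycle analogue of the sign vector $\sigma$ in the proof of Theorem~\ref{thm:GPA}. The determinant version costs nothing extra and yields both factors at once. That gives an exact invertibility test for any directed even cycle, as exploited in Example~\ref{ex:4cycle}, rather than only a certificate of singularity.

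One cosmetic point: you use $k$ both for the half-length of the cycle and as the running vertex index (e.g.\ ``$k=1,\ldots,2k$''), so rename one of them.
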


\begin{proof}
It suffices to consider $a=1$; the other case follows
by shifting the cyclic labelling by one vertex.
Since every odd-labelled vertex is positive-critical
and has exactly one reinforcement in-neighbour,
$\mu_{2j,2j+1}=1$ for $j=1,\ldots,k$, with indices
read modulo $2k$. Moreover,
$\prod_{j=1}^{k}(2q_{2j}-1)
=(-1)^{\left|\DD\cap\VV_2 \right|}=1.$
The determinant factors as
\[
\det(WQ\widetilde A-I)
=
\left(1-\prod_{j=1}^{k}
(2q_{2j-1}-1)\mu_{2j-1,2j}\right) \times
\left(1-\prod_{j=1}^{k}
(2q_{2j}-1)\mu_{2j,2j+1}\right).
\]
The second factor is zero, proving the assertion.
\end{proof}
In general, a strongly connected graph with no odd directed cycles is a bipartite digraph. That is, there exist vertex sets $U_1$ and $U_2$ such that all directed edges from nodes in $U_1$ point to $U_2$ and all directed edges from $U_2$ point to nodes in $U_1$. Moreover, there are no directed edges from nodes in $U_i$ to $U_i$ for $i=1,2$. Under the common-support assumption for the sampling and reinforcement graphs, and after a suitable ordering of the vertices, the matrices $\widetilde A$ and $W$ have the corresponding block forms
\[
WQ\tA
=
\begin{pmatrix}
0&W_1\\
W_2&0
\end{pmatrix}
\begin{pmatrix}
Q_1&0\\
0&Q_2
\end{pmatrix}
\begin{pmatrix}
0&\widetilde A_1\\
\widetilde A_2&0
\end{pmatrix}  = 
\begin{pmatrix}
W_1Q_2\widetilde A_2&0\\
0&W_2Q_1\widetilde A_1
\end{pmatrix}.
\]
Hence $\det(WQ\tA-I) = \det(W_1Q_2\widetilde A_2-I_{|U_1|})\det(W_2Q_1\widetilde A_1-I_{|U_2|})$. This is similar to what we get for the even cycle case, except that it is difficult to characterize (in terms of graph structure) when either $\det(W_2Q_1\widetilde A_1-I_{|U_2|})$ or $\det(W_1Q_2\widetilde A_2-I_{|U_1|})$ is zero.

We close this section with the following remark on weakening the condition of strongly connectedness of the underlying graph.
\begin{remark}[Weakly connected graphs]\label{rem:Weak}
The results in this section can be extended from strongly
connected graphs to weakly connected graphs. We briefly indicate the extension.
\begin{itemize}
    \item Suppose first that there is a non-empty set of {\it stubborn nodes}
    $\mathcal S\subseteq \mathcal V$, where a node $i\in\mathcal S$ is called
    stubborn if $d_i^{\mathrm{in}}=0$. If $S \neq \emptyset$ and the subgraph induced by
    $\mathcal V\setminus \mathcal S$ is strongly connected, then the conclusion of Theorem~\ref{thm:deterministic_limit} continues to hold with the limit on $V \setminus S$ having the form different from equation\eqref{deter_limit}. The configurations of the nodes in $S$ remain unchanged over time. 


    \item Suppose next that there are no stubborn nodes, but the graph is
    still weakly connected. Decompose $\mathcal G$ into its strongly connected
    components. Using an argument similar to Corollary~1 of \cite{kaur2025interacting}, it
    is enough to check the deterministic-limit condition on the stubborn
    strongly connected components. Here, a stubborn strongly connected
    component is a strongly connected subgraph induced by a set
    $F\subseteq\mathcal V$ such that there is no $j\notin F$, $v\in F$
    with $j\to v$.
\end{itemize}
\end{remark}

\subsection{Fluctuations in the Deterministic Regime}
\label{sec:LILCLT}

Throughout this subsection, we consider the general model
of Section~\ref{sec:model}, allowing distinct sampling and
reinforcement graphs, and assume that
$H=WQ\widetilde A-I$ is invertible.
Theorem~\ref{thm:deterministic_limit} then gives
$\Z_t\to\bZ^*$ almost surely, where $\bZ^*H+C=0$.
Here we state the  central limit theorem and a law of the iterated logarithm and 
the proofs are given in Section~\ref{sec:Proofs_1}. Defining $X_t=\Z_t-\bZ^*$, the recursion \eqref{eq:main_recur_vec} becomes \begin{equation} \label{eq:centered_SA_recursion} X_{t+1} = X_t + \frac{1}{t+1} \bigl( X_tH+\Delta M_{t+1}+\epsilon_{t+1} \bigr). \end{equation} 
Throughout this subsection, we allow the perturbed recursion
\eqref{eq:gen_recur_perturb} and assume that its remainder
satisfies
\begin{equation}\label{eq:error_one_step_bound}
\|\epsilon_t\|=O(t^{-1})
\qquad\text{almost surely}.
\end{equation}
For the unperturbed recursion \eqref{eq:gen_recur},
$\epsilon_t\equiv0$.
The remainder is retained to include normalized urn processes:
rewriting their exact step sizes using $1/(t+1)$ produces
a generally nonzero remainder satisfying this bound,
as shown in Proposition~\ref{prop:urn}.
Its contribution to a single update is therefore
$O(t^{-2})$ almost surely.

\medskip 
\noindent 
Let $B:=WQ\widetilde A$ and assume that $B$ is
diagonalizable over $\mathbb C$.
Denote its eigenvalues by $\lambda_1,\ldots,\lambda_N$,
ordered so that
\[
\operatorname{Re}(\lambda_1)
\ge\operatorname{Re}(\lambda_2)
\ge\cdots\ge\operatorname{Re}(\lambda_N).
\]
Write
\[
B=U\Lambda U^{-1},
\qquad
\Lambda=\operatorname{diag}(\lambda_1,\ldots,\lambda_N).
\]
Let $Y_t:=X_tU$ and, for each $r\in[N]$, set
$\lambda_r':=\lambda_r-1$.
Thus $\lambda_r'$ is the corresponding eigenvalue of
the drift matrix $H=B-I$.

In Section~\ref{sec:convergence_random}, the eigenvalues
of $I-Q\widetilde A$ are denoted by $\nu_r$.
When $W=I$, the conventions are related by
$\nu_r=1-\lambda_r$, after matching indices.

For positive sequences $(a_n)$ and $(b_n)$, we write
$a_n\sim b_n$ if $a_n/b_n\to1$, and $a_n\asymp b_n$
if there exist constants $c_1,c_2>0$ such that
$c_1b_n\le a_n\le c_2b_n$ for all sufficiently large $n$.

The next theorem establishes almost sure convergence rates toward the deterministic limit. The proof is based on coordinate-wise estimates for the projected process $Y_t=X_tU$, together with the spectral decomposition of $B$. Depending on the real parts of the eigenvalues of $B$, three regimes emerge: diffusive, critical, and super-diffusive. The law of the iterated logarithm and the central limit theorem are obtained from general stochastic approximation results of Mokkadem and Pelletier \cite{MP2005} and Zhang \cite{MR3582813}; the required auxiliary estimates are given in Appendix~\ref{app:appendix}.

\begin{theorem}[Projected martingale rates]
\label{thm:projected_martingale_rates}
For every \(\varepsilon>0\) and every \(r\in[N]\), we have
\[ 
Y_t(r) = \begin{cases} o\!\left(t^{-1/2}(\log t)^{1/2+\varepsilon}\right), & \operatorname{Re}(\lambda_r)<\frac12, \\[6pt]
o\!\left( t^{-1/2} (\log t)^{1/2} (\log\log t)^{1/2+\varepsilon} \right), & \operatorname{Re}(\lambda_r)=\frac12, \\[6pt] O\!\left(t^{\operatorname{Re}(\lambda_r)-1}\right), & \frac12<\operatorname{Re}(\lambda_r)<1 , 
\end{cases} \qquad\text{a.s.} 
\]
Consequently
\[
\|X_t\|
=
\begin{cases}
o\!\left(t^{-1/2}(\log t)^{1/2+\varepsilon}\right),
& \text{if } \operatorname{Re}(\lambda_1)<\frac12,\\[6pt]
o\!\left(t^{-1/2}(\log t)^{1/2}
(\log\log t)^{1/2+\varepsilon}\right),
& \text{if } \operatorname{Re}(\lambda_1)=\frac12,\\[6pt]
O\!\left(t^{\operatorname{Re}(\lambda_1)-1}\right),
& \text{if } \frac12<\operatorname{Re}(\lambda_1)<1.
\end{cases}
\qquad \text{a.s.}
\]
\end{theorem}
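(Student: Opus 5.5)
Right-multiplying the centered recursion \eqref{eq:centered_SA_recursion} by $U$ decouples the drift. Since $HU=U(\Lambda-I)$, we get for each $r\in[N]$ the scalar complex recursion
\[
Y_{t+1}(r)=\Bigl(1+\frac{\lambda_r'}{t+1}\Bigr)Y_t(r)+\frac{1}{t+1}\bigl(\xi_{t+1}(r)+\eta_{t+1}(r)\bigr),
\]
where $\xi_{t+1}=\Delta M_{t+1}U$ is a bounded martingale difference and $\eta_{t+1}=\epsilon_{t+1}U=O(t^{-1})$ a.s. We assume $H$ is invertible, so every eigenvalue $\lambda_r$ of $B$ satisfies $|\lambda_r|\le1$ (as in the proof of Theorem~\ref{thm:deterministic_limit}) and $\lambda_r\neq1$. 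Hence $\operatorname{Re}(\lambda_r)<1$, and the three cases in the statement exhaust all possibilities.

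Set $\Pi_t(r)=\prod_{s=1}^{t}\bigl(1+\lambda_r'/s\bigr)$. Because $\operatorname{Re}\lambda_r'<0$, we have $|\Pi_t(r)|\asymp t^{\operatorname{Re}\lambda_r'}=t^{\operatorname{Re}\lambda_r-1}$; the factors $1+\lambda_r'/s$ vanish at most for finitely many small $s$, so we start the recursion at a fixed finite time. Unrolling the recursion gives
\[
Y_t(r)=\Pi_t(r)\Bigl[Y_{t_0}(r)\Pi_{t_0}(r)^{-1}+\sum_{s=t_0}^{t-1}\frac{\xi_{s+1}(r)+\eta_{s+1}(r)}{(s+1)\Pi_{s+1}(r)}\Bigr].
\]
The initial term and the $\eta$-sum are handled deterministically. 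The $\eta$-summands are $O(s^{-2-\operatorname{Re}\lambda_r'})=O(s^{-1-\operatorname{Re}\lambda_r})$. Multiplied by $t^{\operatorname{Re}\lambda_r-1}$, their sum contributes $O(t^{\operatorname{Re}\lambda_r-1})$ if $\operatorname{Re}\lambda_r>0$, $O(t^{-1}\log t)$ if $\operatorname{Re}\lambda_r=0$, and $O(t^{-1})$ if $\operatorname{Re}\lambda_r<0$. In every case this is $o(t^{-1/2})$ when $\operatorname{Re}\lambda_r\le1/2$ and $O(t^{\operatorname{Re}\lambda_r-1})$ when $\operatorname{Re}\lambda_r>1/2$.

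For the martingale part $\mathcal M_t(r)=\sum_s \xi_{s+1}(r)/((s+1)\Pi_{s+1}(r))$ we split by the real part, treating real and imaginary parts separately. The conditional variance increments are bounded by $c\,s^{-2\operatorname{Re}\lambda_r}$, so $\langle\mathcal M\rangle_t=O(t^{1-2\operatorname{Re}\lambda_r})$ if $\operatorname{Re}\lambda_r<1/2$, $O(\log t)$ if $\operatorname{Re}\lambda_r=1/2$, and is convergent if $\operatorname{Re}\lambda_r>1/2$. In the last case $\mathcal M_t(r)$ converges a.s. by the $L^2$ martingale convergence theorem, giving $Y_t(r)=O(t^{\operatorname{Re}\lambda_r-1})$. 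In the first two cases we apply the strong law for martingales with bounded increments: $\mathcal M_t=o\bigl(\langle\mathcal M\rangle_t^{1/2}(\log\langle\mathcal M\rangle_t)^{1/2+\varepsilon}\bigr)$ a.s. on the event $\{\langle\mathcal M\rangle_\infty=\infty\}$, and $\mathcal M_t$ converges otherwise. The appropriate auxiliary lemma from Appendix~\ref{app:appendix} would be invoked here, or Chow's theorem if it is not available.
\begin{itemize}
\item For $\operatorname{Re}\lambda_r<1/2$ this yields $\mathcal M_t=o\bigl(t^{1/2-\operatorname{Re}\lambda_r}(\log t)^{1/2+\varepsilon}\bigr)$, and multiplying by $|\Pi_t|\asymp t^{\operatorname{Re}\lambda_r-1}$ gives the first rate.
\item For $\operatorname{Re}\lambda_r=1/2$ it yields $o\bigl((\log t)^{1/2}(\log\log t)^{1/2+\varepsilon}\bigr)$, and multiplying by $t^{-1/2}$ gives the second rate.
\end{itemize}
The main subtlety is making the variance bounds uniform for complex $\lambda_r$ and checking that the upper bound (rather than the exact order) of $\langle\mathcal M\rangle_t$ suffices. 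It does, because the map $x\mapsto x^{1/2}(\log x)^{1/2+\varepsilon}$ is increasing.

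Finally, $X_t=Y_tU^{-1}$, so $\|X_t\|\le\|U^{-1}\|\max_r|Y_t(r)|$. The rates in the statement are monotone in $\operatorname{Re}\lambda_r$: any rate with $\operatorname{Re}\lambda_r\le1/2$ is dominated by the rate for larger real part, and $O(t^{\operatorname{Re}\lambda_r-1})$ increases with $\operatorname{Re}\lambda_r$. The worst coordinate is therefore the one with $r=1$, which gives the bound on $\|X_t\|$.
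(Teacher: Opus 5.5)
Your argument for the coordinatewise rates is essentially the paper's. You decouple via $U$, unroll with the Euler product (the paper's $d_t(r)$), bound the remainder pathwise, apply Duflo's estimate in the diffusive and critical regimes, and use $L^2$ convergence in the superdiffusive regime. You are more explicit than the paper about the event $\{\langle\mathcal M\rangle_\infty<\infty\}$ and about why a one-sided bound on $\langle\mathcal M\rangle_t$ suffices. There are two harmless slips. First, the product should run over $s\ge t_0+1$ from the start: for $\lambda_r=0$ or $\lambda_r=-1$ the factor at $s=1$ or $s=2$ vanishes, so $\Pi_t(r)\equiv0$ and $\Pi_{t_0}(r)^{-1}$ is undefined. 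Second, at $\operatorname{Re}\lambda_r=\tfrac12$ the initial and remainder terms are $O(t^{-1/2})$, not $o(t^{-1/2})$; this is still enough for the critical rate.

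The genuine gap is in your last paragraph. The stated rates are not monotone across the first two regimes, because
\[
\frac{(\log t)^{1/2+\varepsilon}}{(\log t)^{1/2}(\log\log t)^{1/2+\varepsilon}}\longrightarrow\infty .
\]
So the diffusive bound $o\bigl(t^{-1/2}(\log t)^{1/2+\varepsilon}\bigr)$ is weaker than the critical bound, and shrinking $\varepsilon$ does not fix this. Consequently, when $\operatorname{Re}\lambda_1=\tfrac12$ but some other eigenvalue has $\operatorname{Re}\lambda_r<\tfrac12$, the bounds you proved for those coordinates do not give the asserted bound on $\|X_t\|$. The other two cases of the consequence are fine. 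The paper's proof makes the same unjustified transfer (``the coordinatewise projected bounds imply the asserted bounds''), so the missing step has to be supplied in both.

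The fix is a sharper rate for the coordinates with $\operatorname{Re}\lambda_r<\tfrac12$. Let $x_s=\xi_{s+1}(r)/((s+1)\Pi_{s+1}(r))$ and $b_s=s^{1/2-\operatorname{Re}\lambda_r}(\log s)^{1/2}(\log\log s)^{1/2+\varepsilon}$, which increases to infinity. Using $|x_s|^2\le c\,s^{-2\operatorname{Re}\lambda_r}$,
\[
\sum_{s}\mathbb{E}\bigl|x_s/b_s\bigr|^2\le c\sum_{s}\frac{1}{s\log s\,(\log\log s)^{1+2\varepsilon}}<\infty .
\]
Hence $\sum_s x_s/b_s$ converges almost surely, and Kronecker's lemma gives $\mathcal M_t(r)=o(b_t)$. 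This yields $Y_t(r)=o\bigl(t^{-1/2}(\log t)^{1/2}(\log\log t)^{1/2+\varepsilon}\bigr)$ for every coordinate with $\operatorname{Re}\lambda_r\le\tfrac12$. A martingale LIL would even give $O(t^{-1/2}\sqrt{\log\log t})$. With this strengthening, your final step through $\|X_t\|\le\|U^{-1}\|\max_r|Y_t(r)|$ goes through in all three cases.
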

The three cases correspond respectively to the diffusive, critical, and superdiffusive spectral regimes.

\begin{remark}
The joint LIL and CLT below are stated for deterministic limits.  The proof of Theorem~\ref{thm:projected_martingale_rates} remains valid whenever $\Z_t\xrightarrow{\mathrm{a.s.}}\bZ^*$ for a (possibly random) limit $\bZ^*$ satisfying $\bZ^*H+C=0$  almost surely. 

\end{remark}

For each $j\in[N]$, define
\[
s_j^\star:=\sum_{k\in\VV}w_{kj}\bZ^*(k),
\qquad
p_j^\star:=q_js_j^\star+(1-q_j)(1-s_j^\star).
\]
Set
\[
\Gamma'
:=\operatorname{diag}\bigl(
p_1^\star(1-p_1^\star),\ldots,
p_N^\star(1-p_N^\star)
\bigr),
\qquad
\Gamma:=\widetilde A^\top\Gamma'\widetilde A.
\]
By Lemma~\ref{lem:limiting_martingale_covariance},
$\Gamma$ is the limiting conditional covariance of
$\Delta M_{t+1}$.
For $\operatorname{Re}(\lambda_1)<1/2$, define
\begin{equation}\label{eq:Sigma_infty_LIL}
        \Sigma_\infty
        :=
        \int_0^\infty
        \exp\!\left((B-\tfrac12 I)^\top u\right)
        \Gamma
        \exp\!\left((B-\tfrac12 I)u\right)
        \,du.
\end{equation}
The next theorem establishes a joint law of the iterated logarithm for the centered process.  

\medskip
\begin{theorem}[Joint compact LIL]
\label{thm:joint_compact_LIL_SA}
Assume that \(\operatorname{Re}(\lambda_1)<1/2\), that \(\Z_t\) admits a deterministic limit, and that \(\Gamma\) is positive definite. Then, with probability one, the sequence $ \left\{
        \sqrt{\frac{t}{2\log\log t}}\,
        (\Z_t-\bZ^*)
        \right\}_{t\ge 3}$
is relatively compact in $\IR^N$, and its set of limit points is the ellipsoid  $\mathcal E_{\mathrm{LIL}}
        =
        \left\{
        x\in \IR^N:
        x\Sigma_{\infty}^{-1}x^\top \le 1
        \right\}.$
\end{theorem}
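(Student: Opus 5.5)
The plan is to obtain the joint compact LIL from a general stochastic-approximation LIL, namely the result of Mokkadem and Pelletier \cite{MP2005} (or equivalently Zhang \cite{MR3582813}), applied to the centered recursion \eqref{eq:centered_SA_recursion}. That recursion has step size $\gamma_t=1/(t+1)$, linear drift $x\mapsto xH$ with $H=B-I$, martingale noise $\Delta M_{t+1}$, and remainder $\epsilon_{t+1}$. For step sizes $1/t$, the relevant stability condition is that $H+\tfrac12 I=B-\tfrac12 I$ be Hurwitz. This holds because every eigenvalue of $B$ satisfies $\operatorname{Re}(\lambda_r)\le\operatorname{Re}(\lambda_1)<1/2$.

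Under this condition, the limit-point set of $\sqrt{t/(2\log\log t)}\,X_t$ is the ellipsoid generated by the stationary covariance $\Sigma_\infty$. This covariance solves the Lyapunov equation
\[
(B-\tfrac12 I)^\top\Sigma_\infty+\Sigma_\infty(B-\tfrac12 I)=-\Gamma,
\]
which is exactly \eqref{eq:Sigma_infty_LIL}. The integral converges because $B-\tfrac12 I$ is Hurwitz. Since $\Gamma$ is positive definite, $\Sigma_\infty$ is positive definite, so $\Sigma_\infty^{-1}$ exists and $\mathcal E_{\mathrm{LIL}}$ is a nondegenerate ellipsoid. I will also check that the row-vector convention ($X_tH$ rather than $HX_t$) produces the transposes in the order displayed.

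The hypotheses of the general theorem are verified one at a time.

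\emph{Convergence.} By Theorem~\ref{thm:deterministic_limit}, $X_t\to0$ almost surely.

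\emph{Bounded noise.} The increments $\Delta M_{t+1}$ are uniformly bounded, since the indicators are Bernoulli and $|\mu_{ji}|\le1$. Hence any conditional moment condition of order greater than two holds trivially.

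\emph{Convergence of the conditional covariance.} Conditional independence of the indicators gives
\[
\mathbb E[\Delta M_{t+1}^\top\Delta M_{t+1}\mid\FF_t]=\widetilde A^\top\Gamma'_t\widetilde A,
\]
where $\Gamma'_t$ is diagonal with entries $p_{t,j}(1-p_{t,j})$ and $p_{t,j}$ is affine in $\Z_t$. Therefore $\Gamma'_t\to\Gamma'$ almost surely, which is Lemma~\ref{lem:limiting_martingale_covariance}. Some versions of the LIL also need a rate for this convergence. In that case I would use Theorem~\ref{thm:projected_martingale_rates}: it gives $\|\Gamma'_t-\Gamma'\|=O(\|X_t\|)=o(t^{-1/2}(\log t)^{1/2+\varepsilon})$, which is more than enough.

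\emph{Negligible remainder.} By \eqref{eq:error_one_step_bound}, $\sum_t t^{-1}\|\epsilon_{t+1}\|$ is summable. After the linearised solution is written via the fundamental matrix $\prod_{s\le t}(I+H/(s+1))\approx t^{H}$, the accumulated remainder is $O(t^{\operatorname{Re}\lambda_1-1}\log t+t^{-1})$. This is $o(\sqrt{\log\log t/t})$ precisely because $\operatorname{Re}\lambda_1<1/2$. There is no nonlinear remainder, since the drift is exactly linear.

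The main obstacle is matching the exact normalisation and formulation of the cited theorem. \cite{MP2005} state their LIL for step sizes $\gamma_n=\gamma_0 n^{-\alpha}$, with the critical case $\alpha=1$ requiring $\gamma_0 H+\tfrac12 I$ to be stable; the recursion here has $\gamma_0=1$ and the index shifted by one. Their results are stated in column-vector form, so I must transpose: $X_t^\top$ evolves with drift matrix $H^\top$, and their covariance integral then becomes $\int_0^\infty e^{(B-\frac12I)^\top u}\Gamma e^{(B-\frac12I)u}\,du$. I expect this to reproduce \eqref{eq:Sigma_infty_LIL} after the transpose bookkeeping.

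If the cited result is inconvenient, the fallback is a direct argument. Write
\[
X_t\approx\sum_{s\le t}\frac1{s}\,\Delta M_s\,(s/t)^{-H}\dots
\]
and diagonalise through $Y_t=X_tU$ as in Theorem~\ref{thm:projected_martingale_rates}. Then apply Stout's LIL to one-dimensional projections $\langle X_t,\theta\rangle$, which yields the support function $\sqrt{\theta^\top\Sigma_\infty\theta}$. A Cramér–Wold/compactness argument, using the fact that a convex compact set is determined by its support function together with relative compactness from the rate bounds, then identifies the full limit set as $\mathcal E_{\mathrm{LIL}}$.
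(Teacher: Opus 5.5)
Your proposal is correct and follows essentially the same route as the paper. Both apply the Mokkadem--Pelletier compact LIL (Theorem~\ref{aux:MP_compact_LIL}) to the centred recursion \eqref{eq:centered_SA_recursion} with $\gamma_t=\sigma_t=1/(t+1)$, so that $v_t=t+1$, $s_t\sim\log t$ and $\rho_{\mathrm{MP}}=1<2L$ with $L=1-\operatorname{Re}(\lambda_1)>1/2$, and both use bounded increments together with Lemma~\ref{lem:limiting_martingale_covariance} for the invariance principle. The only difference is that the paper handles the remainder directly through condition \textbf{(5)}, since $\|\epsilon_{t+1}\|=O(t^{-1})=O(v_t^{-1/2})$, so your accumulated-error estimate and the fallback argument are unnecessary.
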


\medskip
\begin{remark} 
The matrix \(\Gamma'\) is diagonal with entries $\Gamma'_{jj}=p_j^\star(1-p_j^\star)$, the matrix \(\Gamma'\) is positive definite whenever $ 0<p_j^\star<1$ for every $j\in[N]$. This condition holds, for example, if \(q_j\in(0,1)\) for every \(j\). More
generally, it is enough that $$0<
        q_js_j^\star+(1-q_j)(1-s_j^\star)
        <1
        \qquad\text{for every }j\in[N].$$

If, in addition, \(\widetilde A\) is nonsingular, then \(\Gamma\) is positive definite. Indeed, for every nonzero \(x\in\IR^N\),  
$x^\top \Gamma x  = (\widetilde A x)^\top \Gamma'(\widetilde A x)>0$, because \(\widetilde A x\neq0\) and \(\Gamma'\) is positive definite.
\end{remark}

\begin{theorem}[Joint Central Limit Theorem]
\label{thm:joint_CLT_SA}
Assume that $\Z_t$ admits an almost sure deterministic limit $\bZ^*$ and that
$\operatorname{Re}(\lambda_1)<\frac12$.
Then, as $t\to\infty$,
\[
        \sqrt t\,(\Z_t-\bZ^*)
        \xrightarrow{d}
        \mathcal N(0,\Sigma_\infty),        
\]
where \(\Sigma_\infty\) is defined in
\eqref{eq:Sigma_infty_LIL}. The limiting distribution may be
degenerate.
\end{theorem}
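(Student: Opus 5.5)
The plan is to apply a general CLT for stochastic approximation with step size $1/(t+1)$ to the centered recursion \eqref{eq:centered_SA_recursion}, $X_{t+1}=X_t+\frac1{t+1}(X_tH+\Delta M_{t+1}+\epsilon_{t+1})$, following Zhang \cite{MR3582813} (or equivalently Mokkadem--Pelletier \cite{MP2005}). Because the recursion is linear in $X_t$, it can be unrolled exactly. Set $\Pi_{s,t}=\prod_{u=s+1}^{t}(I+H/u)$. Then
\[
X_t = X_{0}\Pi_{0,t}+\sum_{s=1}^{t}\frac1s(\Delta M_s+\epsilon_s)\Pi_{s,t}.
\]
The standard estimate $\Pi_{s,t}\approx (t/s)^{H}=\exp(H\log(t/s))$ holds uniformly, up to a factor $1+O(1/s)$. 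Since $H=B-I$ and $\operatorname{Re}\lambda_1<1/2$, we get $\|(t/s)^H\|\le C(t/s)^{\operatorname{Re}\lambda_1-1+\delta}$, and this is where the diagonalizability assumption, through $U$, is used. The first step disposes of the non-martingale parts. The initial term is $O(t^{\operatorname{Re}\lambda_1-1+\delta})=o(t^{-1/2})$. By \eqref{eq:error_one_step_bound} the remainder contributes
\[
\sum_s s^{-2}(s/t)^{1-\operatorname{Re}\lambda_1-\delta},
\]
which is $O(t^{-1}\log t)$ or $O(t^{\operatorname{Re}\lambda_1-1+\delta})$, in either case $o(t^{-1/2})$. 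Hence $\sqrt t\,X_t=\sqrt t\sum_s s^{-1}\Delta M_s (t/s)^{H}+o_P(1)$.

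The second step applies the martingale CLT for triangular arrays to $\xi_{t,s}=\sqrt t\, s^{-1}\Delta M_s (t/s)^H$. The increments are bounded, since $\|\Delta M_s\|\le \sum|\mu|\le N$, so the conditional Lindeberg condition follows from $\max_s\|\xi_{t,s}\|\le C\sqrt t\, s^{-1}(t/s)^{\operatorname{Re}\lambda_1-1+\delta}\to0$. This bound is uniform in $s$ because the exponent $1/2-\operatorname{Re}\lambda_1-\delta$ is positive, which makes the maximum occur at $s=t$, giving $O(t^{-1/2})$. For the conditional covariance, Lemma~\ref{lem:limiting_martingale_covariance} gives $\mathbb E[\Delta M_{s}^\top\Delta M_{s}\mid\FF_{s-1}]\to\Gamma$ almost surely. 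This uses $\Z_t\to\bZ^*$, which is where the deterministic limit enters: the limiting covariance must be non-random for a Gaussian limit. Using $(t/s)^{H}=(t/s)^{-1/2}(t/s)^{B-\frac12 I}$ and writing $s=te^{-u}$, the sum
\[
t\sum_s s^{-2}\,\bigl((t/s)^{H}\bigr)^\top \Gamma_s (t/s)^{H}
\]
becomes a Riemann sum for $\int_0^\infty e^{(B-\frac12I)^\top u}\Gamma e^{(B-\frac12 I)u}\,du=\Sigma_\infty$. Since the row-vector convention puts the transpose on the left, I will check that this orientation matches \eqref{eq:Sigma_infty_LIL}.

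I expect the main obstacle to be justifying the replacement of $\Gamma_s$ by $\Gamma$, and of $\Pi_{s,t}$ by $(t/s)^H$, inside the sum. Both are Cesàro-type arguments. The integrand decays like $e^{-(1-2\operatorname{Re}\lambda_1-2\delta)u}$, so contributions from $s\le \eta t$ are uniformly small, and on $s\in[\eta t,t]$ one has $\Gamma_s\to\Gamma$ uniformly. Alternatively, I could invoke Zhang's theorem directly after checking its hypotheses: a Hurwitz drift after the $-\frac12 I$ shift, bounded increments, a.s.\ convergent conditional covariance, and a remainder of size $o(t^{-1/2})$. Degeneracy of $\Sigma_\infty$ is allowed, so no positive-definiteness of $\Gamma$ is needed. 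The conclusion $\sqrt t(\Z_t-\bZ^*)\xrightarrow{d}\mathcal N(0,\Sigma_\infty)$ then follows.
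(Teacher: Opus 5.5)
Your proposal is correct, but your main argument takes a different route from the paper's. The paper unrolls nothing. It checks the hypotheses of Zhang's stochastic-approximation CLT \cite[Theorem~2.3]{MR3582813} for the centred recursion \eqref{eq:centered_SA_recursion}:
\begin{itemize}
\item the Jacobian of $h(x)=x(I-B)$ has eigenvalues with real parts $1-\operatorname{Re}(\lambda_r)>1/2$;
\item $X_t\to0$ almost surely;
\item bounded increments give the Lindeberg condition;
\item Lemma~\ref{lem:limiting_martingale_covariance} gives the almost sure limit $\Gamma$ of the conditional covariance;
\item $\sum_{k\le t}\epsilon_k=O(\log t)=o(\sqrt t)$.
\end{itemize}
This is exactly the fallback you mention in your last paragraph.

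Your primary route instead exploits the linearity of the drift. You write $X_t$ explicitly through the commuting products $\Pi_{s,t}$, remove the initial and remainder terms pathwise, and apply the triangular-array martingale CLT. $\Sigma_\infty$ then appears as a Riemann-sum limit under $s=te^{-u}$. This is self-contained, and it shows exactly where the condition $\operatorname{Re}(\lambda_1)<1/2$ and the integral formula \eqref{eq:Sigma_infty_LIL} come from. The paper's route is far shorter but delegates all of this to Zhang's theorem.

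Two small corrections. First, because you allow $\delta>0$, the bound $\|(t/s)^H\|\le C(t/s)^{\operatorname{Re}\lambda_1-1+\delta}$ holds for any matrix, so diagonalizability of $B$ is not actually used where you say it is. Second, in the Lindeberg step, $\sqrt t\,s^{-1}(t/s)^{a-1}=t^{-1/2}(t/s)^{a}$ with $a=\operatorname{Re}\lambda_1+\delta$. When $a>0$ the maximum over $s$ is at $s=1$ and equals $t^{a-1/2}$, not $O(t^{-1/2})$. It still tends to $0$ because $a<1/2$, so the conclusion stands.
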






\section{ Random Limit Regime}\label{sec:convergence_random}
Section~\ref{sec:convergence_deter} gave structural conditions under which $\Z_t$ converges almost surely to a deterministic limit. When these conditions fail, the limiting vector, when it exists, is expected to be random. In this section, we study one such random-limit regime. Throughout the section we consider the following model.

\

\noindent{\bf (A2).}
We take $W=I$ and $q_i\in\{0,1\}$ for every $i\in\VV$.
The reinforcement graph $\GG=(\VV,\EE)$ is strongly connected,
and all vertices are positive-critical; that is,
\[
\mu_{ji}>0\quad\text{for every }j\to i,
\qquad
\sum_{j\to i}\mu_{ji}=1\quad\text{for every }i\in\VV.
\]
We also assume that $c_{ji}=0$ on every reinforcement edge
and that $\epsilon_t(i)=0$ for every $i\in\VV$ and $t\geq0$.

\

Throughout this section we restrict attention to graphs that admit a GPA partition and  prove almost sure convergence and describe the structure of the random limiting vector. A special case of this for interacting urn processes was studied in \cite{kaur2025interacting}. The random limit structure conjectured there becomes a corollary of our results.

Recall that, $\PP=\{i:q_i=1\}$ and $\DD=\{i:q_i=0\}$. Since $W=I$, the relevant partition is the original one-step graph partition algorithm of \cite{kaur2025interacting}. Fix such a partition and write $ \PP=P_1\cup P_2,\DD=D_1\cup D_2$, where the four sets $P_1,P_2,D_1,D_2$ are mutually disjoint. We call these four sets blocks. Let
$p_1=|P_1|$, $p_2=|P_2|$, $d_1=|D_1|$, and $d_2=|D_2|$. Without loss of generality, label the vertices so that the first $p_1$ vertices belong to $P_1$, the next $p_2$ vertices belong to $P_2$, the next $d_1$ vertices belong to $D_1$, and the last $d_2$ vertices belong to $D_2$. For a matrix $B$ and subsets $U_1,U_2\subseteq[N]$, we write $B_{U_1,U_2}$ for the submatrix of $B$ obtained by restricting the row indices to $U_1$ and the column indices to $U_2$. We write $B_U$ for $B_{U,U}$. For graphs that admit a partition, the  adjacency matrix has the following form:
\begin{equation}\label{adjacency-partition}
    A = \begin{pmatrix}
        A_{P_1,P_1} &0 & 0 & A_{P_1, D_2}\\
        0 & A_{P_2,P_2} & A_{P_2, D_1} & 0\\
        A_{D_1,P_1} &0 & 0 & A_{D_1, D_2}\\
        0 & A_{D_2,P_2} & A_{D_2, D_1} & 0
    \end{pmatrix}.
\end{equation}
With this labelling, we can write
\begin{equation}\label{eq:block_Q}
    Q=
\begin{pmatrix}
I_{\mathcal{P}} & \bzero\\
\bzero & -I_{\mathcal{D}}
\end{pmatrix},
\end{equation}
where \(I_{\mathcal{P}}\) and \(I_{\mathcal{D}}\) denote the identity matrices on the
coordinates indexed by \(\PP\) and \(\DD\), respectively, and \(\bzero\)
denotes a zero block of the appropriate size.  In this case the sampling indicators defined in \eqref{eq:sampling_indicator} reduces to
\begin{align}\label{chi}
\mathbb P(\I_{t+1}(i)=1\mid\FF_t)
=
\begin{cases}
Z_t(i), & i\in\PP,\\
1-Z_t(i), & i\in\DD.
\end{cases}
\end{align}
\noindent
and under $\mathrm{\textbf{(A2)}}$ the recursion \eqref{eq:gen_recur} can be written as 
\begin{eqnarray}
\label{noderecurnew}
Z_{t+1}(i)
&=& Z_t(i)\bigg(1-\frac{1}{t+1}\bigg)
+ \frac{1}{t+1} \bigg( \sum_{\substack{j\in N(i)\cap\mathcal P}} \mu_{ji}\I_{t+1}(j) + \sum_{\substack{j\in N(i)\cap\mathcal D}} \mu_{ji}\I_{t+1}(j) \bigg).
\end{eqnarray}

Two extreme cases are worth noting. If $\DD=\emptyset$, then every vertex is preferential and positive-critical, and the graph forms a single strongly connected component. At the opposite extreme, if $\PP=\emptyset$ and the graph admits a GPA partition, then the partition reduces to a bipartite structure, with all directed edges running between the two parts and no directed edges within either part.

Throughout this section, for any non-empty set $U\subseteq\VV=[N]$, define  $Z_t^U = \sum_{i\in U} Z_t(i)$ and $\bar{Z}_t^U = \frac{1}{|U|} Z_t^U$. We call $\bar{Z}_t^U$ the mean state variable in the set $U$. For any $i \in [N]$ and $U\subseteq \VV$, define
$d^{out}_{i,U}=\sum_{j\in U}\mathbf{1}_{\{i\to j\}}$ and
$d^{in}_{i,U}=\sum_{j\in U}\mathbf{1}_{\{j\to i\}}$. Thus, for every $i$,
$d^{out}_{i,\mathcal{D}}+d^{out}_{i,\mathcal{P}}=d_i^{out}$ and
$d^{in}_{i,\mathcal{D}}+d^{in}_{i,\mathcal{P}}=d_i^{in}$. Finally, for $U\subseteq[N]$, let $\bone_U$ denote the row indicator vector of $U$,
that is, $(\bone_U)_i=1$ if $i\in U$ and $(\bone_U)_i=0$ otherwise. We write
$\bone=\bone_{[N]}$ for the all-one row vector.

\subsection{Block-Level Dynamic}\label{sec:mean_fractions}

\subsubsection{Limiting Block Averages}
In this section, we explore how the limiting mean fractions of the four blocks are related. 
We require the following balance condition.

\begin{itemize}

 \item[{\bf (B)}] {\bf Uniform outgoing reinforcement:} there exist
$\mu^{out}_{\mathcal{P}}\geq 0$ and $\mu^{out}_{\mathcal{D}}\geq 0$ such that
\begin{equation}\label{eq:balance_condition2}
    \widetilde A\bone_{\mathcal{P}}^{\top}
=
\mu^{out}_{\mathcal{P}}\bone^{\top}
\quad \text{and} \quad
\widetilde A\bone_{\mathcal{D}}^{\top}
=
\mu^{out}_{\mathcal{D}}\bone^{\top}. 
\end{equation}

\end{itemize}
Condition {\bf (B)} guarantees that the total reinforcement sent toward preferential and de-preferential vertices depends only on the type of the sender and not on the specific vertex. 
\begin{theorem}\label{thm:AS_average}
Assume {\bf (B)} holds and
$\mu^{out}_{\mathcal P}-\mu^{out}_{\mathcal D}<1$. Then, as $t\to\infty$,
\begin{equation}
\big(
Z_t^{\mathcal P} , 
Z_t^{\mathcal D}
 \big)
\xrightarrow{a.s.}
\frac{d_1+d_2}
{\mu^{out}_{\mathcal D}-\mu^{out}_{\mathcal P}+1}
\big(
\mu^{out}_{\mathcal P}, 
\mu^{out}_{\mathcal D}
\big).
\end{equation}
Further, $\left( Z_t^{P_1}+Z_t^{D_1}, Z_t^{P_1}+Z_t^{P_2}, Z_t^{D_1}+Z_t^{D_2} \right)  and  \left( Z_t^{P_2}+Z_t^{D_2}, Z_t^{P_1}+Z_t^{P_2}, Z_t^{D_1}+Z_t^{D_2} \right)$ converge almost surely to $\mu^{out}(d_1+d_2)(1,1,1)$ and $Z_t^{P_1},Z_t^{P_2},Z_t^{D_1}$ and $Z_t^{D_2}$ admit almost sure limits, provided $\mu^{out}_{\mathcal P} =\mu^{out}_{\mathcal D} = \mu^{out}$.

\end{theorem}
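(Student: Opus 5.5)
The plan is to work with aggregate statistics rather than the full vector. Under (A2) we have $W=I$, so \eqref{eq:main_recur_vec} reads
\[
\Z_{t+1}=\Z_t+\tfrac1{t+1}\bigl[\Z_t(Q\widetilde A-I)+C+\Delta M_{t+1}\bigr],
\qquad C_i=\sum_{j\in N(i)\cap\DD}\mu_{ji}.
\]
Right-multiplying by $\bone_{\PP}^\top$ and by $\bone_{\DD}^\top$ gives recursions for $Z^{\PP}_t$ and $Z^{\DD}_t$. By {\bf (B)}, $\Z_tQ\widetilde A\bone_{\PP}^\top=\mu^{out}_{\PP}\,\Z_tQ\bone^\top=\mu^{out}_{\PP}(Z^{\PP}_t-Z^{\DD}_t)$. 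Likewise
\[
C\bone_{\PP}^\top=\sum_{j\in\DD}(\widetilde A\bone_{\PP}^\top)_j=(d_1+d_2)\mu^{out}_{\PP},
\]
and the analogous identities hold with $\DD$ in place of $\PP$. Hence $(x_t,y_t)=(Z^{\PP}_t,Z^{\DD}_t)$ satisfies an autonomous two-dimensional stochastic approximation scheme with bounded martingale increments:
\[
\dot x=\mu^{out}_{\PP}(x-y+d)-x,\qquad \dot y=\mu^{out}_{\DD}(x-y+d)-y,\qquad d=d_1+d_2 .
\]

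For the first claim, I will check that this $2\times 2$ drift matrix is Hurwitz. Its trace is $\mu^{out}_{\PP}-\mu^{out}_{\DD}-2<0$ and its determinant is $1-\mu^{out}_{\PP}+\mu^{out}_{\DD}>0$, both by the hypothesis $\mu^{out}_{\PP}-\mu^{out}_{\DD}<1$. So the ODE has a unique globally asymptotically stable equilibrium. To compute it, set $s=x-y+d$. The equilibrium equations give $x=\mu^{out}_{\PP}s$ and $y=\mu^{out}_{\DD}s$, so $s=(\mu^{out}_{\PP}-\mu^{out}_{\DD})s+d$. Solving, $s=d/(\mu^{out}_{\DD}-\mu^{out}_{\PP}+1)$, which is exactly the stated limit. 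Lemma~\ref{lem:sas} then gives almost sure convergence, exactly as in the proof of Theorem~\ref{thm:deterministic_limit}.

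For the second part, assume $\mu^{out}_{\PP}=\mu^{out}_{\DD}=\mu^{out}$. Then $Z^{\PP}_t\to\mu^{out}d$ and $Z^{\DD}_t\to\mu^{out}d$, and in particular $Z^{\PP}_t-Z^{\DD}_t\to0$. Next I will use the block form \eqref{adjacency-partition}. It says that within $\PP$, a sender in $P_1$ reinforces only $P_1$ and a sender in $P_2$ only $P_2$, and similarly senders in $\DD$ reinforce only one of $D_1,D_2$ according to their block. Combining this with {\bf (B)}, the vector $\widetilde A\bone_{P_1\cup D_1}^\top$ (or its correct-parity analogue) takes the value $\mu^{out}$ on one pair of blocks and $0$ on the other. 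The drift of $Z^{P_1}_t+Z^{D_1}_t$ therefore becomes $-(Z^{P_1}_t+Z^{D_1}_t)+\mu^{out}\times(\text{a signed block sum})+\text{const}$. I then need to identify this signed block sum as a combination of $Z^{\PP}_t-Z^{\DD}_t$ and the block sum itself, whose constant yields the fixed point $\mu^{out}d$. With the already-established convergence plugged in as a vanishing perturbation $\epsilon_t$, Lemma~\ref{lem:sas} applied to this scalar recursion gives $Z^{P_1}_t+Z^{D_1}_t\to\mu^{out}d$. The complementary sum then follows from $Z^{\PP}_t+Z^{\DD}_t\to2\mu^{out}d$. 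Getting the index conventions (source versus target) right here needs care.

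The main obstacle is existence of limits for the individual blocks. The three converging sums leave one degree of freedom, which corresponds to the neutral eigendirection of the drift (eigenvalue $0$ of $Q\widetilde A-I$ on a sign vector as in Theorem~\ref{thm:GPA}). I would take the left null vector $\sigma$ with entries $\pm1$ on blocks, for instance $V_t=Z^{P_1}_t-Z^{P_2}_t$ (or $Z^{P_1}_t-Z^{D_2}_t$). I would then show that its drift is a linear combination of $X_t$-type quantities that are already converging, namely $Z^{\PP}_t-Z^{\DD}_t$ and the block sums minus their limits. This gives $V_{t+1}=V_t+\frac1{t+1}(e_t+\Delta\widetilde M_{t+1})$. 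The martingale part $\sum\frac{1}{t+1}\Delta\widetilde M_{t+1}$ converges since it is $L^2$-bounded. For the drift part I need $\sum_t e_t/(t+1)<\infty$, and I would obtain this from the rates of Theorem~\ref{thm:projected_martingale_rates}, applied via the subsequent remark to the converging two-dimensional and block-sum recursions. This is valid because all their relevant eigenvalues have real part strictly below $1$, so $e_t=O(t^{-\delta})$ for some $\delta>0$ up to log factors. Then $V_t$ converges almost surely, and the four block sums are recovered by solving the resulting invertible linear system.
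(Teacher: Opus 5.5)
Your first two steps are correct and match the paper's argument. The paper writes the four block recursions and sums them to get the same two-dimensional linear system, with eigenvalues $-1$ and $\mu^{out}_{\mathcal P}-\mu^{out}_{\mathcal D}-1$. For the pair sums it uses the closed three-dimensional system for $(Z^{P_1}_t+Z^{D_1}_t,Z^{\mathcal P}_t,Z^{\mathcal D}_t)$. Your derivation straight from {\bf (B)} via $\bone_{\mathcal P}^\top,\bone_{\mathcal D}^\top$ is equivalent and slightly leaner, and so is your scalar recursion with a vanishing perturbation.

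One correction there: $\widetilde A\bone_{P_1}^\top=\mu^{out}_{\mathcal P}\bone_{P_1\cup D_1}^\top$ and $\widetilde A\bone_{D_1}^\top=\mu^{out}_{\mathcal D}\bone_{P_2\cup D_2}^\top$. So under equality $\widetilde A\bone_{P_1\cup D_1}^\top=\mu^{out}\bone^\top$ is constant; it is not $\mu^{out}$ on one pair of blocks and $0$ on the other. This is exactly why the drift is $-(Z^{P_1}_t+Z^{D_1}_t)+\mu^{out}(Z^{\mathcal P}_t-Z^{\mathcal D}_t)+\mu^{out}(d_1+d_2)$, with no further identification needed.

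For the individual block sums you take a genuinely different and heavier route. The paper uses the very sign vector you mention, $S=\bone_{P_1\cup D_2}^\top-\bone_{P_2\cup D_1}^\top$. One checks that $(Q\widetilde A-I)S=0$ and $\bone_{\mathcal D}\widetilde AS=d_1-d_2$. Moreover $d_1=d_2$ is forced: column sums give $p_1=\mu^{out}_{\mathcal P}(p_1+d_1)$ and $d_2=\mu^{out}_{\mathcal D}(p_1+d_1)$, and $\mu^{out}_{\mathcal P}+\mu^{out}_{\mathcal D}=1$. Hence $\Z_tS$ is a bounded martingale and converges with no rate estimates at all.

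Your substitute $V_t=Z^{P_1}_t-Z^{P_2}_t$ does work, since its drift is $-\tfrac12[(Z^{P_1}_t+Z^{D_1}_t)-(Z^{P_2}_t+Z^{D_2}_t)]+\tfrac12(d_1-d_2)$. But you still need two things:
\begin{enumerate}
\item The constant must vanish. This is the same fact $d_1=d_2$, which your sketch does not address.
\item You need a summable almost sure rate, and Theorem~\ref{thm:projected_martingale_rates} does not apply as cited. It is stated for the model's own $\Z_t$, whose drift matrix is singular here, and it assumes $B$ diagonalizable. Your reduced $2\times2$ and $3\times3$ drift matrices have the eigenvalue $-1$ with a nontrivial Jordan block. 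You would have to redo the Euler-product estimate (an extra $\log t$ appears, which is harmless) or use an $L^2$ bound.
\end{enumerate}

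Finally, your alternative $Z^{P_1}_t-Z^{D_2}_t$ fails. It equals $(Z^{P_1}_t+Z^{D_1}_t)-(Z^{D_1}_t+Z^{D_2}_t)$, so together with the three converging sums it gives a singular linear system.
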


\begin{corollary}\label{cor:Z_limit}
Assume {\bf (B)} holds, and suppose
$\mu^{out}_{\mathcal P}=\mu^{out}_{\mathcal D}$.
Write $\bar Z_t=N^{-1}\sum_{i=1}^N Z_t(i)$.
Then, as
$t\to\infty$, $\bar Z_t\xrightarrow{a.s.}1/2$, provided
$p_1+p_2=d_1+d_2$. Moreover, if $p_1=p_2=d_1=d_2$, then, as $t\to\infty$,
\[
\left(\bar Z_t^{P_1}+\bar Z_t^{D_1},
\bar Z_t^{P_1}+\bar Z_t^{P_2},
\bar Z_t^{D_1}+\bar Z_t^{D_2},
\bar Z_t^{P_2}+\bar Z_t^{D_2} \right)
\xrightarrow{a.s.}
(1,1,1,1).
\]
\end{corollary}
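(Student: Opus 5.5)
This corollary is essentially a rescaling of Theorem~\ref{thm:AS_average}. The plan is to apply that theorem with $\mu^{out}_{\mathcal P}=\mu^{out}_{\mathcal D}=\mu^{out}$ and then divide by the block sizes. Since the two outgoing constants coincide, the hypothesis $\mu^{out}_{\mathcal P}-\mu^{out}_{\mathcal D}=0<1$ holds automatically, so Theorem~\ref{thm:AS_average} gives
\[
(Z_t^{\mathcal P},Z_t^{\mathcal D})\xrightarrow{a.s.}(d_1+d_2)(\mu^{out},\mu^{out}).
\]

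The key step is to identify $\mu^{out}$ from the model constraints. Under (A2), positive-criticality gives $\sum_j \mu_{ji}=1$ for every $i$, so $\bone\widetilde A=\bone$. Condition (B) gives $\widetilde A\bone^\top=\widetilde A(\bone_{\mathcal P}+\bone_{\mathcal D})^\top=2\mu^{out}\bone^\top$. Pairing the two identities,
\[
\bone\widetilde A\bone^\top=N \quad\text{and}\quad \bone\widetilde A\bone^\top=2\mu^{out}N,
\]
so $\mu^{out}=1/2$.

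For the first claim, summing the two limits gives $\sum_i Z_t(i)\to (d_1+d_2)$ almost surely. Dividing by $N=p_1+p_2+d_1+d_2$ and using $p_1+p_2=d_1+d_2$ yields $\bar Z_t\to 1/2$. The main thing to verify is that $\mu^{out}=1/2$ is forced, which comes from the double-counting above. One should also check that the limit in Theorem~\ref{thm:AS_average} is consistent with $Z_t^{\mathcal P}\to (d_1+d_2)/2$; with $|\mathcal P|=|\mathcal D|$ this gives block averages equal to $1/2$, as expected.

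For the second claim, the second part of Theorem~\ref{thm:AS_average} gives that $Z_t^{P_1}+Z_t^{D_1}$, $Z_t^{P_1}+Z_t^{P_2}$, $Z_t^{D_1}+Z_t^{D_2}$ and $Z_t^{P_2}+Z_t^{D_2}$ all converge almost surely to $\mu^{out}(d_1+d_2)=(d_1+d_2)/2$. Set $p_1=p_2=d_1=d_2=m$. Each pairwise sum $\bar Z_t^{U}+\bar Z_t^{V}$ then equals $(Z_t^U+Z_t^V)/m$, which converges to $(2m)/(2m)=1$, giving the vector $(1,1,1,1)$. The only potential obstacle is bookkeeping: confirming that the normalization by $m$ is the same for each block, which holds precisely because all four block sizes are equal.
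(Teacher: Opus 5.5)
Your proposal is correct and follows essentially the same route as the paper: apply Theorem~\ref{thm:AS_average} with $\mu^{out}_{\mathcal P}=\mu^{out}_{\mathcal D}=1/2$, sum the two aggregate limits to get $\bar Z_t\to(d_1+d_2)/N=1/2$, and rescale the pairwise block-sum limits $(d_1+d_2)/2=m$ by the common block size $m$. The value $1/2$ is forced by $\bone\widetilde A=\bone$, exactly as in Lemma~\ref{lem:GPA_structure}; incidentally, the same double-counting with $\bone_{\mathcal P}$ gives $N\mu^{out}_{\mathcal P}=p_1+p_2$, so the proviso $p_1+p_2=d_1+d_2$ is automatic once $\mu^{out}_{\mathcal P}=\mu^{out}_{\mathcal D}$.
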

The above corollary shows that, in the equal-size case, if
$\bar Z_t^{P_1}$ converges almost surely to a random variable $Z_\infty$ as
$t\to\infty$, then $\bar Z_t^{P_2}$ and $\bar Z_t^{D_1}$ converge almost surely
to $1-Z_\infty$, while $\bar Z_t^{D_2}$ converges almost surely to
$Z_\infty$.

\subsubsection{Within-Block Synchronization}

In this section, we study synchronization within the four blocks of the GPA
partition. Write $\mathcal B=\{P_1,P_2,D_1,D_2\}$. Under additional conditions,
we show that, for every $U\in\mathcal B$, $Z_t(i)-\bar Z_t^U\to 0$ almost
surely for all $i\in U$. We impose the following conditions. 
\begin{itemize}
\item[{\bf(C)}] {\bf Constant-block action:} For every $U\in\mathcal B$, we assume
that $\bone_U \widetilde A \in \operatorname{span}\{\bone_V:V\in\mathcal B\}$. That is,
there exist constants $\mu_{UV}\geq 0$ such that
$\bone_U \widetilde A =\sum_{V\in\mathcal B} \mu_{UV}\bone_V$ for every $U\in\mathcal B$.
\item[{\bf (D)}] {\bf Diagonalizability:} The matrix $Q\widetilde A$ is diagonalizable. That is, there exists a matrix $P$ such that $I-  Q\widetilde A = P \Lambda P^{-1}$ where $\Lambda = Diag (\nu_1, \dots, \nu_N)$. Note that the rows of $P^{-1}$ are the corresponding left eigenvectors of $I-  Q \widetilde A$, denoted by $v_1, v_2, \dots, v_N$.
\end{itemize}
When $W=I$, assumption (D) is the same diagonalizability
requirement imposed in Section~\ref{sec:LILCLT}.
We state it explicitly here because that earlier
assumption applied only to the fluctuation results.
Define the block-averaging projection
\[
\tilde J
=
\sum_{\substack{U\in\mathcal B\\ |U|>0}}
\frac{1}{|U|}\bone_U^\top\bone_U,
\]
and put
\[
\hat v=\bone_{P_1\cup D_2}-\bone_{P_2\cup D_1}.
\]
Throughout, sums involving inverse block sizes are taken
over nonempty blocks.
An immediate consequence of constant-block action is
the following. \begin{lemma}\label{lem:commute}
Assume {\rm \bf(B)} and {\rm \bf(C)} hold. Then $\tilde J$ commutes with $Q\widetilde A$.
\end{lemma}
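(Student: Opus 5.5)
The plan is to reduce the commutation of $\tilde J$ with $Q\widetilde A$ to the commutation of $\tilde J$ with $\widetilde A$ alone, and then to verify the latter entrywise using block constancy together with a double-counting identity.

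\textbf{Reduction to $\widetilde A$.} With the block labelling used in \eqref{eq:block_Q}, $Q$ is $+I$ on $P_1\cup P_2$ and $-I$ on $D_1\cup D_2$. Hence $Q$ is a scalar multiple of the identity on each block $U\in\mathcal B$. Since $\tilde J$ is block diagonal, with block $\frac1{|U|}\bone_U^\top\bone_U$ supported on $U\times U$, we get $Q\tilde J=\tilde JQ$. Because $Q$ is invertible ($Q^2=I$), it then suffices to prove $\tilde J\widetilde A=\widetilde A\tilde J$. Indeed, this gives $\tilde JQ\widetilde A=Q\tilde J\widetilde A=Q\widetilde A\tilde J$.

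\textbf{Left multiplication.} For $i\in U$, the $i$-th row of $\tilde J\widetilde A$ is $\frac1{|U|}\bone_U\widetilde A$. By {\bf (C)} this equals $\frac1{|U|}\sum_V\mu_{UV}\bone_V$. So the $(i,k)$ entry, for $i\in U$ and $k\in V$, is $\mu_{UV}/|U|$.

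\textbf{Right multiplication.} We have $\widetilde A\tilde J=\sum_V\frac1{|V|}(\widetilde A\bone_V^\top)\bone_V$, so we need each column-block sum $\widetilde A\bone_V^\top$ to be constant on blocks. This is where {\bf (B)} and the partition structure \eqref{adjacency-partition} enter. The rows $j$ with $\widetilde A_{ji}\neq0$ for some $i\in P_1$ (the senders into $P_1$) lie in $P_1\cup D_1$, while the senders into $P_2$ lie in $P_2\cup D_2$. These supports are disjoint. Therefore the identity $\widetilde A\bone_{P_1}^\top+\widetilde A\bone_{P_2}^\top=\mu^{out}_{\mathcal P}\bone^\top$ from {\bf (B)} splits as
\[
\widetilde A\bone_{P_1}^\top=\mu^{out}_{\mathcal P}\bone_{P_1\cup D_1}^\top,\qquad \widetilde A\bone_{P_2}^\top=\mu^{out}_{\mathcal P}\bone_{P_2\cup D_2}^\top .
\]
In the same way, the senders into $D_2$ lie in $P_1\cup D_1$ and the senders into $D_1$ lie in $P_2\cup D_2$, which gives
\[
\widetilde A\bone_{D_2}^\top=\mu^{out}_{\mathcal D}\bone_{P_1\cup D_1}^\top,\qquad \widetilde A\bone_{D_1}^\top=\mu^{out}_{\mathcal D}\bone_{P_2\cup D_2}^\top .
\]
Hence $(\widetilde A\bone_V^\top)_i=c_{UV}$ depends only on the block $U\ni i$, and the $(i,k)$ entry of $\widetilde A\tilde J$ is $c_{UV}/|V|$.

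\textbf{Matching the entries.} Compute the scalar $\bone_U\widetilde A\bone_V^\top$ in two ways. Using {\bf (C)} it equals $\mu_{UV}|V|$; using the column computation above it equals $c_{UV}|U|$. Thus $\mu_{UV}/|U|=c_{UV}/|V|$, which is exactly the equality of the two entries. Empty blocks contribute nothing to either side.

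The main obstacle is the middle step. The support bookkeeping from \eqref{adjacency-partition} must be read carefully, since $\widetilde A_{ji}$ has sender index $j$ and receiver index $i$. Once the four column-block identities are in place, the remaining steps are routine.
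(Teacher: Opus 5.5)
Your proof is correct and follows essentially the same route as the paper: compute $\tilde J Q\widetilde A$ and $Q\widetilde A\tilde J$ blockwise, then match $\mu_{UV}/|U|$ with the row-sum constant divided by $|V|$ by double counting $\bone_U\widetilde A\bone_V^\top$. The differences are minor. You factor out $Q$ via $Q\tilde J=\tilde JQ$ rather than carrying the signs $\sigma_U$, and you explicitly derive the blockwise constancy of $\widetilde A\bone_V^\top$ from {\bf (B)} and the support pattern in \eqref{adjacency-partition}, a step the paper only asserts.
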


\begin{proof}
For $U\in\mathcal B$, let
$\sigma_U=\begin{cases}1,& U=P_1\text{ or }P_2,\\ -1,& U=D_1\text{ or }D_2.\end{cases}$
Since $\bone_UQ=\sigma_U\bone_U$,  {\rm \bf (C)} gives 
$\bone_UQ\widetilde{A}=\sigma_U\bone_U\widetilde{A}
=\sigma_U\sum_{V\in\mathcal B}\mu_{UV}\bone_V$. Recall that
$\tilde J=\sum_{U\in\mathcal B}|U|^{-1}\bone_U^\top\bone_U$. Hence,
\[
\tilde JQ\widetilde{A}
=
\sum_{U,V\in\mathcal B}
\frac{\sigma_U \mu_{UV}}{|U|}\bone_U^\top\bone_V.
\]
Next, we compute $Q \widetilde{A} \tilde J$. Since
$\tilde J=\sum_{V\in\mathcal B}|V|^{-1}\bone_V^\top\bone_V$, we have
\[
Q \widetilde{A} \tilde J
=
\sum_{V\in\mathcal B}
\frac{1}{|V|}
(Q \widetilde{A} \bone_V^\top)\bone_V.
\]
For $i\in U$, the $i$-th entry of $Q \widetilde{A} \bone_V^\top$ equals
$\sigma_U \sum_{\substack{j:\,i\to j\\ j\in V}}\mu_{ij}$. By {\rm \bf (B)} and the GPA block structure,
$\sum_{\substack{j:\,i\to j\\ j\in V}}\mu_{ij}$ is constant over $i\in U$; denote this constant by
$r_{UV}$. Therefore
\[
Q \widetilde{A} \tilde J
=
\sum_{U,V\in\mathcal B}
\frac{\sigma_Ur_{UV}}{|V|}\bone_U^\top\bone_V.
\]
It remains to compare the constants in the two expressions. The total mass in
the $U,V$ block of $\widetilde A$ is $\sum\limits_{\substack{i\in U,\ j\in V\\ i\to j}}\mu_{ij}$. By {\rm \bf (C)}, each column in $V$ receives weight $\mu_{UV}$ from rows in $U$, so
this total is $|V|\mu_{UV}$. By the definition of $r_{UV}$, each row in $U$
sends weight $r_{UV}$ to columns in $V$, so the same total is
$|U|r_{UV}$. Hence $|V|\mu_{UV}=|U|r_{UV}$, or equivalently
$\mu_{UV}/|U|=r_{UV}/|V|$. The two displayed expressions are therefore equal, and
so $\tilde JQ\widetilde{A}=Q \widetilde{A} \tilde J$.
\end{proof}
We now state the main convergence theorem of this section. 
\begin{theorem}\label{thm:randomlimit}
Suppose that {\bf (B)}, {\bf (C)}, and {\bf (D)}
hold and that $p_1+p_2=d_1+d_2$. Then $\Z_t(I-\tilde J)\to{\bf 0}$ almost surely as $t\to\infty$.
\end{theorem}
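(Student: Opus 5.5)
We will study the projected process $\mathbf X_t:=\Z_t(I-\tilde J)$. Under (A2) the recursion \eqref{eq:main_recur_vec} reads
\[
\Z_{t+1}=\Z_t+\frac{1}{t+1}\bigl[-\Z_t(I-Q\widetilde A)+C+\Delta M_{t+1}\bigr],
\]
with $C_i=\sum_{j\in N(i)\cap\DD}\mu_{ji}$.

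Since $\tilde J$ is a projection commuting with $Q\widetilde A$ (Lemma~\ref{lem:commute}), right-multiplying by $I-\tilde J$ gives
\[
\mathbf X_{t+1}=\mathbf X_t+\frac{1}{t+1}\bigl[-\mathbf X_t(I-Q\widetilde A)+C(I-\tilde J)+\Delta M_{t+1}(I-\tilde J)\bigr].
\]
The first step is to show $C(I-\tilde J)=\bzero$, that is, that $C$ is constant on each block. Under the GPA structure \eqref{adjacency-partition}, a vertex $i$ receives de-preferential input only from a single block ($D_2$ if $i\in P_1\cup D_1$, $D_1$ if $i\in P_2\cup D_2$). Therefore $C=\sum_{U\in\{D_1,D_2\}}\bone_U\widetilde A$, which by (C) lies in $\operatorname{span}\{\bone_V\}$. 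Hence $C\tilde J=C$, and the recursion for $\mathbf X_t$ is a linear stochastic approximation with drift $-(I-Q\widetilde A)$ restricted to the invariant subspace $\operatorname{Range}(I-\tilde J)$. This subspace is invariant under right multiplication by $Q\widetilde A$ because of the commutation.

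The second step, which we expect to be the main obstacle, is spectral. We must show that every eigenvalue $\nu$ of $I-Q\widetilde A$ whose left eigenvector lies in $\operatorname{Range}(I-\tilde J)$ has $\operatorname{Re}\nu>0$. Using (D) and the commutation, we can choose the eigenbasis $v_1,\dots,v_N$ so that each $v_r$ lies either in $\operatorname{Range}\tilde J$ or in $\operatorname{Range}(I-\tilde J)$, since commuting diagonalizable operators are simultaneously diagonalizable. By the norm bound in the proof of Theorem~\ref{thm:deterministic_limit}, every eigenvalue of $Q\widetilde A$ has modulus at most $1$, so $\operatorname{Re}\nu\ge 0$, with $\operatorname{Re}\nu=0$ only if $\nu=0$. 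It therefore remains to show that the eigenvalue $1$ of $Q\widetilde A$ has its eigenspace inside $\operatorname{Range}\tilde J$.

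For this we rerun the maximum-modulus argument of Theorem~\ref{thm:GPA} with $W=I$. If $v=vQ\widetilde A$, take $l$ with $|v_l|$ maximal. Positive-criticality and strong connectivity propagate equality $|v_j|=|v_l|$ to all vertices, with the sign rule $v_k=(2q_j-1)v_j$ along every edge $j\to k$. This forces $v$ to be a multiple of $\hat v$, and $\hat v\in\operatorname{Range}\tilde J$. Although $v$ may be complex, the phases are forced to agree, so the same conclusion holds. Diagonalizability excludes Jordan blocks at $\nu=0$, so the eigenvalue-$1$ eigenspace is exactly this one-dimensional space (and $\hat v$ indeed satisfies $\hat v Q\widetilde A=\hat v$ by the GPA pattern). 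The hypothesis $p_1+p_2=d_1+d_2$ we expect to use only to keep $\hat v$ consistent with the fixed point of the block dynamics (Corollary~\ref{cor:Z_limit}); it plays no essential role in the projected linear part.

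The final step is convergence. Write $Y_t(r)=\mathbf X_t v_r^{\top}$-type coordinates, meaning the coefficients of $\mathbf X_t$ in the eigenbasis restricted to $\operatorname{Range}(I-\tilde J)$. Each coordinate satisfies the scalar recursion
\[
y_{t+1}=y_t\Bigl(1-\frac{\nu_r}{t+1}\Bigr)+\frac{\xi_{t+1}}{t+1},
\]
where $\xi_{t+1}$ is a bounded martingale difference and $\operatorname{Re}\nu_r>0$. Since $\sum_t t^{-2}<\infty$, the martingale $\sum_t \xi_{t+1}/(t+1)$ converges almost surely. A standard Kronecker/Gronwall argument, or Lemma~\ref{lem:sas} applied to the ODE $\dot x=-x(I-Q\widetilde A)$ on the invariant subspace, whose unique globally asymptotically stable point is $\bzero$, then gives $y_t\to0$ almost surely. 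Hence $\Z_t(I-\tilde J)\to\bzero$.
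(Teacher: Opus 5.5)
Your proof is correct, but it takes a genuinely different route from the paper. The paper first shows $\Z_t(I-\tilde J)\to\bzero$ in $\mathcal L^2$ via the mean and variance estimates of Proposition~\ref{prop:Var}. It then obtains almost sure convergence from Theorem~\ref{thm:QM1}, that is, from the almost sure convergence of the whole vector $\Z_t$, which it proves by a quasimartingale argument. The hypothesis $p_1+p_2=d_1+d_2$ enters only at that stage, through Lemma~\ref{lem:limit_Z} and Theorem~\ref{thm:AS_average}, where it forces $\mu^{out}_{\mathcal P}=\mu^{out}_{\mathcal D}=1/2$.

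You instead note that $C=\bone_{\mathcal D}\widetilde A$ is block-constant by (C). Together with Lemma~\ref{lem:commute}, this makes the projected process obey an autonomous linear stochastic approximation recursion on the invariant subspace $\{x: x\tilde J=0\}$. On that subspace $I-Q\widetilde A$ has spectrum in $\{\operatorname{Re}\nu>0\}$, because its only neutral left eigenvector $\hat v$ (your complex version of Lemma~\ref{lem:eigenvector}) is block-constant. A pathwise Kronecker-type argument then finishes.

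Your route is shorter and works directly almost surely. It also confirms your suspicion that $p_1+p_2=d_1+d_2$ is not needed for within-block synchronization; for instance, it covers the all-preferential doubly stochastic case, where $d_1+d_2=0$. Even (D) is dispensable, since Lemma~\ref{lem:sas} applied on the invariant subspace only needs the restricted drift to be Hurwitz. What the paper's heavier route buys is the almost sure convergence of $\Z_t$ itself, plus polynomial rates for the mean and variance of the projected process; your argument gives neither.

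One harmless slip: by \eqref{adjacency-partition}, vertices in $P_1\cup D_2$ receive their de-preferential input from $D_1$, and those in $P_2\cup D_1$ from $D_2$, not as you wrote. This does not matter, since $C=\bone_{D_1}\widetilde A+\bone_{D_2}\widetilde A$ and (C) already give $C\tilde J=C$.
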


The proof of the theorem relies on the spectral structure
of the matrix $Q\widetilde A-I$, stated in Lemma \ref{lem:eigenvector}. We also need the following results, which are proved in section~\ref{sec:Proofs_2}.

\begin{proposition}\label{prop:Var}
Suppose {\rm \bf (D)} holds. Let $U$ be an $N\times N$ matrix such that
$\mathbf{1}_{\mathcal D}\widetilde{A} U=0$, $Q\widetilde{A}U=UQ\widetilde{A}$, and $\hat vU=0$. Let
$\nu_1,\ldots,\nu_N$ be the eigenvalues of
$I-Q\widetilde{A}$, and set
$\nu^*:=\min\{\operatorname{Re}(\nu_i):\nu_i\neq 0\}>0$. Then
\begin{itemize}
    \item $\IE[\Z_tU]=\mathcal{O}\left(t^{-\nu^*}\right)$.
    \item $\operatorname{Var}(\Z_tU)=\mathcal{O}\left(t^{-\delta}\right)$ for some $\delta>0$.
\end{itemize}
\end{proposition}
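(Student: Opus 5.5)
The plan is to reduce everything to a linear recursion for $Y_t:=\Z_tU$ and then read off its mean and variance mode by mode in the eigenbasis supplied by {\bf (D)}. Under {\bf (A2)} we have $W=I$, $c_{ji}=0$ and $\epsilon_t=0$. The constant term in \eqref{eq:main_recur_vec} is then $C_i=\sum_j\mu_{ji}(1-q_j)=\sum_{j\in\DD}\mu_{ji}$, that is, $C=\bone_{\mathcal D}\widetilde A$. Multiplying \eqref{eq:main_recur_vec} on the right by $U$ and using the hypotheses $\bone_{\mathcal D}\widetilde AU=0$ and $Q\widetilde AU=UQ\widetilde A$ gives the homogeneous recursion
\[
Y_{t+1}=Y_t\Bigl(I-\tfrac{1}{t+1}(I-Q\widetilde A)\Bigr)+\tfrac{1}{t+1}\Delta M_{t+1}U .
\]
Writing $I-Q\widetilde A=P\Lambda P^{-1}$ and $\widetilde Y_t:=Y_tP$, the coordinates decouple:
\[
\widetilde Y_{t+1}(k)=\widetilde Y_t(k)\Bigl(1-\tfrac{\nu_k}{t+1}\Bigr)+\tfrac{1}{t+1}\Delta M_{t+1}Up_k ,
\]
where $p_k$ is the $k$-th column of $P$. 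Since $Y_t=\widetilde Y_tP^{-1}$, it suffices to bound the mean and variance of each $\widetilde Y_t(k)$.

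\emph{Step 1: the zero modes vanish.} Let $\Pi_0=\sum_{k:\nu_k=0}p_kv_k$ be the spectral projector of $I-Q\widetilde A$ onto the eigenvalue $0$. It is a polynomial in $Q\widetilde A$, so it commutes with $U$. I will invoke Lemma~\ref{lem:eigenvector}, which I expect to say that under the GPA structure the left null space of $I-Q\widetilde A$ is spanned by $\hat v$. Then $\Pi_0=p\,\hat v$ for a suitable column vector $p$, and
\[
Y_t\Pi_0=\Z_t\Pi_0U=\Z_t\,p\,\hat vU=0
\]
identically, because $\hat vU=0$. Hence $\widetilde Y_t(k)=0$ whenever $\nu_k=0$. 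This identification of the kernel is the main obstacle. If the lemma only gives $\hat v$ as one null vector, I would check the multiplicity separately. For that I would use the argument in the proof of Theorem~\ref{thm:GPA}: any null vector must have constant modulus along the one-step exploration, and on a strongly connected graph this forces it to be a multiple of $\hat v$.

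\emph{Step 2: the mean.} Every eigenvalue $\lambda$ of $Q\widetilde A$ satisfies $|\lambda|\le1$, since $\|Q\widetilde A\|_1\le1$. Therefore $\operatorname{Re}(1-\lambda)\ge0$, with equality only when $\lambda=1$, i.e. $\nu=0$; this shows $\nu^*>0$. Taking expectations kills the martingale term, so
\[
\IE\,\widetilde Y_t(k)=\widetilde Y_{t_0}(k)\prod_{u=t_0+1}^{t}\Bigl(1-\tfrac{\nu_k}{u}\Bigr).
\]
Here $t_0$ is chosen large enough that every factor is nonzero and the product is well defined. The product has modulus $\asymp t^{-\operatorname{Re}\nu_k}$, which gives $\IE[\Z_tU]=O(t^{-\nu^*})$. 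The initial segment $u\le t_0$ contributes only a bounded constant.

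\emph{Step 3: the variance.} Unrolling the recursion for $\nu_k\ne0$ gives
\[
\widetilde Y_t(k)-\IE\,\widetilde Y_t(k)=\sum_{s}a_{s,t}\,\tfrac{1}{s}\,\Delta M_sUp_k,\qquad |a_{s,t}|\asymp (s/t)^{\operatorname{Re}\nu_k}.
\]
The martingale differences are orthogonal and bounded, because $|\I|\le1$ and the $\mu_{ji}$ are bounded. Hence
\[
\IE\bigl|\widetilde Y_t(k)-\IE\,\widetilde Y_t(k)\bigr|^2\le C\,t^{-2\operatorname{Re}\nu_k}\sum_{s\le t}s^{2\operatorname{Re}\nu_k-2}=O\bigl(t^{-\min(1,2\operatorname{Re}\nu_k)}\log t\bigr).
\]
This is $O(t^{-\delta})$ for any $\delta<\min(1,2\nu^*)$. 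Transforming back by $P^{-1}$ gives $\operatorname{Var}(\Z_tU)=O(t^{-\delta})$.
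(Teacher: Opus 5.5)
Your proposal is correct and follows essentially the same route as the paper. You diagonalize $I-Q\widetilde A$ via {\bf (D)}, use Lemma~\ref{lem:eigenvector} together with $\hat vU=0$ to remove the zero mode, and bound the remaining modes by Euler-product estimates; this gives the same $t^{-\operatorname{Re}\nu_k}$ decay of the mean and the same trichotomy for the variance.

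The differences are cosmetic. You work pathwise with the projected recursion for $\Z_tU$, kill the zero mode through $Up_s=0$, and use martingale orthogonality. The paper instead iterates the law-of-total-variance recursion for $\operatorname{Var}(\Z_t)$, sandwiches with $U$, and kills the zero mode through $e_sP^{-1}U=0$. Two small fixes: write $\IE\widetilde Y_{t_0}(k)$ rather than $\widetilde Y_{t_0}(k)$ in Step~2, and in Step~3 keep the centred initial term at time $t_0$, which is harmless since it contributes $O(t^{-2\operatorname{Re}\nu_k})$ to the variance.
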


Before we show synchronization on each part of the partition, we prove that $\Z_t$ admits an almost sure limit. 
\begin{theorem}[Almost Sure Convergence of $\Z_t$]\label{thm:QM1} Suppose {\rm \bf (B)}, {\rm \bf (C)} and {\rm \bf (D)} hold and $p_1+p_2=d_1+d_2$. Then $\Z_t$ has an almost sure limit. \end{theorem}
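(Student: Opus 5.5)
Decompose $\Z_t=\Z_t\tilde J+\Z_t(I-\tilde J)$. By Theorem~\ref{thm:randomlimit}, $\Z_t(I-\tilde J)\to\bzero$ almost surely. It therefore suffices to show that the block-averaged part $\Z_t\tilde J$ converges almost surely. This vector is determined by the four block sums $Z_t^{P_1},Z_t^{P_2},Z_t^{D_1},Z_t^{D_2}$, since $(\Z_t\tilde J)_i=\bar Z_t^U$ for $i\in U$. The plan is to reduce everything to a single scalar martingale direction.

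\textbf{Step 1: exact linear relations among the block sums.} Under {\bf (B)}, {\bf (C)} and $p_1+p_2=d_1+d_2$, Theorem~\ref{thm:AS_average} gives almost sure convergence of $Z_t^{\PP}$, $Z_t^{\DD}$, $Z_t^{P_1}+Z_t^{D_1}$ and $Z_t^{P_2}+Z_t^{D_2}$ to deterministic constants. The one remaining degree of freedom is $\hat v\Z_t^\top$, which equals $Z_t^{P_1}+Z_t^{D_2}-Z_t^{P_2}-Z_t^{D_1}$. If Theorem~\ref{thm:AS_average} already yields each block limit, as its final clause asserts in the case $\mu^{out}_{\mathcal P}=\mu^{out}_{\mathcal D}$, we simply cite it. Otherwise we treat the $\hat v$ direction directly in Step 2.

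\textbf{Step 2: the $\hat v$ direction is a bounded martingale.} Lemma~\ref{lem:eigenvector} should show that $\hat v^\top$ spans the kernel of $Q\widetilde A-I$ acting on column vectors, that is, $(Q\widetilde A-I)\hat v^\top=0$. This is exactly the sign-vector computation in the proof of Theorem~\ref{thm:GPA}, with $W=I$: for a positive-critical graph with a GPA partition, $\sum_j (Q\widetilde A)_{ij}\hat v_j=\hat v_i$. From \eqref{eq:main_recur_vec} with $W=I$, $\epsilon\equiv0$ and $C$ the constant vector, we get
\[
\Z_{t+1}\hat v^\top=\Z_t\hat v^\top+\frac1{t+1}\bigl[\Z_t(Q\widetilde A-I)\hat v^\top+C\hat v^\top+\Delta M_{t+1}\hat v^\top\bigr].
\]
The drift term vanishes. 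For the constant, $C_i=\sum_{j\in N(i)\cap\DD}\mu_{ji}$, so $C=\bone_{\DD}\widetilde A$ and $C\hat v^\top=\bone_{\DD}\widetilde A\hat v^\top=\bone_{\DD}Q\hat v^\top\cdot(-1)$ after using $Q\widetilde A\hat v^\top=\hat v^\top$ and $\bone_{\DD}Q=-\bone_{\DD}$. This reduces to $-(|D_2|-|D_1|)$ up to sign, which need not vanish. I would therefore use $\bone_{\DD}\widetilde A\hat v^\top$ together with {\bf (B)}, {\bf (C)} and $p_1+p_2=d_1+d_2$ to show that this constant is zero. If it is not zero, the term $C\hat v^\top/(t+1)$ can instead be absorbed by using the convergent sums from Step 1 to rewrite it as a deterministic drift toward a fixed point.

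With the constant disposed of, $\Z_t\hat v^\top=\Z_0\hat v^\top+\sum_s \Delta M_{s+1}\hat v^\top/(s+1)$. This is a martingale with bounded increments of size $O(1/s)$, so it is $L^2$-bounded because $\sum s^{-2}<\infty$. It therefore converges almost surely, and its limit is random.

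\textbf{Step 3: conclusion.} The block sums are recovered as an invertible linear combination of the three convergent quantities from Step 1 and $\Z_t\hat v^\top$. Hence $\Z_t\tilde J$ converges almost surely, and combined with Theorem~\ref{thm:randomlimit}, so does $\Z_t$. Proposition~\ref{prop:Var} is an alternative route for the directions other than $\hat v$: the choice $U=I-\tilde J$ gives mean and variance decay. The main obstacle is Step 2. We must verify that $\hat v$ is an exact right null direction of $Q\widetilde A-I$, and that the constant $C\hat v^\top$ vanishes or is harmless, under the stated hypotheses. I expect this to be settled by the block relations $|V|\mu_{UV}=|U|r_{UV}$ from the proof of Lemma~\ref{lem:commute}.
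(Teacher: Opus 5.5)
Your handling of the block-averaged part is fine, but the step that carries all the content is circular. You get $\Z_t(I-\tilde J)\to\bzero$ almost surely from Theorem~\ref{thm:randomlimit}, but the paper proves that theorem by combining the $\mathcal L^2$ decay of Proposition~\ref{prop:Var} with the almost sure convergence of $\Z_t$ supplied by Theorem~\ref{thm:QM1} itself. Without Theorem~\ref{thm:QM1}, one only knows $\IE\|\Z_t(I-\tilde J)\|^2\to0$.

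The block part is the easy half. Since $\bone\widetilde A=\bone$, multiplying {\bf (B)} by $\bone$ gives $N\mu^{out}_{\PP}=p_1+p_2=d_1+d_2=N\mu^{out}_{\DD}$. Hence $\mu^{out}_{\PP}=\mu^{out}_{\DD}=\tfrac12$, and the final clause of Theorem~\ref{thm:AS_average} yields all four block sums; this is Lemma~\ref{lem:limit_Z}, and it makes your Step~2 unnecessary. What remains is the almost sure behaviour of $\Z_t(I-\tilde J)$, which is exactly what must be proved. Your fallback that ``Proposition~\ref{prop:Var} gives mean and variance decay'' only gives convergence in $\mathcal L^2$, hence in probability, not almost surely.

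The paper closes this gap by showing that $\Z_t$ is a bounded quasimartingale. It splits the drift as $\Z_tH+\bone_{\DD}\widetilde A=\Z_t(I-\tilde J)H+(\Z_t\tilde J-\Z_\infty)H$. It bounds $\IE\|\Z_t(I-\tilde J)H\|=O(t^{-c_1})$ via Proposition~\ref{prop:Var} with $U=(I-\tilde J)H$, and bounds the second term through the recursion $G_{t+1}=G_tK_t+\Delta R_{t+1}H$. Together these give $\sum_t\IE\|\IE[\Z_{t+1}\mid\FF_t]-\Z_t\|<\infty$.

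Within your own decomposition there is a shorter repair. Proposition~\ref{prop:Var} gives $\IE\|\Z_t(I-\tilde J)\|^2=O(t^{-\delta'})$ for some $\delta'>0$. Markov's inequality and Borel--Cantelli along $t_k=\lceil k^m\rceil$ with $m\delta'>1$ then give $\Z_{t_k}(I-\tilde J)\to\bzero$ almost surely. Since $\|\Z_{t+1}-\Z_t\|\le C/(t+1)$ deterministically, the oscillation on $[t_k,t_{k+1}]$ is at most $C\log(t_{k+1}/t_k)+o(1)\to0$.

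Two smaller points in Step~2. First, $Q\widetilde A\hat v^\top=\hat v^\top$ is a right-eigenvector identity that needs {\bf (B)}; it is in Lemma~\ref{lem:GPA_structure}, not Lemma~\ref{lem:eigenvector}, which concerns left eigenvectors. Second, the constant $\bone_{\DD}\widetilde A\hat v^\top=d_1-d_2$ does vanish. Under {\bf (B)}, $\widetilde A\hat v^\top=\bone_{P_1\cup D_1}^\top-\bone_{P_2\cup D_2}^\top$, while $\bone\widetilde A\hat v^\top=\bone\hat v^\top$; comparing the two forces $d_1=d_2$. Your idea of ``absorbing'' a nonzero constant would not have worked anyway, since it would drive the bounded quantity $\IE[\Z_t\hat v^\top]$ to infinity like $\log t$.
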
 

We are now ready to prove Theorem~\ref{thm:randomlimit}.

\begin{proof}[Proof of Theorem~\ref{thm:randomlimit}]
Let $\Phi_t=\Z_t(I-\tilde J)$. We first apply Proposition~\ref{prop:Var} with $U=I-\tilde J$. Its assumptions are satisfied because $\mathbf 1_{\mathcal D}\widetilde{A}$ is constant on the four parts of the partition by (C), so $\mathbf 1_{\mathcal D}\widetilde{A}(I-\tilde J)=0$; $\tilde J$ commutes with $Q\widetilde{A}$ by Lemma~\ref{lem:commute}, so $Q\widetilde{A}(I-\tilde J)=(I-\tilde J)Q\widetilde{A}$; and $\hat v$ is constant on the four parts of the partition, so $\hat v(I-\tilde J)=0$. Hence Proposition~\ref{prop:Var} gives $\Phi_t\to{\bf 0}$ in $\mathcal L^2$.

On the other hand, by Theorem~\ref{thm:QM1}, $\Z_t$ has an almost sure limit; denote it by $\Z^*$. Therefore $\Phi_t=\Z_t(I-\tilde J)$ converges almost surely to $\Z^*(I-\tilde J)$. Since $\Z_t$ is bounded, $\|\Phi_t\|^2$ is bounded, and bounded convergence together with $\IE[\|\Phi_t\|^2]\to0$ gives $\IE[\|\Z^*(I-\tilde J)\|^2]=0$. Thus $\Z^*(I-\tilde J)={\bf 0}$ almost surely, and consequently $\Phi_t\to{\bf 0}$ almost surely. This proves that $\Z_t(I-\tilde{J})\to0$ almost surely.
\end{proof}

We note that the above result can alternatively be established using the theory of almost supermartingales. We take this approach to prove the same result for cycle graphs. Due to their special structure, several of the assumptions imposed in this section are automatically satisfied. For this reason, we illustrate the case of cycle graphs separately in Appendix~\ref{sec:extra}.

\section{Proofs}\label{sec:proofs}

\subsection{Proofs of Fluctuation Results from Section~\ref{sec:LILCLT}} \label{sec:Proofs_1}
\begin{proof}[Proof of Theorem \ref{thm:projected_martingale_rates}]
Multiplying \eqref{eq:centered_SA_recursion} on the right by $U$ gives
\[
        Y_{t+1}
        =
        Y_t\left(I+\frac{\Lambda-I}{t+1}\right)
        +
        \frac{1}{t+1}\left(\Delta M_{t+1}U+\epsilon_{t+1}U\right).
\]
Hence, for each \(r\in[N]\),
\begin{equation}\label{eq:projected_recursion}
        Y_{t+1}(r)
        =
        \left(1+\frac{\lambda_r'}{t+1}\right)Y_t(r)
        +
        \frac{(\Delta M_{t+1}U)(r)+(\epsilon_{t+1}U)(r)}{t+1}.
\end{equation}

Choose a fixed integer $t_0$ sufficiently large so that
$1+\lambda_r'/k\ne 0$ for all $k\ge t_0+1$ and all $r$. This avoids possible zero factors in the Euler products. Put \(d_{t_0}(r)=1\)
and, for \(t\geq t_0+1\),
\(d_t(r):=\prod_{j=t_0+1}^{t}(1+\lambda_r'/j)\). By the gamma-function asymptotic,
\begin{equation}\label{eq:dt_asymp_projected}
        d_t(r)
        =
        \frac{\Gamma(t+1+\lambda_r')\Gamma(t_0+1)}
             {\Gamma(t_0+1+\lambda_r')\Gamma(t+1)}
        \asymp
        t^{\mathrm{Re}(\lambda_r')}.\end{equation}
Iterating \eqref{eq:projected_recursion} gives the following.
\begin{equation}\label{eq:projected_variation_constants}
        Y_t(r)
        =
        d_t(r)
        \left(
            Y_{t_0}(r)
            +
            \mathcal L_t(r)
        \right)
        +
        \mathcal E_t(r),
\end{equation}
where $\mathcal L_t(r)
        :=
        \sum_{k=t_0}^{t-1}
        \frac{(\Delta M_{k+1}U)(r)}
             {(k+1)d_{k+1}(r)}$
and
        $\mathcal E_t(r)
        :=
        d_t(r)
        \sum_{k=t_0}^{t-1}
        \frac{(\epsilon_{k+1}U)(r)}
             {(k+1)d_{k+1}(r)}$.  We analyse separately the martingale term $\mathcal L_t(r)$ and the accumulated error term $\mathcal E_t(r)$.

\begin{itemize}
\item[Step 1:] Since the martingale difference $\Delta M_t$ is  bounded and $U$ is fixed,
there is a deterministic constant $K>0$ such that $| (\Delta M_t U)(r)|\le K$ for all $t,r$. Hence the predictable quadratic variation of $\mathcal L_t(r)$ satisfies
\[
        \big\langle \mathcal L(r)\big\rangle_t
        :=
        \sum_{k=t_0}^{t-1}
        \IE\left[
        \left.
        \left|
        \frac{(\Delta M_{k+1} U)(r)}
             {(k+1)d_{k+1}(r)}
        \right|^2
        \right|\FF_k
        \right]
        \le
        K^2\,\ell_t(r),
\]
where
$\ell_t(r)
        :=
        \sum_{k=t_0}^{t-1}
        \left|(k+1)d_{k+1}(r)\right|^{-2}.$ Using \eqref{eq:dt_asymp_projected}, we immediately get that $\ell_t(r)  \asymp  \sum_{k=t_0}^{t}  k^{-2-2\mathrm{Re}(\lambda_r')}$. Therefore
\[
\ell_t(r)
=
\begin{cases}
O\!\left(t^{-1-2\mathrm{Re}(\lambda_r')}\right),
& \mathrm{Re}(\lambda_r')<-1/2,\\[6pt]
O(\log t),
& \mathrm{Re}(\lambda_r')=-1/2,
\end{cases}
\]
and $\ell_t(r) \to \ell_\infty(r) \coloneqq \sum_{k=t_0}^{\infty}  \left|(k+1)d_{k+1}(r)\right|^{-2} < \infty$  for $\mathrm{Re}(\lambda_r')>-1/2$.

We now apply Theorem~\ref{aux:duflo_martingale_estimate} separately to the
real and imaginary parts of the complex martingale \(\mathcal L_t(r)\).
Since \(\langle \mathcal L(r)\rangle_t\le K^2\ell_t(r)\), it follows
that, whenever \(\ell_t(r)\to\infty\), for every \(\varepsilon>0\),
\begin{equation}\label{eq:L_bound_by_ell}
        \mathcal L_t(r)
        =
        o\!\left(
        (\ell_t(r))^{1/2}
        (\log \ell_t(r))^{1/2+\varepsilon}
        \right)
        \qquad\text{a.s.}
\end{equation}
If \(\ell_t(r)\) converges, then the same theorem gives $\mathcal L_t(r)
        \xrightarrow{a.s.}
        \mathcal L_\infty(r)$, where \(\mathcal L_\infty(r)\) is finite almost surely. In particular, this
convergence holds when \(\operatorname{Re}(\lambda_r')>-1/2\).

Multiplying \eqref{eq:L_bound_by_ell} by $d_t(r)$ gives, in the diffusive and critical cases,
\begin{equation}\label{eq:dL_rates}
        d_t(r)\mathcal L_t(r)
        =
        \begin{cases}
        o\!\left(
        t^{-1/2}(\log t)^{1/2+\varepsilon}
        \right),
        & \mathrm{Re}(\lambda_r')<-1/2,\\[8pt]
        o\!\left(
        t^{-1/2}
        (\log t)^{1/2}
        (\log\log t)^{1/2+\varepsilon}
        \right),
        & \mathrm{Re}(\lambda_r')=-1/2,
        \end{cases}
        \qquad\text{a.s.}
\end{equation}

\item[Step 2:]

We now estimate the accumulated error
$\mathcal E_t(r)$ using \eqref{eq:error_one_step_bound}.
Since $\|\epsilon_{k+1}\|=O((k+1)^{-1})$ almost surely and
$|d_t(r)|\asymp t^{\operatorname{Re}(\lambda_r')}$,
there exists an almost surely finite random constant $K>0$,
independent of $t$, such that
\[
        |\mathcal E_t(r)|
        \le
        K |d_t(r)|
        \sum_{k=t_0}^{t-1}
        \frac{1}{(k+1)^2 |d_{k+1}(r)|}
        \le
        K t^{\operatorname{Re}(\lambda_r')}
        \sum_{k=t_0}^{t}
        k^{-2-\operatorname{Re}(\lambda_r')}.
\]

It follows that
\begin{equation} \label{eq:errordecay}
        |\mathcal E_t(r)|
        =
        \begin{cases}
        O(t^{-1}),
        & \mathrm{Re}(\lambda_r')<-1,\\[6pt]
        O((\log t)/t),
        & \mathrm{Re}(\lambda_r')=-1,\\[6pt]
        O(t^{\mathrm{Re}(\lambda_r')}),
        & -1<\mathrm{Re}(\lambda_r').
        \end{cases}
\end{equation}
\end{itemize}
In particular, for $\mathrm{Re}(\lambda_r') \le -1/2$ this error is negligible when compared with the corresponding martingale rate in \eqref{eq:dL_rates}. Using the estimates from \eqref{eq:dL_rates}  in \eqref{eq:projected_variation_constants} (combined with the fact that the term $d_t(r)Y_{t_0}(r)$ is negligible compared with the
corresponding bound in \eqref{eq:dL_rates})  proves the diffusive and critical assertions.

\noindent In the superdiffusive case, \(-1/2<\operatorname{Re}(\lambda_r')<0\). So, 
\(2+\operatorname{Re}(\lambda_r')>1\). We therefore have $
        |\mathcal E_t(r)|
        =
        O\!\left(t^{\operatorname{Re}(\lambda_r')}\right)
        \text{a.s.}$
Further, 
\[
        \frac{Y_t(r)}{d_t(r)}
        =
        Y_{t_0}(r)
        +\mathcal L_t(r)
        +
        \sum_{k=t_0}^{t-1}
        \frac{(\epsilon_{k+1}U)(r)}
             {(k+1)d_{k+1}(r)}.
\]
 Recall that in this regime, $\mathcal L_t(r)
        \xrightarrow{a.s.}
        \mathcal L_\infty(r)$, where \(\mathcal L_\infty(r)\) is finite almost surely. Using
\(\|\epsilon_{k+1}\|=O((k+1)^{-1})\) and
\(|d_k(r)|\asymp k^{\operatorname{Re}(\lambda_r')}\), we get that the last series
converges absolutely. Therefore
\(Y_t(r)/d_t(r)\) converges almost surely to a finite random variable. Again using \(|d_t(r)|\asymp t^{\operatorname{Re}(\lambda_r')}\), we obtain
$Y_t(r)
        =
        O\!\left(t^{\operatorname{Re}(\lambda_r')}\right)
        \text{a.s.}$ This proves the assertion in the superdiffusive regime.

\noindent Finally, since \(U\) is invertible and \(X_t=Y_tU^{-1}\), there exists a
constant \(K_U>0\) such that $\|X_t\|\le K_U\|Y_t\|$.
Thus the coordinatewise projected bounds imply the asserted bounds for $\|X_t\|=\|\Z_t-\bZ^*\|$. This completes the proof.
\end{proof}

We next compute the limiting conditional covariance of the martingale increments.
\begin{lemma}[Limiting conditional covariance]\label{lem:limiting_martingale_covariance}
Assume that $\Z_t\to\bZ^*$ almost surely, where $\bZ^*$
may be random. Let $\Delta M_{t+1}$ be the martingale
difference appearing in \eqref{eq:main_recur_vec}. Then
\[
        \IE\!\left[
        (\Delta M_{t+1})^\top \Delta M_{t+1}\mid\FF_t
        \right]
        \xrightarrow{a.s.}
        \widetilde A^\top \Gamma'\widetilde A,
\]
where \(\Gamma'\) is a diagonal matrix with entries
$\Gamma'_{jj}=p_j^\star(1-p_j^\star)$, $p_j^\star=q_js_j^\star+(1-q_j)(1-s_j^\star)$ and 
        $s_j^\star
        =\sum_{k\in\VV}w_{kj}\bZ^*(k)$.
\end{lemma}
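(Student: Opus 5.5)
We compute the conditional covariance directly from the definition of the martingale difference and then pass to the limit by continuity. By definition, $\Delta M_{t+1}(i)=\sum_{j}\mu_{ji}\xi_{t+1}(j)$, where
\[
\xi_{t+1}(j):=\I_{t+1}(j)-\IE[\I_{t+1}(j)\mid\FF_t]
\]
and non-edges contribute zero. In row-vector form this reads $\Delta M_{t+1}=\xi_{t+1}\widetilde A$, with $\xi_{t+1}=(\xi_{t+1}(j))_{j\in[N]}$. Hence
\[
(\Delta M_{t+1})^\top\Delta M_{t+1}=\widetilde A^\top\xi_{t+1}^\top\xi_{t+1}\widetilde A.
\]
Since $\widetilde A$ is deterministic, we obtain
\[
\IE[(\Delta M_{t+1})^\top\Delta M_{t+1}\mid\FF_t]=\widetilde A^\top\,\IE[\xi_{t+1}^\top\xi_{t+1}\mid\FF_t]\,\widetilde A.
\]
The task therefore reduces to identifying the inner conditional covariance and its almost sure limit.

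For the inner matrix we use that, conditionally on $\FF_t$, the indicators $\{\I_{t+1}(j)\}_{j}$ are independent Bernoulli variables. Their success probabilities are
\[
p_j(t):=q_js_j(t)+(1-q_j)(1-s_j(t)),\qquad s_j(t):=\sum_k w_{kj}Z_t(k),
\]
by \eqref{eq:sampling_indicator}. The off-diagonal entries $\IE[\xi_{t+1}(j)\xi_{t+1}(j')\mid\FF_t]$ for $j\neq j'$ vanish by conditional independence and centering. The diagonal entries are the Bernoulli variances $p_j(t)(1-p_j(t))$. Thus
\[
\IE[\xi_{t+1}^\top\xi_{t+1}\mid\FF_t]=\Gamma'_t:=\operatorname{diag}\bigl(p_j(t)(1-p_j(t))\bigr).
\]

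Finally, we pass to the limit. The map $z\mapsto s_j=\sum_k w_{kj}z(k)$ is linear, $s\mapsto p=q_js+(1-q_j)(1-s)$ is affine, and $p\mapsto p(1-p)$ is a polynomial; all three are continuous. Since $\Z_t\to\bZ^*$ almost surely, we get $s_j(t)\to s_j^\star$ and $p_j(t)\to p_j^\star$ almost surely, hence $\Gamma'_t\to\Gamma'$ entrywise almost surely. Conjugating by the fixed matrix $\widetilde A$ preserves this convergence, which gives $\widetilde A^\top\Gamma'_t\widetilde A\to\widetilde A^\top\Gamma'\widetilde A$ almost surely. Randomness of $\bZ^*$ causes no difficulty, because the argument is pathwise on the full-measure event where convergence holds.

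There is no real obstacle in this argument. The only point needing care is the bookkeeping of the row-vector convention, so that the transposes land as $\widetilde A^\top(\cdot)\widetilde A$, and noting that conditional independence is used only for the off-diagonal entries.
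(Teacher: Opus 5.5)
Your proposal is correct and follows essentially the same route as the paper's proof: it writes $\Delta M_{t+1}$ as the centered indicator vector times $\widetilde A$, uses conditional independence to get the diagonal Bernoulli covariance $\operatorname{diag}\bigl(p_j(t)(1-p_j(t))\bigr)$, and passes to the limit pathwise by continuity of $\Z_t\mapsto p_j(t)$. Nothing is missing.
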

\begin{proof}
For each $j$, set $ s_j(t):=\sum_{k\in\VV}w_{kj}Z_t(k)$ and $p_j(t) \coloneqq q_j s_j(t)+(1-q_j)(1-s_j(t)) = (2q_j-1)s_j(t)+(1-q_j)$. Then, conditionally on $\FF_t$, the sampling indicator $\I_{t+1}(j)$ is Bernoulli with parameter $p_j(t)$. 

Define the row vector $\Delta M'_{t+1}
        :=
        \I_{t+1}
        -
        \IE[\I_{t+1}\mid\FF_t]$.
Then $\Delta M_{t+1}=\Delta M'_{t+1}\widetilde A$.  By conditional independence of the draws at a fixed time, \(\IE\!\left[
        (\Delta M'_{t+1})^\top \Delta M'_{t+1}
        \mid\FF_t
        \right]\) is
diagonal. Its \(j\)-th diagonal entry is 
        $p_j(t)(1-p_j(t))$. 
        
Since \(\Z_t\to\bZ^*\) almost surely, we have  $s_j(t)\xrightarrow{a.s.}s_j^\star
        :=
        \sum_{k\in\VV}w_{kj}\bZ^*(k)$. Hence $p_j(t)\xrightarrow{a.s.}p_j^\star
        :=
        q_js_j^\star+(1-q_j)(1-s_j^\star)$, and therefore
 $\IE\!\left[
        (\Delta M'_{t+1})^\top \Delta M'_{t+1}
        \mid\FF_t
        \right]
        \xrightarrow{a.s.}
        \Gamma' $, where \(\Gamma'\) is the diagonal matrix defined in the statement of the
lemma. Finally, since \(\Delta M_{t+1}=\Delta M'_{t+1}\widetilde A\),
\[
        \IE\!\left[
        (\Delta M_{t+1})^\top \Delta M_{t+1}
        \mid\FF_t
        \right]
        \xrightarrow{a.s.}
        \widetilde A^\top \Gamma'\widetilde A.
\]
This proves the lemma.
\end{proof}

\begin{proof}[Proof of Theorem\ref{thm:joint_compact_LIL_SA}]

We verify the assumptions of the stochastic-approximation compact LIL
stated in Theorem~\ref{aux:MP_compact_LIL} for the centred recursion
\eqref{eq:centered_SA_recursion}. This verification follows
\cite[Theorem~11]{das2024elephant}, with the remainder controlled
by \eqref{eq:error_one_step_bound}.
We use Theorem~\ref{aux:MP_compact_LIL} for the stochastic
approximation algorithm
\begin{equation}\label{eq:SA_LIL}
        X_{t+1}
        =
        X_t+\gamma_t\big[h(X_t)+r_{t+1}\big]
        +\sigma_t\xi_{t+1},
\end{equation}
with $h(x)=xH=x(B-I), r_{t+1}=\epsilon_{t+1}, \xi_{t+1}=\Delta M_{t+1}$
and $\gamma_t=\sigma_t=\frac{1}{t+1}$. Thus, 
\[
        v_t:=\frac{\gamma_t}{\sigma_t^2}=t+1,
        \qquad
        s_t:=\sum_{k=1}^{t}\gamma_k
        =
        \sum_{k=1}^{t}\frac1{k+1}
        \sim \log t.
\]
We now check assumptions {\bf (1)--(5)} of Theorem~\ref{aux:MP_compact_LIL}.

\smallskip

\noindent
\textbf{(1)}
Under the deterministic-limit assumption of the theorem, $\Z_t\xrightarrow{a.s.}\bZ^*$. Therefore, $X_t\xrightarrow{a.s.}0$. Thus the stochastic approximation algorithm in \eqref{eq:SA_LIL} is consistent, with equilibrium $0$.

\smallskip

\noindent
\textbf{(2)}
The function $h(x)=xH$ is linear, so its Jacobian at $0$
is $H=B-I$. Set $L:=1-\operatorname{Re}(\lambda_1)$.
Then
\[
\max_{\mu\in\mathrm{Sp}(H)}\operatorname{Re}(\mu)
=\operatorname{Re}(\lambda_1)-1=-L.
\]
Since $\operatorname{Re}(\lambda_1)<1/2$, we have
$L>1/2$. Therefore the linear drift is attractive.

\smallskip

\noindent
\textbf{(3)}
We clearly have  $\gamma_t\to0, \sigma_t\to0,
        s_t\to\infty$ and
        $\frac{\gamma_t}{\gamma_{t+1}}=1+O(\gamma_t)$. Moreover,
\[
        \frac1{\gamma_t}
        \left(
        1-\frac{v_{t-1}}{v_t}
        \right)
        =
        (t+1)\left(1-\frac{t}{t+1}\right)
        =
        1.
\]
Thus the parameter $\rho_{\mathrm{MP}}$ in
Theorem~\ref{aux:MP_compact_LIL} equals $1$. Since $L>1/2$, we have $1\in(0,2L)$, so {\bf (3)} holds.

\smallskip

\noindent
\smallskip

\noindent
\textbf{(4)}
The martingale
difference term in \eqref{eq:SA_LIL} has uniformly bounded increments. Hence, for every $\delta>0$, $\sup_{t\ge0} \IE\!\left[\|\Delta M_{t+1}\|^{2+\delta} \mid \FF_t \right] <\infty$ a.s. Therefore, by the sufficient martingale-difference condition for {\bf (4)} stated
in Comment~(4) preceding Theorem~1 of Mokkadem and Pelletier
\cite[Comment~(4)]{MP2005}, the required almost sure invariance principle
holds with covariance matrix $\Gamma$. Thus {\bf (4)} holds.

\smallskip

\noindent
\textbf{(5)} Using \eqref{eq:error_one_step_bound}, we have  $\|\epsilon_{t+1}\|= O(v_t^{-1/2})$ a.s. Thus {\bf (5)} holds by taking $r_{t+1}^{(1)}=\epsilon_{t+1}$ and the remaining
$o(\|X_t\|)$ term equal to zero.

\smallskip

\noindent All assumptions of Theorem~\ref{aux:MP_compact_LIL} are now verified. Therefore
\(\{\sqrt{v_t/(2\log s_t)}\,X_t\}_{t\ge1}\) is relatively compact almost surely,
and its limit set is the ellipsoid associated with
\[
        \int_0^\infty
        \exp\!\left((H+\tfrac12 I)^\top u\right)
        \Gamma
        \exp\!\left((H+\tfrac12 I)u\right)
        \,du .
\]
Since \(H=B-I\), the above matrix is precisely \(\Sigma_\infty\) defined in
\eqref{eq:Sigma_infty_LIL}. Also, \(v_t=t+1\) and \(s_t\sim\log t\), so the
normalization \(\sqrt{v_t/(2\log s_t)}\) is asymptotic to
\(\sqrt{t/(2\log\log t)}\). This proves the theorem.
\end{proof}
Finally we prove the joint Central Limit theorem. 
\begin{proof}[Proof of Theorem~\ref{thm:joint_CLT_SA}]
We apply Zhang's stochastic approximation CLT
\cite[Theorem~2.3]{MR3582813} to the centred recursion \eqref{eq:centered_SA_recursion}. In Zhang's notation, the drift is
\(h(x)=x(I-B)\), and hence the eigenvalues of \(Dh(0)\) are
\(1-\lambda_r\), \(1\le r\le N\). Therefore
\[
        \min_{1\le r\le N}\operatorname{Re}(1-\lambda_r)
        =
        1-\operatorname{Re}(\lambda_1)        >
        \frac12 .
\]
The deterministic-limit assumption gives \(X_t\to0\) almost surely. The
martingale difference term has uniformly bounded increments, so the required
Lindeberg condition holds. Moreover, by
Lemma~\ref{lem:limiting_martingale_covariance}, its conditional covariance
converges almost surely to \(\Gamma\). Finally, by
\eqref{eq:error_one_step_bound},
\[
        \sum_{k=1}^{t}\epsilon_k
        =
        O(\log t)
        =
        o(\sqrt t)
        \qquad\text{a.s.}
\]

Thus all assumptions of \cite[Theorem~2.3]{MR3582813} are satisfied, and the
conclusion follows. The covariance matrix appearing in that theorem
is exactly $\Sigma_\infty$, since
$-(Dh(0)-\tfrac12 I)=B-\tfrac12 I$. This proves the theorem.
\end{proof}

\subsection{Proofs of Random Regime Results from Section~\ref{sec:convergence_random}}
Throughout this subsection, we work with the random-limit
model and the fixed GPA partition introduced in
Section~\ref{sec:convergence_random}.
Additional assumptions are stated separately for each result.
\subsubsection{Proof of Theorem \ref{thm:AS_average}} \label{sec:Proofs_2}

We begin with the following lemma.
\begin{lemma}\label{lem:GPA_structure}
Assume {\bf (B)} holds. Then
$\mu^{out}_{\mathcal P}+\mu^{out}_{\mathcal D}=1$
and $\widetilde A\bone^\top=\bone^\top$.
Further, if $d_1=d_2$, then
$Z_t^{P_1}+Z_t^{D_2}-Z_t^{P_2}-Z_t^{D_1}$
is a bounded martingale.
\end{lemma}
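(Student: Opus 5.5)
The first two assertions are a counting argument on the entries of $\widetilde A$. The martingale property then follows from computing $\widetilde A\hat v^\top$ using the block form \eqref{adjacency-partition}, where $\hat v=\bone_{P_1\cup D_2}-\bone_{P_2\cup D_1}$ as defined before Lemma~\ref{lem:commute}.

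\emph{Step 1: row sums.} By {\bf (B)}, $\widetilde A\bone^\top=\widetilde A\bone_{\mathcal P}^\top+\widetilde A\bone_{\mathcal D}^\top=(\mu^{out}_{\mathcal P}+\mu^{out}_{\mathcal D})\bone^\top$. So every row sum of $\widetilde A$ equals $\mu^{out}_{\mathcal P}+\mu^{out}_{\mathcal D}$. On the other hand, under {\bf (A2)} every vertex is positive-critical, so every column sum of $\widetilde A$ is $\sum_{j\to i}\mu_{ji}=1$. Summing all entries of $\widetilde A$ in the two ways gives $N(\mu^{out}_{\mathcal P}+\mu^{out}_{\mathcal D})=N$. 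Hence $\mu^{out}_{\mathcal P}+\mu^{out}_{\mathcal D}=1$ and $\widetilde A\bone^\top=\bone^\top$.

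\emph{Step 2: action of $\widetilde A$ on $\hat v$.} Note that $Y_t:=\Z_t\hat v^\top=Z_t^{P_1}+Z_t^{D_2}-Z_t^{P_2}-Z_t^{D_1}$. From \eqref{adjacency-partition}, rows of $\widetilde A$ indexed by $P_1$ or $D_1$ are supported on the columns $P_1\cup D_2$, where $\hat v=+1$. Rows indexed by $P_2$ or $D_2$ are supported on $P_2\cup D_1$, where $\hat v=-1$. Combined with unit row sums from Step 1, this gives
\[
\widetilde A\hat v^\top=(\bone_{P_1\cup D_1}-\bone_{P_2\cup D_2})^\top .
\]

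\emph{Step 3: conditional expectation.} Under {\bf (A2)} the recursion reads $\Z_{t+1}=\Z_t(1-\tfrac1{t+1})+\tfrac1{t+1}\I_{t+1}\widetilde A$, with $\I_{t+1}$ the row vector of sampling indicators. Using \eqref{chi},
\[
\IE[\I_{t+1}\widetilde A\hat v^\top\mid\FF_t]=Z_t^{P_1}-Z_t^{P_2}+(d_1-Z_t^{D_1})-(d_2-Z_t^{D_2})=Y_t+(d_1-d_2).
\]
Hence when $d_1=d_2$ we get $\IE[Y_{t+1}\mid\FF_t]=Y_t(1-\tfrac1{t+1})+\tfrac{1}{t+1}Y_t=Y_t$. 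Boundedness is immediate because $Z_t(i)\in[0,1]$, so $|Y_t|\le N$.

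The only delicate point is Step 2. It requires reading the GPA block structure correctly, namely which target blocks each source block reinforces, and it relies on the unit row sums from Step 1, which is precisely where {\bf (B)} enters.
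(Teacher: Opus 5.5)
Your proposal is correct and follows essentially the same route as the paper. Both use the same double-counting (your row/column sums are the paper's multiplication of $\widetilde A\bone^\top=(\mu^{out}_{\mathcal P}+\mu^{out}_{\mathcal D})\bone^\top$ by $\bone$) and the same block-structure identity $\widetilde A\hat v^\top=(\bone_{P_1\cup D_1}-\bone_{P_2\cup D_2})^\top$. The only cosmetic difference is that you compute $\IE[\I_{t+1}\widetilde A\hat v^\top\mid\FF_t]=Y_t+(d_1-d_2)$ in one step, whereas the paper splits this into $Q\widetilde A S=S$ and $\bone_{\mathcal D}\widetilde A S=d_1-d_2=0$.
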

 \begin{proof}
 Since every vertex is positive-critical, $\bone \widetilde A=\bone$. 
Multiplying the identity \[ \widetilde A\bone^{\top} = (\mu_{\mathcal P}^{out} +\mu_{\mathcal D}^{out}) \bone^{\top} \] from the left by $\bone$ gives 
 \[ N = \bone\widetilde A\bone^{\top} = (\mu_{\mathcal P}^{out} +\mu_{\mathcal D}^{out}) \bone\bone^{\top} = N(\mu_{\mathcal P}^{out} +\mu_{\mathcal D}^{out}). \] 
 Hence \[ \mu_{\mathcal P}^{out} +\mu_{\mathcal D}^{out} =1. \]
 Since $\bone  = \bone_{\PP} + \bone_{\DD}$ we get 
 \[\tA \bone^\top  = \mu^{out}_{\PP}\bone^{\top} + \mu^{out}_{\DD}\bone^{\top} = \bone^\top.\]
Now let $S=\bone_{P_1}^{\top}+\bone_{D_2}^{\top}
 -\bone_{P_2}^{\top}-\bone_{D_1}^{\top}$. Then
 $\Z_tS=Z_t^{P_1}+Z_t^{D_2}-Z_t^{P_2}-Z_t^{D_1}$.
 From \ref{eq:main_recur_vec} and {\rm \bf (A2)} we get, 
 \begin{eqnarray}\label{gen_exp}
 \IE[\Z_{t+1}S\mid\FF_t]
 &=&
 \Z_tS\left(1-\frac{1}{t+1}\right)
 +
 \frac{1}{t+1}
 \left( \Z_tQ\widetilde A+\bone_{\mathcal D}\widetilde A\right)S\\
 &=&
 \Z_tS+
 \frac{1}{t+1}
 \left[
 \Z_t(Q\widetilde A-I)S+\bone_{\mathcal D}\widetilde AS
 \right].
 \end{eqnarray}
 Therefore, it is enough to show that $(Q\widetilde A-I)S=\mathbf{0}$ and $\bone_{\mathcal D}\widetilde AS=\mathbf{0}$. Using the structure of the adjacency matrix (from \eqref{adjacency-partition}) we get 
 \[ \widetilde AS = \bone_{P_1}^{\top} -\bone_{D_2}^{\top} -\bone_{P_2}^{\top} +\bone_{D_1}^{\top}  \]
 and
 \[ Q\widetilde AS = \bone_{P_1}^{\top}  +\bone_{D_2}^{\top}-\bone_{P_2}^{\top} -\bone_{D_1}^{\top} = S. \]
 Similarly, we get $\bone_{\DD} \tA S = \mathbf{0}$ when $d_1 = d_2$. 
 \end{proof}
\begin{remark}\label{d1d2}
We note that $Z_t^{P_1}+Z_t^{D_2}-Z_t^{P_2}-Z_t^{D_1}$ is a bounded
supermartingale if $d_1\leq d_2$, and a bounded submartingale if $d_1\geq d_2$. Consequently, it admits an almost sure
limit for all $d_1,d_2$.
\end{remark}

\begin{proof}[Proof of Theorem \ref{thm:AS_average}]
From \eqref{noderecurnew} we get
\begin{align*}
Z_{t+1}(i)
&= Z_t(i) + \frac{1}{t+1} \left(\Delta M_{t+1}(i) +
\sum_{j\in N(i)\cap\mathcal P}\mu_{ji}Z_t(j) \right)
\\
&\quad
+ \frac{1}{t+1} \left( \sum_{j\in N(i)\cap\mathcal D}\mu_{ji}(1-Z_t(j))-Z_t(i)\right).
\end{align*}

For $U\subseteq\VV$, write
$\Delta M_{t+1}^{U}:=\sum_{i\in U}\Delta M_{t+1}(i)$.
By the block structure of the adjacency matrix and
{\rm \bf (B)}, we obtain
\begin{align*}
Z_{t+1}^{P_1}
&= Z_t^{P_1} + \frac{1}{t+1} \left[ \mu^{out}_{\mathcal P}
\left( Z_t^{P_1}-Z_t^{D_1}+d_1 \right) - Z_t^{P_1} \right] + \frac{\Delta M_{t+1}^{P_1}}{t+1},
\\
Z_{t+1}^{D_1}
&= Z_t^{D_1} + \frac{1}{t+1} \left[ \mu^{out}_{\mathcal D}
\left( Z_t^{P_2}-Z_t^{D_2}+d_2 \right) - Z_t^{D_1} \right] + \frac{\Delta M_{t+1}^{D_1}}{t+1},
\\
Z_{t+1}^{P_2}
&= Z_t^{P_2} + \frac{1}{t+1} \left[ \mu^{out}_{\mathcal P}
\left(Z_t^{P_2}-Z_t^{D_2}+d_2 \right) - Z_t^{P_2} \right]
+ \frac{\Delta M_{t+1}^{P_2}}{t+1},
\\
Z_{t+1}^{D_2}
&= Z_t^{D_2} + \frac{1}{t+1} \left[ \mu^{out}_{\mathcal D}
\left(Z_t^{P_1}-Z_t^{D_1}+d_1 \right) - Z_t^{D_2} \right]
+ \frac{\Delta M_{t+1}^{D_2}}{t+1}.
\end{align*}
Adding the $P_1$ and $P_2$ equations gives
\begin{align*}
Z_{t+1}^{\mathcal P}
&= Z_t^{\mathcal P} + \frac{1}{t+1} \left[ (\mu^{out}_{\mathcal P}-1)Z_t^{\mathcal P} -\mu^{out}_{\mathcal P}Z_t^{\mathcal D} +\mu^{out}_{\mathcal P}(d_1+d_2)
\right] + \frac{\Delta M_{t+1}^{\mathcal P}}{t+1},
\end{align*}
Similarly,
\begin{align*}
Z_{t+1}^{\mathcal D}
&= Z_t^{\mathcal D} + \frac{1}{t+1} \left[ \mu^{out}_{\mathcal D}Z_t^{\mathcal P} -(\mu^{out}_{\mathcal D}+1)Z_t^{\mathcal D} +\mu^{out}_{\mathcal D}(d_1+d_2)
\right] + \frac{\Delta M_{t+1}^{\mathcal D}}{t+1}.
\end{align*}
Thus, $(Z_t^{\mathcal P},Z_t^{\mathcal D})$ is a two-dimensional
stochastic approximation with limiting ODE
\begin{equation}\label{ode1}
\frac{d}{dt}
\begin{pmatrix}
Z_t^{\mathcal P}\\
Z_t^{\mathcal D}
\end{pmatrix}
=
\begin{pmatrix}
\mu^{out}_{\mathcal P}-1
&
-\mu^{out}_{\mathcal P}
\\
\mu^{out}_{\mathcal D}
&
-\mu^{out}_{\mathcal D}-1
\end{pmatrix}
\begin{pmatrix}
Z_t^{\mathcal P}\\
Z_t^{\mathcal D}
\end{pmatrix}
+
(d_1+d_2)
\begin{pmatrix}
\mu^{out}_{\mathcal P}\\
\mu^{out}_{\mathcal D}
\end{pmatrix}.
\end{equation}
The eigenvalues of the matrix in \eqref{ode1} are
$-1$ and
$\mu^{out}_{\mathcal P}
-\mu^{out}_{\mathcal D}-1$.
Hence, if
$\mu^{out}_{\mathcal P}
-\mu^{out}_{\mathcal D}<1$,
the equilibrium is stable and is given by
\[
\frac{d_1+d_2}
{\mu^{out}_{\mathcal D}-\mu^{out}_{\mathcal P}+1}
\begin{pmatrix}
\mu^{out}_{\mathcal P}\\
\mu^{out}_{\mathcal D}
\end{pmatrix}.
\]
The stochastic approximation convergence theorem therefore gives the claimed almost sure convergence. Now suppose
$\mu^{out}_{\mathcal P}
=
\mu^{out}_{\mathcal D}
=
\mu^{out}$.
Since
$\mu^{out}_{\mathcal P}+\mu^{out}_{\mathcal D}=1$,
we necessarily have
$\mu^{out}=\frac{1}{2}$.
Using the four recursions above, the process $\left( Z_t^{P_1}+Z_t^{D_1}, Z_t^{P_1}+Z_t^{P_2},
Z_t^{D_1}+Z_t^{D_2}
\right)$ satisfies a stochastic approximation scheme with drift function $\frac{d_1+d_2}{2}(1,1,1) + \left(
Z_t^{P_1}+Z_t^{D_1},
Z_t^{P_1}+Z_t^{P_2},
Z_t^{D_1}+Z_t^{D_2}
\right)\Gamma$,
where $\Gamma=
\begin{pmatrix}
-1 & 0 & 0\\
\frac12 & -\frac12 & \frac12\\
-\frac12 & -\frac12 & -\frac32
\end{pmatrix}$. The only eigenvalue of $\Gamma$ is $-1$. Hence
\[
\left(
Z_t^{P_1}+Z_t^{D_1},
Z_t^{P_1}+Z_t^{P_2},
Z_t^{D_1}+Z_t^{D_2}
\right)
\xrightarrow{a.s.}
\frac{d_1+d_2}{2}(1,1,1).
\]
Similarly,
\[
\left(
Z_t^{P_2}+Z_t^{D_2},
Z_t^{P_1}+Z_t^{P_2},
Z_t^{D_1}+Z_t^{D_2}
\right)
\xrightarrow{a.s.}
\frac{d_1+d_2}{2}(1,1,1).
\]
By Lemma~\ref{lem:GPA_structure} and
Remark~\ref{d1d2},
$Z_t^{P_1}+Z_t^{D_2}-Z_t^{P_2}-Z_t^{D_1}$ admits an almost sure limit.
Since $Z_t^{P_1}+Z_t^{P_2}$ has an almost sure deterministic limit, it follows that
$2Z_t^{P_1}+Z_t^{D_2}-Z_t^{D_1}$ admits an almost sure limit. Combining this
with the deterministic limit of $Z_t^{D_1}+Z_t^{D_2}$, we get that
$Z_t^{P_1}-Z_t^{D_1}$ admits an almost sure limit. Since
$Z_t^{P_1}+Z_t^{D_1}$ also has an almost sure deterministic limit, both
$Z_t^{P_1}$ and $Z_t^{D_1}$ admit almost sure limits. Similarly,
$Z_t^{P_2}$ and $Z_t^{D_2}$ also have almost sure limits.
\end{proof}

\medskip
\begin{remark}
    Observe that with assumption {\bf (B)} and Lemma~\ref{lem:GPA_structure}, the condition in Theorem~\ref{thm:AS_average} is equivalent $\mu^{out}_{\mathcal P}<1/2$ (equivalently, $\mu^{out}_{\mathcal D}<0$). 
\end{remark}
Recall the block-averaging projection $\tilde J$.
Under the assumptions of the final part of
Theorem~\ref{thm:AS_average},
the four block sums $Z_t^{P_1},Z_t^{P_2},Z_t^{D_1}$ and $Z_t^{D_2}$
admit almost sure limits. Hence the block-averaged vector
$\Z_t\tilde J$ converges almost surely to a random vector
$\Z_\infty\in[0,1]^N$.  When $\mu^{out}_{\mathcal P} = \mu^{out}_{\mathcal D}$, Theorem~\ref{thm:AS_average} gives
$Z_t^{\mathcal P}\xrightarrow[t\to\infty]{a.s.} \frac{d_1+d_2}{2}$ and $Z_t^{\mathcal D}\xrightarrow[t\to\infty]{a.s.} \frac{d_1+d_2}{2}$. Hence $Z_t^{\mathcal P}+Z_t^{\mathcal D}
\xrightarrow[t\to\infty]{a.s.} d_1+d_2$. Therefore, if
$\bar Z_t=\frac{1}{N}\sum_{i=1}^N Z_t(i)$, then
$\bar Z_t\xrightarrow[t\to\infty]{a.s.}(d_1+d_2)/N$. Thus Corollary \ref{cor:Z_limit} is immediate.

 \subsubsection{Proof of Theorem \ref{thm:randomlimit}}
Note that we have already proved Theorem \ref{thm:randomlimit} at the end of Section~\ref{sec:convergence_random}. In this section, we prove all the auxilliary results, namely Proposition~\ref{prop:Var} and Theorem~\ref{thm:QM1}, used in the proof of Theorem \ref{thm:randomlimit}. We begin with the following spectral property of the drift function.

\begin{lemma}\label{lem:eigenvector}
    Define $\hat{v}={\bf 1}_{P_1 \cup D_2}-{\bf 1}_{P_2 \cup D_1}$. Then the real left eigenspace of $Q\widetilde{A}-I$ corresponding to the eigenvalue $0$ is spanned by
$\hat v$. In particular, if such a real left eigenvector $v$ is normalized so that
$\max_i |v_i|=1$, then $v=\hat v$ or $v=-\hat v$.
\end{lemma}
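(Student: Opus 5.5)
The proof has two parts: $\hat v$ is a left null vector of $Q\widetilde A-I$, and any real left null vector is a multiple of $\hat v$. Throughout we are under \textbf{(A2)} with $W=I$, so we work directly with the one-step in-neighbourhoods and the block form \eqref{adjacency-partition}.

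\emph{$\hat v$ lies in the kernel.} We check $\hat v Q\widetilde A=\hat v$ coordinatewise. Write $\sigma_j=(\hat v Q)_j=(2q_j-1)\hat v_j$. Then $\sigma_j=1$ for $j\in P_1\cup D_1$ and $\sigma_j=-1$ for $j\in P_2\cup D_2$. Coordinate $k$ of $\hat v Q\widetilde A$ is $\sum_{j\to k}\mu_{jk}\sigma_j$. By the block structure \eqref{adjacency-partition}, the in-neighbours of a vertex $k\in P_1\cup D_2$ lie in $P_1\cup D_1$, so they all have $\sigma_j=1$. Likewise, the in-neighbours of a vertex $k\in P_2\cup D_1$ lie in $P_2\cup D_2$, so they all have $\sigma_j=-1$. (This is precisely the pattern of Figure~\ref{fig:gpa_partition} with one-step neighbourhoods.) Since every vertex is positive-critical, $\sum_{j\to k}\mu_{jk}=1$, and therefore the $k$-th coordinate equals $\hat v_k$.

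\emph{Uniqueness, by a maximum-modulus argument.} This mirrors the converse half of the proof of Theorem~\ref{thm:GPA}, now with one-step neighbourhoods. Let $v$ be a real nonzero vector with $v=vQ\widetilde A$, scaled so that $\max_i|v_i|=1$, and write the equation as
\[
v_k=\sum_{j\to k}\mu_{jk}(2q_j-1)v_j.
\]
The weights $\mu_{jk}$ are positive and sum to $1$, and $|(2q_j-1)v_j|\le 1$. So if $|v_k|=1$, strict convexity forces $(2q_j-1)v_j=v_k$, with $|v_j|=1$, for every in-neighbour $j$ of $k$. Start at a vertex $l$ where the maximum is attained and propagate this backward along edges. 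Since $\mathcal G$ is strongly connected, every vertex can reach $l$, so every vertex is reached. This gives $v_i\in\{\pm1\}$ for all $i$, and the sign relation $v_k=(2q_j-1)v_j$ holds along every edge $j\to k$.

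\emph{Identifying $v$ with $\pm\hat v$.} Both $v$ and $\hat v$ satisfy the same edge rule. Hence the ratio $v_i/\hat v_i\in\{\pm1\}$ is constant along every edge, because $(2q_j-1)^2=1$. By connectivity the ratio is a single global sign, so $v=\pm\hat v$. Any real null vector is a scalar multiple of one of norm one, so the real null space is exactly $\operatorname{span}\{\hat v\}$.

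The step needing the most care is the propagation argument. We must make sure the equality case applies at every newly reached vertex, which holds because each such vertex again has $|v|=1$. We must also make sure the backward exploration covers all of $\VV$. Strong connectivity from \textbf{(A2)} guarantees this.
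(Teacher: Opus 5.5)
Your proof is correct and uses the same mechanism as the paper's proof. At a vertex where $|v|$ is maximal, equality in the positive convex combination over in-neighbours forces $(2q_j-1)v_j=v_k$ with $|v_j|=1$, and strong connectivity carries this backward to every vertex. Your version is tidier in two respects: you explicitly verify that $\hat v$ lies in the left kernel, which the paper's proof leaves implicit, and your edge rule with the constant-ratio comparison to $\hat v$ replaces the paper's case analysis over block patterns of paths.
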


\begin{proof}
Let $v$ be a real left eigenvector of $Q\widetilde{A}-I$ corresponding to the eigenvalue $0$. Since the matrix $Q\widetilde{A}-I$ is real and the eigenvalue is real, this entails no loss of generality. Normalize $v$ so that $\max_i |v_i|=1$. Choose $j$ such that $|v_j|=1$, and replacing $v$ by $-v$ if necessary, assume that $v_j=1$.

Assume first that $j \in P_1$. For this node, the total reinforcement received from its in-neighbors comes from nodes in $P_1 \cup D_1$. Hence
\begin{equation*}
\sum_{i \in P_1} \mu_{ij}
+
\sum_{i \in D_1} \mu_{ij} = 1
\end{equation*}
On the other hand, the $j$-th component of the left eigenvector equation
$vQ\widetilde{A}=v$ gives
\begin{equation*}
\sum_{i \in P_1} v_i \mu_{ij}
-
\sum_{i \in D_1} v_i \mu_{ij}=1.
\end{equation*}
Comparing the two expressions  and using $|v_i|\leq 1$, the equality can hold only if, for every $i$ such that $\mu_{ij}>0$,
\begin{itemize}
    \item $v_i=1$ for $i\in P_1$,
    \item $v_i=-1$ for $i\in D_1$.
\end{itemize}

Now fix any $k\in P_1$. We may choose a path from $k$ to $j$ whose block pattern is one of the following:
\begin{itemize}
    \item $k \rightsquigarrow_{P_1} j$,
    \item $k \rightsquigarrow_{P_1} D_2 \rightarrow D_1 \rightarrow P_1 \rightsquigarrow_{P_1} j$,
    \item $k \rightsquigarrow_{P_1} D_2 \rightarrow P_2 \rightsquigarrow_{P_2} D_1 \rightarrow P_1 \rightsquigarrow_{P_1} j$.
\end{itemize}
In the first case, repeatedly applying the previous argument backwards along the path gives $v_k=1$. In the second case, the path contains a node $k'\in D_1$ with $v_{k'}=-1$. For such a node, the eigenvector equation gives
\begin{equation*}
v_{k'}
=
\sum_{i \in P_2} v_i \mu_{ik'}
-
\sum_{i \in D_2} v_i \mu_{ik'}.
\end{equation*}
Using the same comparison argument and backtracking along the path gives $v_k=1$. The third case is identical, with the same argument applied through the intermediate segment in $P_2$. Therefore $v_k=1$ for every $k\in P_1$. The same argument applied to the other blocks gives
\[
v_i=1 \quad \text{for all } i\in P_1\cup D_2,
\qquad
v_i=-1 \quad \text{for all } i\in P_2\cup D_1.
\]
Thus $v=\hat v$. If the initial choice of sign was the opposite, then $v=-\hat v$. If $j$ belongs to any of the other three partitions, the same argument, with the signs changed according to the partition containing $j$, gives the same conclusion. Hence $v=\hat v$ or $v=-\hat v$, and the real left eigenspace is spanned by $\hat v$.
\end{proof}
 The lemma implies that the geometric multiplicity of eigenvalue $0$ is $1$. Combined with assumption {\rm \bf (D)}, we get that the algebraic multiplicity of $0$ is also $1$. We now prove Proposition~\ref{prop:Var}, which gives the rate
of decay of the variance of $Z_tU$ for a suitable matrix $U$.

 \begin{proof}[Proof of Proposition \ref{prop:Var}]
Writing \eqref{noderecurnew} in vector form, we get

\begin{align}\label{eq:EVar}
\IE[\Z_{t+1}\mid\FF_t]
&=
\Z_t\left(1-\frac{1}{t+1}\right)
+\frac{(\Z_tQ+\bone_{\mathcal D})\widetilde A}{t+1}
\nonumber\\
&=
\Z_tK_t+\frac{\bone_{\mathcal D}\widetilde A}{t+1}.
\end{align}
where $K_t:=I-\frac{1}{t+1}\left(I-Q\widetilde{A}\right)$. Let $D_t:=\Z_tU$. Since $Q\widetilde{A}U=UQ\widetilde{A}$, we have $K_tU=UK_t$. Further, since $\mathbf{1}_{\mathcal D}\widetilde{A}U=0$, we get $\IE[D_{t+1}\mid \FF_t]=\Z_tK_tU=\Z_tUK_t=D_tK_t$. Taking expectation and iterating gives
\[
\IE[D_{t+1}]=\Z_0\prod_{i=0}^{t}K_i U.
\]
Using {\rm \bf (D)}, write
$I-Q\widetilde{A}=P\Lambda P^{-1}$, where
$\Lambda=\operatorname{diag}(\nu_1,\ldots,\nu_N)$. Then, $$\prod_{i=0}^{t}K_i
=
P\prod_{i=0}^{t}
\left(I-\frac{1}{i+1}\Lambda\right)P^{-1}.$$ Since $Q$ only changes signs of rows and the column sums of $\widetilde{A}$ are $1$, every eigenvalue $\eta$ of $Q\widetilde{A}$ satisfies $|\eta|\leq 1$. Hence every eigenvalue $\nu=1-\eta$ of $I-Q\widetilde{A}$ satisfies $\operatorname{Re}(\nu)\geq 0$. Let $s$ be such that $\nu_s=0$, and let $e_s$ denote the $s$-th standard row vector. Then,
\[
\prod_{i=0}^{t}
\left(I-\frac{1}{i+1}\Lambda\right)
\longrightarrow e_s^\top e_s.
\]
Moreover, for every $\nu_i\neq 0$,
\[
\prod_{j=0}^{t}
\left|1-\frac{1}{j+1}\nu_i\right|
=
\mathcal{O}\left(t^{-\operatorname{Re}(\nu_i)}\right).
\]
Since the $s$-th row of $P^{-1}$ is a left eigenvector
corresponding to the eigenvalue $0$, it is a multiple of
$\hat v$. Hence $\hat vU=0$ gives $e_sP^{-1}U=0$. Therefore,
$\IE[D_t]=\mathcal{O}\left(t^{-\nu^*}\right)$. Now, since
$\Z_{t+1}
=
\Z_t\left(1-\frac{1}{t+1}\right)
+
\frac{\I_{t+1}\widetilde{A}}{t+1}$,
we have
\begin{eqnarray*}
\operatorname{Var}(\Z_{t+1})
&=&
\IE[\operatorname{Var}(\Z_{t+1}\mid \mathcal F_t)]
+
\operatorname{Var}(\IE[\Z_{t+1}\mid \FF_t])
\\
&=&
\frac{\widetilde{A}^\top}{(t+1)^2}
\IE[\operatorname{Var}(\I_{t+1}\mid \mathcal F_t)]\widetilde{A}
+
K_t^\top\operatorname{Var}(\Z_t)K_t.
\end{eqnarray*}
Define $N_t:=\widetilde{A}^\top \ \IE[\operatorname{Var}(\I_{t+1}\mid \mathcal F_t)]\widetilde{A}$. The entries of $N_t$ are uniformly bounded. Thus
\begin{eqnarray*}\label{Var_decay2}
\operatorname{Var}(D_{t+1})
&=&
U^\top\operatorname{Var}(\Z_{t+1})U
\\
&=&
U^\top
\left[
\sum_{k=0}^{t}
\frac{1}{(k+1)^2}
\left(\prod_{i=0}^{t-k-1}K_{t-i}^\top\right)
N_k
\left(\prod_{i=k+1}^{t}K_i\right)
\right]
U
\\
&=&
U^\top(P^{-1})^\top\Delta_tP^{-1}U,
\end{eqnarray*}
where
\[
\Delta_t:=
\sum_{k=0}^{t}
\frac{1}{(k+1)^2}
\left[
\prod_{i=0}^{t-k-1}
\left(I-\frac{1}{t-i+1}\Lambda\right)
\right]
P^\top N_kP
\prod_{i=k+1}^{t}
\left(I-\frac{1}{i+1}\Lambda\right).
\]
Let $\beta_k(p,q)$ be the $(p,q)$-th entry of $P^\top N_kP$. Since the entries of $N_k$ are uniformly bounded, so are the $\beta_k(p,q)$'s. The $(p,q)$-th entry of $\Delta_t$ is
\[
\Delta_t(p,q)
=
\sum_{k=0}^{t}
\frac{1}{(k+1)^2}
\left[
\prod_{i=0}^{t-k-1}
\left(1-\frac{1}{t-i+1}\nu_p\right)
\right]
\beta_k(p,q)
\prod_{i=k+1}^{t}
\left(1-\frac{1}{i+1}\nu_q\right).
\]
Using Euler's approximation for $(p,q)\neq(s,s)$, there is a constant $C'$ such that
\begin{eqnarray}\label{rates2}
|\Delta_t(p,q)|
&\leq&
C't^{-(\operatorname{Re}(\nu_p)+\operatorname{Re}(\nu_q))}
\sum_{k=1}^{t}
\frac{1}{k^{2-(\operatorname{Re}(\nu_p)+\operatorname{Re}(\nu_q))}}
\nonumber \\
&=&
\begin{cases}
\mathcal{O}\left(t^{-1}\right),
& \text{if } \operatorname{Re}(\nu_p)+\operatorname{Re}(\nu_q)>1,\\
\mathcal{O}\left(\frac{\log t}{t}\right),
& \text{if } \operatorname{Re}(\nu_p)+\operatorname{Re}(\nu_q)=1,\\
\mathcal{O}\left(t^{-(\operatorname{Re}(\nu_p)+\operatorname{Re}(\nu_q))}\right),
& \text{if } 0<\operatorname{Re}(\nu_p)+\operatorname{Re}(\nu_q)<1.
\end{cases}
\end{eqnarray}
Also, $\Delta_t(s,s)$ converges to
$\sum_{k=0}^{\infty}\frac{1}{(k+1)^2}\beta_k(s,s)
\eqqcolon C$. Then, $\Delta_{t} \to C e_{s}^\top e_{s}$ as $t \to \infty$. Using again $e_s^\top e_sP^{-1}U=0_{N\times N}$, we get that
$\operatorname{Var}(D_{t+1})=\mathcal{O}(t^{-\delta})$ for some $\delta>0$.
\end{proof}

To prove Theorem~\ref{thm:QM1} we need the following.
\begin{lemma}\label{lem:limit_Z}
Assume that {\rm \bf (B)} and {\rm \bf (C)} hold, and suppose that $p_1+p_2=d_1+d_2$. Then $\Z_t\tilde J$ converges almost surely. If
$\Z_\infty$ denotes its almost sure limit, then
\[
\Z_\infty(Q\widetilde{A}-I)=-\mathbf 1_{\mathcal D}\widetilde{A}.
\]
\end{lemma}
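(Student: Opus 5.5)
The plan is to reduce the statement to a closed, finite-dimensional stochastic approximation for the block-averaged vector $\Z_t\tilde J$.

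\textbf{Step 1: the closed recursion.} From \eqref{noderecurnew} in vector form,
\[
\Z_{t+1}=\Z_t+\frac1{t+1}\bigl[\Z_t(Q\widetilde A-I)+\bone_{\mathcal D}\widetilde A+\Delta M_{t+1}\bigr].
\]
Multiply on the right by $\tilde J$. Lemma~\ref{lem:commute} gives $(Q\widetilde A-I)\tilde J=\tilde J(Q\widetilde A-I)$, and $\tilde J^2=\tilde J$. By {\bf (C)} (with $U=D_1,D_2$), $\bone_{\mathcal D}\widetilde A$ is constant on blocks, so $\bone_{\mathcal D}\widetilde A\tilde J=\bone_{\mathcal D}\widetilde A$. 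Hence $Y_t:=\Z_t\tilde J$ satisfies
\[
Y_{t+1}=Y_t+\frac1{t+1}\bigl[Y_t(Q\widetilde A-I)+\bone_{\mathcal D}\widetilde A+\Delta M_{t+1}\tilde J\bigr].
\]
This is a linear stochastic approximation on the invariant subspace $\mathcal S$ of block-constant row vectors, which has dimension at most $4$. The noise is bounded and the step size is $1/(t+1)$.

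\textbf{Step 2: spectral splitting on $\mathcal S$.} Every eigenvalue $\eta$ of $Q\widetilde A$ satisfies $|\eta|\le 1$, because column sums of $\widetilde A$ are one and $Q$ only changes signs. So on $\mathcal S$ the eigenvalues of $Q\widetilde A-I$ are either $0$ or have strictly negative real part. By Lemma~\ref{lem:eigenvector}, the real left null vectors are multiples of $\hat v$, and $\hat v\in\mathcal S$.

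\emph{Null direction.} The coordinate $Y_t\hat v^\top=Z_t^{P_1}+Z_t^{D_2}-Z_t^{P_2}-Z_t^{D_1}$ is a bounded martingale, or a bounded super/submartingale, by Lemma~\ref{lem:GPA_structure} and Remark~\ref{d1d2}. It therefore converges almost surely.

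\emph{Complementary directions.} Choose a complementary $(Q\widetilde A)$-invariant subspace of $\mathcal S$; for example, take the generalized eigenspaces of the nonzero eigenvalues, which may involve Jordan blocks. Projecting the recursion onto it gives a linear stochastic approximation whose drift matrix has all eigenvalues with negative real part. The constant forcing term is $\bone_{\mathcal D}\widetilde A$ projected onto this subspace. Any coupling with the null coordinate, which now converges, can be absorbed into an almost surely vanishing perturbation. Lemma~\ref{lem:sas} then gives almost sure convergence of these coordinates.

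Combining the two parts, $Y_t=\Z_t\tilde J$ converges almost surely to some limit $\Z_\infty$.

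\textbf{Main obstacle.} The step I expect to be hardest is making this splitting rigorous. The zero eigenvalue could a priori be coupled to the stable part, or the constant forcing could have a component along the null direction, which would produce a $\log t$ drift. This is where I expect $p_1+p_2=d_1+d_2$, together with {\bf (B)}, to enter: it should give $\bone_{\mathcal D}\widetilde A$ a vanishing component along the right null vector on $\mathcal S$. That right null vector is $\hat v^\top$ or a block-constant analogue, and the condition should amount to $\bone_{\mathcal D}\widetilde A\hat v^\top=0$. I would verify this first, using the block form \eqref{adjacency-partition} and $|V|\mu_{UV}=|U|r_{UV}$ from the proof of Lemma~\ref{lem:commute}. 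If it fails, I would fall back on the super/submartingale property from Remark~\ref{d1d2}, which already handles the null coordinate directly without needing the forcing to vanish.

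\textbf{Step 3: identifying the limit.} Sum the recursion from Step 1:
\[
Y_t-Y_{t_0}=\sum_{k=t_0}^{t-1}\frac{Y_k(Q\widetilde A-I)+\bone_{\mathcal D}\widetilde A}{k+1}+\sum_{k=t_0}^{t-1}\frac{\Delta M_{k+1}\tilde J}{k+1}.
\]
The martingale series converges almost surely because its increments are bounded and $\sum_k (k+1)^{-2}<\infty$. Suppose $g:=\Z_\infty(Q\widetilde A-I)+\bone_{\mathcal D}\widetilde A\neq0$. Then, since $Y_k\to\Z_\infty$, the drift sum grows like $g\log t$, which contradicts boundedness of $Y_t$. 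Therefore $g=0$, that is, $\Z_\infty(Q\widetilde A-I)=-\bone_{\mathcal D}\widetilde A$.
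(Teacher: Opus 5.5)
Your Steps 1 and 3 coincide with the paper's proof. You close the recursion for $Y_t=\Z_t\tilde J$ via Lemma~\ref{lem:commute} and $\bone_{\mathcal D}\widetilde A\tilde J=\bone_{\mathcal D}\widetilde A$, and you identify the limit through the divergence of $\sum_t(t+1)^{-1}$. The convergence step is where you differ. The paper multiplies (B) on the left by $\bone$ to get $N\mu^{out}_{\mathcal P}=p_1+p_2$ and $N\mu^{out}_{\mathcal D}=d_1+d_2$. The hypothesis $p_1+p_2=d_1+d_2$ then forces $\mu^{out}_{\mathcal P}=\mu^{out}_{\mathcal D}=\tfrac12$, and the paper quotes the final part of Theorem~\ref{thm:AS_average}. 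There, $(Z_t^{P_1}+Z_t^{D_1},Z_t^{P_1}+Z_t^{P_2},Z_t^{D_1}+Z_t^{D_2})$ is an explicit three-dimensional scheme whose drift matrix has $-1$ as its only eigenvalue, and the remaining direction is the martingale $Z_t^{P_1}+Z_t^{D_2}-Z_t^{P_2}-Z_t^{D_1}$. So the paper performs the same null/stable split as you, but in explicit coordinates. That computation is the only place where $p_1+p_2=d_1+d_2$ enters.

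As written, your Step 2 leaves one point unproved. You decompose $\mathcal S$ as $\operatorname{span}(\hat v)$ plus the generalized eigenspaces of the nonzero eigenvalues. This requires $0$ to be an algebraically simple eigenvalue of $Q\widetilde A-I$ on $\mathcal S$, but Lemma~\ref{lem:eigenvector} only gives geometric multiplicity one. The paper upgrades this using (D), which is not a hypothesis of this lemma, and checking the forcing, as you plan, does not address it. Simplicity follows from facts you already use. The proof of Lemma~\ref{lem:GPA_structure} shows $(Q\widetilde A-I)\hat v^\top=0$ under (B), and $\hat v\hat v^\top=N\neq0$. Hence $\mathcal K:=\{y\in\mathcal S:\ y\hat v^\top=0\}$ is invariant under right multiplication by $Q\widetilde A-I$, and $\mathcal S=\operatorname{span}(\hat v)\oplus\mathcal K$. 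By Lemma~\ref{lem:eigenvector}, the restriction to $\mathcal K$ has trivial kernel, since a real null vector in $\mathcal K$ would be $c\hat v$ with $cN=0$. So all eigenvalues there have negative real part. Since also $\hat v(Q\widetilde A-I)=0$, the $\hat v$-coordinate and the $\mathcal K$-component decouple exactly, and nothing needs to be absorbed as a perturbation.

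Your guess about where $p_1+p_2=d_1+d_2$ enters is also off. Under (B) the row sums of $\widetilde A$ equal one, so the forcing in the null direction is $\bone_{\mathcal D}\widetilde A\hat v^\top=d_1-d_2$. This vanishes under (B) alone. The edges leaving $P_1\cup D_1$ are exactly the edges entering $P_1\cup D_2$, and they carry total weight $p_1+d_1$ by the row sums and $p_1+d_2$ by the column sums. So your completed argument never uses $p_1+p_2=d_1+d_2$ and gives convergence of $\Z_t\tilde J$ under (B) and (C) alone. The paper's route is more hands-on but genuinely needs that hypothesis.
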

\begin{proof}
Under the standing assumption {\rm \bf (A2)}, we have
$\bone\widetilde A=\bone$. Multiplying the two identities
in {\rm \bf (B)} on the left by $\bone$ therefore gives
\[
N\mu_{\mathcal P}^{out}=p_1+p_2,
\qquad
N\mu_{\mathcal D}^{out}=d_1+d_2.
\]
Since $p_1+p_2=d_1+d_2$ and these two quantities sum to $N$,
it follows that
\[
\mu_{\mathcal P}^{out}
=\mu_{\mathcal D}^{out}
=\frac12.
\]
In particular,
$\mu_{\mathcal P}^{out}-\mu_{\mathcal D}^{out}=0<1$.
The final part of Theorem~\ref{thm:AS_average} now implies
that all four block sums
$Z_t^{P_1},Z_t^{P_2},Z_t^{D_1},Z_t^{D_2}$
converge almost surely. Consequently,
\[
Y_t:=\Z_t\tilde J
=
\frac{Z_t^{P_1}}{p_1}\bone_{P_1}
+\frac{Z_t^{P_2}}{p_2}\bone_{P_2}
+\frac{Z_t^{D_1}}{d_1}\bone_{D_1}
+\frac{Z_t^{D_2}}{d_2}\bone_{D_2}
\]
converges almost surely. Denote its limit by $\Z_\infty$. Note that $H = Q \tilde{A} - I$.
Taking conditional expectations in \eqref{noderecurnew}
and writing the result in vector form gives
\[
\IE[\Z_{t+1}\mid\FF_t]
=
\Z_t+\frac{1}{t+1}\bigl(\Z_tH+ \bone_{\mathcal D}\widetilde A \bigr).
\]
By Lemma~\ref{lem:commute}, $\tilde J$ commutes with $H$.
Moreover, {\rm \bf (C)} implies that
$b:=\bone_{\mathcal D}\widetilde A$ is constant on each
of the four blocks, so $b\tilde J=b$. Multiplying the
preceding identity on the right by $\tilde J$ yields
\[
\IE[Y_{t+1}\mid\FF_t]
=
Y_t+\frac{1}{t+1}\bigl(Y_tH+\bone_{\mathcal D}\widetilde A\bigr).
\]
Thus,
\[
Y_{t+1}
=
Y_t+\frac{1}{t+1} (Y_tH+\bone_{\mathcal D}\widetilde A)+\Delta R_{t+1},
\qquad
\Delta R_{t+1}
:=
Y_{t+1}-\IE[Y_{t+1}\mid\FF_t].
\]

The sequence $\{\Delta R_{t+1}\}$ is a martingale
difference sequence. More explicitly, writing
$\I_{t+1}$ for the row vector of sampling indicators,
\eqref{noderecurnew} gives
$\Delta R_{t+1} = \frac{1}{t+1}
\left(\I_{t+1}-\IE[\I_{t+1}\mid\FF_t]\right)\widetilde A\tilde J$. Since the indicators are bounded and the matrices are fixed, there is a deterministic constant $C<\infty$
such that $\IE\!\left[\|\Delta R_{t+1}\|_2^2\mid\FF_t\right]
\leq \frac{C}{(t+1)^2}$. Hence the partial sums of $\sum_{t=0}^{\infty}\Delta R_{t+1}$ form an $\mathcal L^2$-bounded martingale and converge almost surely. Summing the recursion for $Y_t$ gives
\[
\sum_{t=0}^{n}\frac{1}{t+1}(Y_tH+\bone_{\mathcal D}\widetilde A)
=
Y_{n+1}-Y_0-\sum_{t=0}^{n}\Delta R_{t+1}.
\]
The right-hand side converges almost surely. Therefore
the vector series on the left converges almost surely.
On the other hand,
\[
Y_tH+\bone_{\mathcal D}\widetilde A \longrightarrow\Z_\infty H+ \bone_{\mathcal D}\widetilde A
\qquad\text{almost surely}.
\]
If any coordinate of this limiting vector were nonzero,
the corresponding coordinate of $Y_tH+\bone_{\mathcal D}\widetilde A$ would
eventually have a fixed sign and absolute value bounded
away from zero. Since
$\sum_{t=0}^{\infty}(t+1)^{-1}=\infty$, the corresponding
coordinate series would then diverge, a contradiction.
Thus $\Z_\infty H+\bone_{\mathcal D}\widetilde A=0$, which is precisely
\[
\Z_\infty(Q\widetilde A-I)
=-\bone_{\mathcal D}\widetilde A.
\]\end{proof}

The final result needed to Prove the main partial convergence result stated in Theorem~\ref{thm:randomlimit} is Theorem \ref{thm:QM1}, the proof of which is given below.
\begin{proof}[Proof of Theorem \ref{thm:QM1}] We show that $\Z_t$ is a quasi-martingale. By Lemma~\ref{lem:limit_Z}, $\Z_t\tilde J$ converges almost surely. Denote its limit by $\Z_\infty$. Set $Y_t=\Z_t\tilde J$, and $\tilde{\Z}_t=Y_t-\Z_\infty$. From \eqref{eq:EVar}, \[ \IE[\Z_{t+1}\mid\mathcal F_t]-\Z_t = \frac{1}{t+1}\left[\Z_tH+\mathbf 1_{\mathcal D}\widetilde{A}\right]. \] Using $\Z_tH=\Z_t(I-\tilde J)H+Y_tH$ and $\Z_\infty H+\mathbf 1_{\mathcal D}\widetilde A=0$ from Lemma~\ref{lem:limit_Z}, we get \[ \Z_tH+\mathbf 1_{\mathcal D}\widetilde{A} = \Z_t(I-\tilde J)H+\tilde{\Z}_tH. \] Therefore, $\sum_{t=0}^{\infty} \IE[\|\IE[\Z_{t+1}\mid\mathcal F_t]-\Z_t\|] \leq \sum_{t=0}^{\infty} \frac{\IE[\|\Z_t(I-\tilde J)H\|]}{t+1} + \sum_{t=0}^{\infty} \frac{\IE[\|\tilde{\Z}_tH\|]}{t+1}$. We first consider the term $\Z_t(I-\tilde J)H$. We apply Proposition~\ref{prop:Var} with
$H_1=(I-\tilde J)H$. By {\rm \bf (C)}, $\mathbf 1_{\mathcal D}\widetilde{A}$ is constant on the four parts of the partition, so $\mathbf 1_{\mathcal D}\widetilde{A}\tilde J
=\mathbf 1_{\mathcal D}\widetilde{A}$, and hence
$\mathbf 1_{\mathcal D}\widetilde{A}H_1=0$. By Lemma~\ref{lem:commute}, $\tilde J$
commutes with $Q\widetilde A$; since $H=Q\widetilde{A}-I$ also commutes with $Q\widetilde{A}$, we
get $Q\widetilde AH_1=H_1Q\widetilde A$. Finally, since $\hat v$ is constant on the four parts of
the partition, $\hat v\tilde J=\hat v$, and hence $\hat vH_1=0$. Hence Proposition~\ref{prop:Var} gives $\IE[\Z_tH_1]=\mathcal O(t^{-a_1})$ and $\operatorname{Var}(\Z_tH_1)=\mathcal O(t^{-b_1})$ for some $a_1,b_1>0$. Thus $\IE[\|\Z_tH_1\|]=\mathcal O(t^{-c_1})$ for some $c_1>0$. Since $t+1$ grows linearly, \[ \sum_{t=0}^{\infty} \frac{\IE[\|\Z_t(I-\tilde J)H\|]}{t+1}<\infty. \] For the second term, set
$G_t:=Y_tH+\mathbf 1_{\mathcal D}\widetilde{A}$. By Lemma~\ref{lem:limit_Z}, $\Z_\infty H+\mathbf 1_{\mathcal D}\widetilde{A}=0$, and hence
$G_t=(Y_t-\Z_\infty)H=\tilde{\Z}_tH$. From the recursion for $Y_t$ obtained in
Lemma~\ref{lem:limit_Z},
\[
Y_{t+1} = Y_t+\frac{1}{t+1} \left(Y_tH+\mathbf 1_{\mathcal D}\widetilde{A}\right) +\Delta R_{t+1},
\]
where $\Delta R_{t+1}$ is a martingale difference and
$\sum_t\IE[\|\Delta R_{t+1}\|^2\mid\FF_t]<\infty$ almost surely. Multiplying the above recursion by $H$
and adding $\mathbf 1_{\mathcal D}\widetilde{A}$ gives
\[
G_{t+1}=G_tK_t+\Delta R_{t+1}H.
\]

Now write $I-Q\widetilde{A}=P\Lambda P^{-1}$, where
$\Lambda=\operatorname{diag}(\nu_1,\ldots,\nu_N)$, and choose $s$ such
that $\nu_s=0$. Since
$H=-P\Lambda P^{-1}$, we have
$HPe_s^\top=-P\Lambda e_s^\top=0$. Therefore
$G_tPe_s^\top=0$ and $\Delta R_{t+1}HPe_s^\top=0$. Thus, if $U_t=G_tP$ and
$\xi_{t+1}=\Delta R_{t+1}HP$, then $U_t(s)=\xi_{t+1}(s)=0$, while for
$i\neq s$,
\[
U_{t+1}(i)
= U_t(i)\left(1-\frac{\nu_i}{t+1}\right) + \xi_{t+1}(i).
\]
Since $\operatorname{Re}(\nu_i)>0$ for $i\neq s$ and
$\IE[\|\xi_{t+1}\|^2\mid\FF_t]=\mathcal O(t^{-2})$, iterating the scalar recursion and using the same Euler-product estimates as in Proposition~\ref{prop:Var} gives
$\IE[\|G_t\|]=\mathcal O(t^{-a_2})$ for some $a_2>0$. Consequently, since
$G_t=\tilde{\Z}_tH$,
\[
\sum_{t=1}^{\infty}
\frac{\IE[\|\tilde{\Z}_tH\|]}{t+1}
=
\sum_{t=1}^{\infty}
\frac{\IE[\|G_t\|]}{t+1}
<\infty.
\]
Combining the two estimates, we get
\[
\sum_{t=0}^{\infty}
\IE[\|\IE[\Z_{t+1}\mid\mathcal F_t]-\Z_t\|]<\infty.
\]
Since $\Z_t$ is bounded, it follows that $\Z_t$ is a bounded quasimartingale. Hence $\Z_t$ admits an almost sure limit.
\end{proof}
\section{Embedding of classical models} \label{sec:App}
This section illustrates how several classes of interacting stochastic systems can be represented within the framework introduced in Section~\ref{sec:model}. In each case, we identify the sampling graph, reinforcement graph, and the corresponding parameters $(W,Q,\widetilde A,C)$.

\subsection{Edge-dependent reinforcement: interacting urns} \label{sec:urns}
Suppose a two-colour urn is placed at each vertex and let $Z_t(i)$ denote the fraction of white balls in urn at vertex $i$ at time $t$. Consider the following reinforcement matrix along the directed edge $j\to i$,
\begin{equation}\label{eq:matrix_edge_reinforcement}
    R_{ji} = \begin{pmatrix}
        \alpha_{ji} & m_{ji}-\alpha_{ji} \\
        m_{ji} - \beta_{ji} & \beta_{ji}
    \end{pmatrix}
\end{equation}
Like in \cite{kaur2025interacting}, we categorize the reinforcement as 
\begin{itemize}
    \item P\'olya type if $R_{ji} = m_{ji} I_2$, where $I_2$ denotes a $2 \times 2$ identity matrix.
    \item Non-P\'olya type if  $0 <\alpha_{ji} + \beta_{ji} < 2m_{ji}.$
\end{itemize}

\begin{proposition} \label{prop:urn}
The normalized interacting urn model satisfies
\eqref{eq:main_recur_vec}, with
$\mu_{ji}=(\alpha_{ji}+\beta_{ji}-m_{ji})/\tilde m_i$
and $c_{ji}=(m_{ji}-\beta_{ji})/\tilde m_i$,
where $\tilde m_i=\sum_{j\in N(i)}m_{ji}$.
The remainder is given in the proof below.
\end{proposition}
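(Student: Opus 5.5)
We first fix the urn dynamics so the recursion can be derived. Urn $i$ has $T_t(i)$ balls at time $t$, of which $W_t(i)$ are white, so $Z_t(i)=W_t(i)/T_t(i)$. At each step, every urn $j$ produces a draw indicator $\I_{t+1}(j)$ (white $=1$) according to \eqref{eq:sampling_indicator}. Then, for every edge $j\to i$, urn $i$ receives $m_{ji}$ balls. Of these, $\alpha_{ji}$ are white if $\I_{t+1}(j)=1$, and $m_{ji}-\beta_{ji}$ are white if $\I_{t+1}(j)=0$. The white balls added to urn $i$ at step $t+1$ therefore number
\[
\sum_{j\in N(i)}\bigl[(\alpha_{ji}+\beta_{ji}-m_{ji})\I_{t+1}(j)+(m_{ji}-\beta_{ji})\bigr],
\]
and the total number of balls added is deterministic and equal to $\tilde m_i$. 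Hence $T_t(i)=T_0(i)+t\tilde m_i$.

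Next we write the exact recursion:
\[
Z_{t+1}(i)=Z_t(i)+\frac{\tilde m_i}{T_{t+1}(i)}\Bigl[\sum_{j\in N(i)}\bigl(\mu_{ji}\I_{t+1}(j)+c_{ji}\bigr)-Z_t(i)\Bigr],
\]
with $\mu_{ji},c_{ji}$ as in the statement. This follows from $W_{t+1}=W_t+(\text{added white})$ and $T_{t+1}=T_t+\tilde m_i$ by dividing through. The step size is $\gamma_t(i)=\tilde m_i/(T_0(i)+(t+1)\tilde m_i)=1/(t+1+b_i)$, where $b_i=T_0(i)/\tilde m_i$. We rewrite this as $\frac1{t+1}$ plus a correction and define
\[
\epsilon_{t+1}(i):=\bigl((t+1)\gamma_t(i)-1\bigr)\Bigl[\sum_{j}\bigl(\mu_{ji}\I_{t+1}(j)+c_{ji}\bigr)-Z_t(i)\Bigr]=-\frac{b_i}{t+1+b_i}\Bigl[\cdots\Bigr].
\]
With this definition \eqref{eq:gen_recur_perturb} holds exactly. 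Decomposing $\I_{t+1}$ into its conditional mean and a martingale difference, exactly as in the derivation of \eqref{eq:main_recur_vec}, then gives \eqref{eq:main_recur_vec} with the stated $\mu_{ji},c_{ji}$ and the corresponding $C$.

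It remains to check the side conditions. The bracket is bounded, so $\|\epsilon_{t+1}\|=O(t^{-1})$ deterministically, which is \eqref{eq:error_one_step_bound}. We also need $\mu_{ji}\in[-1,1]$ and $c_{ji}\ge0$, and these follow from $0\le\alpha_{ji},\beta_{ji}\le m_{ji}$ and $m_{ji}\le\tilde m_i$. Condition \eqref{reinforcement_assumption} holds because $\sum_j(\mu_{ji}\I(j)+c_{ji})$ equals (added white)$/\tilde m_i\in[0,1]$ for every choice of indicators. Choosing the indicators to minimise or maximise this sum gives exactly the two inequalities. Staying in $[0,1]$ is automatic since $Z_t(i)$ is a ball fraction.

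The main subtlety is measurability. In the paper's setup $\epsilon_{t+1}$ is $\FF_{t+1}$-measurable rather than $\FF_t$-measurable, because it involves $\I_{t+1}$. If this is an issue, we would move the part $-\frac{b_i}{t+1+b_i}\sum_j\mu_{ji}\bigl(\I_{t+1}(j)-\IE[\I_{t+1}(j)\mid\FF_t]\bigr)$ into a rescaled martingale difference. Only the predictable remainder, which is still $O(t^{-1})$, would then be left in $\epsilon$.
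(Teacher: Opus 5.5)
Your proposal is correct and follows the paper's own route. The paper also writes the exact urn recursion with step $\tilde m_i/T_{t+1}(i)$ and defines the remainder as $\bigl(\frac{(t+1)\tilde m_i}{T_{t+1}(i)}-1\bigr)$ times the same bracket, written with $\Delta M_{t+1}(i)$ split off, and you additionally verify the parameter constraints and the $O(t^{-1})$ bound explicitly; your measurability worry is unnecessary, since the model only requires $\epsilon_t$ to be $\FF_t$-measurable (adapted), which your $\FF_{t+1}$-measurable $\epsilon_{t+1}$ already satisfies.
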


\begin{proof}
The evolution of the number of white balls is given by
$$W_{t+1}(i)=W_t(i) + \sum_{j \in N(i)}\{ \alpha_{ji} \I_{t+1}(j) + (m_{ji}-\beta_{ji})(1-\I_{t+1}(j))\},$$
where
$\I_{t+1}(j)\mid\FF_t\sim
\operatorname{Ber}\!\left(
(1-q_j)+(2q_j-1)\sum_{k\in\VV}w_{kj}Z_t(k)
\right)$.
Note that the total number of balls in urn $i$ updates as $T_{t+1}(i)=T_t(i) + \tilde{m}_i$, where $\tilde{m}_i=\sum_{j \in N(i)}m_{ji}$. We get
\begin{eqnarray*}
Z_{t+1}(i)&=
&\frac{Z_t(i)T_t(i)}{T_{t+1}(i)} +\frac{1}{T_{t+1}(i)}\left( \sum_{j \in N(i)}(\alpha_{ji}+\beta_{ji}-m_{ji})\I_{t+1}(j) +   \sum_{j \in N(i)}(m_{ji}-\beta_{ji} )            \right) \\
&=& Z_t(i)\left(1-\frac{\tilde{m}_i}{T_{t+1}(i)} \right)+\frac{\tilde{m}_i}{T_{t+1}(i)}\left( \sum_{j \in N(i)}\mu_{ji}\I_{t+1}(j) +   \sum_{j \in N(i)}\frac{m_{ji}-\beta_{ji}}{\tilde{m}_i} \right)
\end{eqnarray*}
where $\mu_{ji}=\frac{\alpha_{ji}+\beta_{ji}-m_{ji}}{\tilde{m}_i}$. 

Hence,
\begin{align*}
Z_{t+1}(i) &=Z_t(i)\left(1-\frac{\tilde{m}_i}{T_{t+1}(i)} \right)+ \frac{\tilde{m}_i}{T_{t+1}(i)}\Big[ 
\sum_{j \in N(i)} \left(\mu_{ji} \mathbb{E}\left[ \I_{t+1}(j) \mid \mathcal{F}_t \right] + \frac{m_{ji} - \beta_{ji}}{\tilde{m}_i} \right) + \Delta M_{t+1}(i)\Big]\\
&=Z_t(i)\left(1-\frac{1}{t+1} \right)+ \frac{1}{t+1}\Big[ 
\sum_{j \in N(i)} \left(\mu_{ji} \mathbb{E}\left[ \I_{t+1}(j) \mid \mathcal{F}_t \right] + \frac{m_{ji} - \beta_{ji}}{\tilde{m}_i} \right) + \Delta M_{t+1}(i) +\epsilon_{t+1}(i)\Big],
\end{align*}
where  $ \mathbb{E}\left[ \I_{t+1}(j) \mid \mathcal{F}_t \right] =(1 - q_j) + (2q_j - 1) \sum_{k\in\VV} w_{kj} Z_t(k)$, $\Delta M_{t+1}(i)=\sum_{j \in N(i)} \mu_{ji}\big(\I_{t+1}(j)-\mathbb{E}\left[ \I_{t+1}(j) \mid \mathcal{F}_t \right]\big)$, and
\begin{align*}
\epsilon_{t+1}(i) 
&= \left(\frac{(t+1)\tilde{m}_i}{T_{t+1}(i)}-1\right)\Bigg[ 
\sum_{j \in N(i)} \left(\mu_{ji}\mathbb{E}\left[ \I_{t+1}(j) \mid \mathcal{F}_t \right] + \frac{m_{ji} - \beta_{ji}}{\tilde{m}_i} \right)
 + \Delta M_{t+1}(i) - Z_t(i) \Bigg]. \\
 \end{align*} 
\end{proof}
Define $E^{(n)}_a = \{ (v_1, v_2): \exists v_3, \dots, v_n \text{ such that }  v_1 \to v_2 \to v_3 \to \dots \to v_n \to a \}$. We say that the edges $e_1=(v_1, v_2)$ and $e_n=(v_n, a)$ are connected by a path of length $n$. As a consequence of Theorem~\ref{thm:deterministic_limit}, we get the following convergence result for the fraction of white balls across urns.
\begin{corollary}
   Suppose that the sampling and reinforcement graphs have the
same underlying strongly connected directed graph.
Then $WQ\widetilde A-I$ is invertible provided at least one
of the following conditions holds.
\begin{itemize}
  \item[(i)] There exists $r^* \in [N]$ such that $|2q_{r^*}-1|<1$ and either there exists an odd cycle in the graph or there exists another vertex $r^\prime$ such that $|2q_{r^\prime} - 1|<1$ and $r^* \rightsquigarrow r^\prime$ (or $r^\prime \rightsquigarrow r^*$) via a path of odd length.
  \item[(ii)] There exists a non-P\'olya edge $e_1=(u_1, v_1)$ in the graph and either the graph has an odd cycle or there exists another non-P\'olya edge $e_2 = (u_2, v_2)$ such that $u_1 \rightsquigarrow u_2$ (or $u_2 \rightsquigarrow u_1$) via a path of odd length.
  \item[(iii)] All edges are P\'olya-type and $q_v\in\{0,1\}$ for every $v\in\VV$. $\GG$ contains an odd cycle and does not admit a partition under the GPA.
\end{itemize}
\end{corollary}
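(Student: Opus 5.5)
The plan is to reduce each of (i)--(iii) to an earlier invertibility criterion. The dictionary is Proposition~\ref{prop:urn}. For each edge $j\to i$ we have $\mu_{ji}=(\alpha_{ji}+\beta_{ji}-m_{ji})/\tilde m_i$ and $\tilde m_i=\sum_{j\in N(i)}m_{ji}$. Hence
\[
\rho_i=\sum_{j\in N(i)}|\alpha_{ji}+\beta_{ji}-m_{ji}|/\tilde m_i .
\]
Since $0\le\alpha_{ji},\beta_{ji}\le m_{ji}$, each term satisfies $|\alpha_{ji}+\beta_{ji}-m_{ji}|\le m_{ji}$. Equality holds exactly when $\alpha_{ji}+\beta_{ji}\in\{0,2m_{ji}\}$, that is, when the edge is P\'olya type (or the degenerate anti-P\'olya case, which I treat as excluded or as Pólya-like by the paper's convention). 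It follows that a vertex $i$ is critical if and only if every incoming edge is P\'olya type. In that case $\mu_{ji}=m_{ji}/\tilde m_i>0$, so $i$ is in fact positive-critical. In particular, the hypothesis of Theorem~\ref{thm:invertibility_1}, that $\mu_{ji}\neq 0$ for every in-edge of a critical vertex, holds automatically. The standing assumption (A1) is exactly the hypothesis that the sampling and reinforcement graphs coincide and are strongly connected.

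\emph{Case (i).} This is literally condition (i) of Theorem~\ref{thm:invertibility_1}, since $|2q_r-1|<1$ is the same as $r$ being mixed-sampling. Invertibility follows directly.

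\emph{Case (ii).} A non-P\'olya edge $e_1=(u_1,v_1)$ makes $v_1$ subcritical, by the dictionary above. If the graph has an odd cycle, condition (ii) of Theorem~\ref{thm:invertibility_1} applies. Suppose instead that there is a second non-P\'olya edge $e_2=(u_2,v_2)$ with $u_1\rightsquigarrow u_2$ by a path of odd length. Then $v_2$ is also subcritical, and I need an odd-length path between $v_1$ and $v_2$.

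This step is the main obstacle, because the hypothesis is phrased through the tails $u_1,u_2$, not the heads $v_1,v_2$. I would not look for such a path directly. Instead I would rerun the layer argument from the proof of Theorem~\ref{thm:invertibility_1}. In that proof, the vertices in even layers $N_l^{(2m)}$ must be critical; more precisely, they must have $\sum_r|2q_r-1||\mu_{rl}|=1$. Equality in the triangle inequality then forces every in-edge $(r,x)$ of an even-layer vertex $x$ to be P\'olya type, with $r$ lying in the adjacent odd layer. So the tail of every in-edge of an even-layer vertex lies in an odd layer. A non-P\'olya edge $(u,v)$ therefore cannot have its head $v$ in an even layer. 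Equivalently, its tail $u$ lies in no odd layer $N_l^{(2m+1)}$, because such a $u$ has a successor $v$ in the even layer, which is again forbidden. Now, if $u_1\rightsquigarrow u_2$ by an odd path, then $u_1$ reaches $l$ with one parity and $u_2$ reaches $l$ with the opposite parity, so one of $u_1,u_2$ lies in an odd layer relative to $l$. This gives a contradiction, exactly as in the proof of Theorem~\ref{thm:invertibility_1}. Extending this parity bookkeeping carefully from vertices to edges is the main point to check.

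\emph{Case (iii).} If all edges are P\'olya type, the dictionary shows every vertex is positive-critical. Together with $q_v\in\{0,1\}$ and the existence of an odd cycle, the hypotheses of Theorem~\ref{thm:GPA} are met. The failure of the GPA partition therefore gives invertibility.

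In all three cases Theorem~\ref{thm:deterministic_limit} then yields the almost sure deterministic limit. The urn remainder $\epsilon_t$ tends to zero almost surely, because $T_{t+1}(i)=T_0(i)+(t+1)\tilde m_i$ and so $(t+1)\tilde m_i/T_{t+1}(i)\to 1$.
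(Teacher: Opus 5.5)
Your route is the paper's: use the dictionary from Proposition~\ref{prop:urn}, under which a vertex is subcritical iff some incoming edge is non-P\'olya, and every in-edge of a critical vertex has $|\mu_{ji}|=m_{ji}/\tilde m_i>0$, so the hypothesis of Theorem~\ref{thm:invertibility_1} holds automatically. Then (i) and the odd-cycle half of (ii) follow from Theorem~\ref{thm:invertibility_1}, and (iii) from Theorem~\ref{thm:GPA}. You are also right that the second half of (ii) is stated through the tails $u_1,u_2$, while subcriticality sits at the heads $v_1,v_2$. The paper's one-line proof passes over this.

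The step you use to bridge tails and heads is not justified as written. Knowing $u\in N_l^{(2m+1)}$ only tells you that the next vertex on \emph{some} walk from $u$ to $l$ lies in an even layer. It says nothing directly about the particular head $v$ of the non-P\'olya edge $(u,v)$, so ``such a $u$ has a successor $v$ in the even layer'' is a non sequitur. The conclusion is true, but the missing ingredient is parity.

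In the branch you are in, the odd-cycle alternative has already been disposed of, so $\mathcal G$ has no odd cycle. By strong connectivity every closed walk is then even, and all walks between two fixed vertices have the same length parity. Hence $\mathcal G$ splits into classes $U_1,U_2$ with every edge switching class. With this you need no edge-level layer bookkeeping at all:
\begin{itemize}
\item an odd walk $u_1\rightsquigarrow u_2$ puts $u_1,u_2$ in opposite classes;
\item so their out-neighbours $v_1,v_2$ are in opposite classes;
\item so any walk from $v_1$ to $v_2$, which exists by strong connectivity, has odd length.
\end{itemize}
Theorem~\ref{thm:invertibility_1}(ii) then applies verbatim to the two subcritical vertices $v_1,v_2$.

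Your anti-P\'olya caveat is unnecessary. Since $|\alpha_{ji}+\beta_{ji}-m_{ji}|<m_{ji}$ iff $0<\alpha_{ji}+\beta_{ji}<2m_{ji}$, ``subcritical iff some non-P\'olya in-edge'' holds regardless, and that is all (i)--(ii) use. In (iii) all edges are P\'olya by hypothesis.
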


\begin{proof}
    Observe that a vertex $v$ is subcritical if and only if one of the incoming edges at $v$ is of non-P\'olya type. The proof then follows from the proof of Theorem~\ref{thm:invertibility_1}, \ref{thm:GPA} and Proposition~\ref{prop:urn}.  
\end{proof}

Remark~\ref{rem:Weak} applies for weakly connected graphs. Proposition~\ref{prop:evencycle} shows how the placement of subcritical vertices (in this case P\'olya and non-P\'olya edges), together with the identities of the vertices as preferential, de-preferential, or neither, can change whether $WQ\widetilde A-I$ is invertible. We illustrate this through an example on the $4$ vertices. 
\begin{example}[A 4-cycle]\label{ex:4cycle}
Let $k=4$. Then,  the two-step components are $\VV_1=\{1,3\}$ and $\VV_2=\{2,4\}$, and
\[
\det(WQ\widetilde A-I)
=
\left(1-(2q_1-1)(2q_3-1)\mu_{12}\mu_{34}\right)
\left(1-(2q_2-1)(2q_4-1)\mu_{23}\mu_{41}\right).
\]
In the figures below, a blue vertex is preferential, a red
vertex is de-preferential, an edge labelled P is of P\'olya type and an edge labelled NP is of non-P\'olya type. A white vertex carries no colour-coded assumption; its status is only what is specified in the corresponding case.

\begin{itemize}
\item[(i)] Let $q_2=q_4=1$ and $\mu_{23}=\mu_{41}=1$.
\begin{figure}[H]
\centering
\begin{tikzpicture}[->, thick, node distance=2cm]
    \node[circle, draw, minimum size=1cm] (1) at (0,0) {1};
    \node[circle, draw, minimum size=1cm, fill=blue!70] (2) at (0,2) {2};
    \node[circle, draw, minimum size=1cm] (3) at (2,2) {3};
    \node[circle, draw, minimum size=1cm, fill=blue!70] (4) at (2,0) {4};

    \path (1) edge node[left] {NP} (2);
    \path (2) edge node[above] {P} (3);
    \path (3) edge node[right] {NP} (4);
    \path (4) edge node[below] {P} (1);
\end{tikzpicture}
\caption{A 4-cycle with $2,4\in\PP$ and P\'olya edges $2\to3$ and $4\to1$.}
\label{fig:cycle4_case_i}
\end{figure}
In this case, the second determinant factor is zero, and hence
$WQ\widetilde A-I$ is singular. This conclusion only uses the component
$\VV_2=\{2,4\}$; no assumption about the component $\VV_1=\{1,3\}$ is needed.

\item[(ii)] Let $q_2=q_4=1$, assume  $q_1,q_3\notin\{0,1\}$ (that is,  $1$ and $3$ are not in $\PP\cup\DD$), and also let
$\mu_{12}=\mu_{34}=1$. 
\begin{figure}[H]
\centering
\begin{tikzpicture}[->, thick, node distance=2cm]
    \node[circle, draw, minimum size=1cm] (1) at (0,0) {1};
    \node[circle, draw, minimum size=1cm, fill=blue!70] (2) at (0,2) {2};
    \node[circle, draw, minimum size=1cm] (3) at (2,2) {3};
    \node[circle, draw, minimum size=1cm, fill=blue!70] (4) at (2,0) {4};

    \path (1) edge node[left] {P} (2);
    \path (2) edge node[above] {NP} (3);
    \path (3) edge node[right] {P} (4);
    \path (4) edge node[below] {NP} (1);
\end{tikzpicture}
\caption{A 4-cycle with $2,4\in\PP$ and P\'olya edges $1\to2$ and $3\to4$.}
\label{fig:cycle4_case_ii}
\end{figure}
Both factors are non-zero under these assumptions, and the matrix $WQ\widetilde A-I$ is invertible.

\item[(iii)] Let $q_1=0$, $q_2=q_3=q_4=1$, and
$\mu_{23}=\mu_{41}=1$. 
\begin{figure}[H]
\centering
\begin{tikzpicture}[->, thick, node distance=2cm]
    \node[circle, draw, minimum size=1cm, fill=red!75] (1) at (0,0) {1};
    \node[circle, draw, minimum size=1cm, fill=blue!70] (2) at (0,2) {2};
    \node[circle, draw, minimum size=1cm, fill=blue!70] (3) at (2,2) {3};
    \node[circle, draw, minimum size=1cm, fill=blue!70] (4) at (2,0) {4};

    \path (1) edge node[left] {NP} (2);
    \path (2) edge node[above] {P} (3);
    \path (3) edge node[right] {NP} (4);
    \path (4) edge node[below] {P} (1);
\end{tikzpicture}
\caption{A 4-cycle with $2,3,4\in\PP$, $1\in\DD$, and P\'olya edges $2\to3$ and $4\to1$.}
\label{fig:cycle4_case_iii}
\end{figure}
The second determinant factor is zero, and hence $WQ\widetilde A-I$ is
singular.

\item[(iv)] Let $q_1=0$, $q_2=q_3=q_4=1$, and
$\mu_{12}=\mu_{34}=1$. 
\begin{figure}[H]
\centering
\begin{tikzpicture}[->, thick, node distance=2cm]
    \node[circle, draw, minimum size=1cm, fill=red!75] (1) at (0,0) {1};
    \node[circle, draw, minimum size=1cm, fill=blue!70] (2) at (0,2) {2};
    \node[circle, draw, minimum size=1cm, fill=blue!70] (3) at (2,2) {3};
    \node[circle, draw, minimum size=1cm, fill=blue!70] (4) at (2,0) {4};

    \path (1) edge node[left] {P} (2);
    \path (2) edge node[above] {NP} (3);
    \path (3) edge node[right] {P} (4);
    \path (4) edge node[below] {NP} (1);
\end{tikzpicture}
\caption{A 4-cycle with $2,3,4\in\PP$, $1\in\DD$, and P\'olya edges $1\to2$ and $3\to4$.}
\label{fig:cycle4_case_iv}
\end{figure}
$WQ\widetilde A-I$ is
invertible in this case.
\end{itemize}
\end{example}

In the random regime case, some additional assumptions are required. We list them below. Further, we only discuss the node-based reinforcement case as a generalization of \cite{kaur2025interacting}, since the general edge-based model is already discussed in section~\ref{sec:convergence_random} and the results here can easily be extended. 

Since the reinforcement is node-based and of P\'olya type, the reinforcement
weight depends only on the source vertex. We therefore write
$M=\operatorname{diag}(m_1,m_2,\ldots,m_N)$. We first list some conditions required to prove the results in this section.
\begin{itemize}
  \item[(B1)] {\bf Uniform incoming reinforcement (depending on the vertex's sampling type):} there exist
  $\bar{m}^{in}_{\mathcal{P}}\geq 0$ and $\bar{m}^{in}_{\mathcal{D}}\geq 0$ such that
  \[
  \bone MA=\bar{m}^{in}_{\mathcal{P}}\bone_{\mathcal{P}}
  +\bar{m}^{in}_{\mathcal{D}}\bone_{\mathcal{D}}.
  \]
  That is,
  $\sum_{j\in N(i)}m_j=\bar{m}^{in}_{\mathcal{P}}$ for all $i\in\mathcal{P}$, and
  $\sum_{j\in N(i)}m_j=\bar{m}^{in}_{\mathcal{D}}$ for all $i\in\mathcal{D}$.

  \item[(B2)] {\bf Uniform outgoing reinforcement:} there exist
  $m^{out}_{\mathcal{P}}\geq 0$ and $m^{out}_{\mathcal{D}}\geq 0$ such that
  \[
  MA\bone_{\mathcal{P}}^{\top}=m^{out}_{\mathcal{P}}\bone^{\top}
  \quad \text{and} \quad
  MA\bone_{\mathcal{D}}^{\top}=m^{out}_{\mathcal{D}}\bone^{\top}.
  \]
  Equivalently, for every $i\in[N]$,
  \[
  m^{out}_{\mathcal{P}}=d^{out}_{i,\mathcal{P}}m_i
  \quad \text{and} \quad
  m^{out}_{\mathcal{D}}=d^{out}_{i,\mathcal{D}}m_i.
  \]

  \item[(B3)] {\bf Balance condition:}
  $\bone MA=\bar{m}^{in}\bone$. Note that (B3) implies (B1), with
  $\bar{m}^{in}_{\mathcal{P}}=\bar{m}^{in}_{\mathcal{D}}=\bar{m}^{in}$. With this condition
  and the specific structure of the adjacency matrix, we get
  \begin{eqnarray}\label{1MA}
  \bone_{P_1\cup D_1}MA
  &=&
  \bar{m}^{in}\bone_{P_1\cup D_2},
  \nonumber\\
  \bone_{P_2\cup D_2}MA
  &=&
  \bar{m}^{in}\bone_{P_2\cup D_1}.
  \end{eqnarray}
\end{itemize}
For the rest of this section, we assume that the initial total number of balls
is the same in every urn. Together with (B3), this implies that all urns have
the same total number of balls at each time $t$. If
${\bf T}_t=\operatorname{diag}(T_t(1),\ldots,T_t(N))$, then
${\bf T}_t=T_t I$ for some scalar $T_t$. 

\

For the random-regime results below, take $W=I$ and
$q_i\in\{0,1\}$, and suppose that the strongly connected
reinforcement graph admits the one-step GPA partition
\eqref{adjacency-partition}. Let $T_0>0$ be the common
initial total number of balls. Under (B1), use
\[
\widetilde A_{ji}=\frac{m_jA_{ji}}{\tilde m_i},
\qquad
\tilde m_i=
\begin{cases}
\bar m_{\mathcal P}^{in},&i\in\mathcal P,\\
\bar m_{\mathcal D}^{in},&i\in\mathcal D.
\end{cases}
\]
Condition (B2) then gives condition (B), with
\[
\mu_{\mathcal P}^{out}
=\frac{m_{\mathcal P}^{out}}{\bar m_{\mathcal P}^{in}},
\qquad
\mu_{\mathcal D}^{out}
=\frac{m_{\mathcal D}^{out}}{\bar m_{\mathcal D}^{in}}.
\]
The exact urn step size at vertex $i$ is
\[
a_t(i)=
\frac{\tilde m_i}{T_0+(t+1)\tilde m_i}
=\frac1{t+1}+O(t^{-2}).
\]
Since the reinforcement and state variables are bounded,
replacing $a_t(i)$ by $1/(t+1)$ introduces an
$O(t^{-2})$ error in the update. The stable stochastic
approximation arguments for the deterministic aggregate
limits therefore apply with this summable error. Under (B3), all the step sizes coincide:
\[
a_t=\frac1{t+1+T_0/\bar m^{in}}.
\]
Consequently, under (B2), the exact conditional-drift
cancellation in Lemma~\ref{lem:GPA_structure} remains valid, provided $d_1=d_2$. We obtain the following.
\begin{theorem}\label{thm:AS_average_URNS}
Assume (B1) and (B2) hold and
$\frac{m^{out}_{\mathcal P}}{\bar{m}^{in}_{\mathcal P}}
-\frac{m^{out}_{\mathcal D}}{\bar{m}^{in}_{\mathcal D}}<1$. Then, as
$t \to \infty$,
$$
\begin{pmatrix} Z_t^{\mathcal P} \\  Z_t^{\mathcal D} \end{pmatrix}
\xrightarrow{a.s.}
\frac{d_1+d_2}
{\frac{m^{out}_{\mathcal D}}{\bar{m}^{in}_{\mathcal D}}
-\frac{m^{out}_{\mathcal P}}{\bar{m}^{in}_{\mathcal P}}+1}
\begin{pmatrix}
\frac{m^{out}_{\mathcal P}}{\bar{m}^{in}_{\mathcal P}}\\
\frac{m^{out}_{\mathcal D}}{\bar{m}^{in}_{\mathcal D}}
\end{pmatrix}.
$$
In addition, if
$\frac{m^{out}_{\mathcal P}}{\bar{m}^{in}_{\mathcal P}}
=\frac{m^{out}_{\mathcal D}}{\bar{m}^{in}_{\mathcal D}}=r$ (equal-ratio condition), then
$(Z_t^{P_1}+Z_t^{D_1}, Z_t^{P_1}+Z_t^{P_2}, Z_t^{D_1}+Z_t^{D_2})$
and
$(Z_t^{P_2}+Z_t^{D_2}, Z_t^{P_1}+Z_t^{P_2}, Z_t^{D_1}+Z_t^{D_2})$
converge almost surely to $r(d_1+d_2)(1,1,1)$. If the above equal-ratio
condition holds and (B3) holds, then
$Z_t^{P_1}, Z_t^{P_2}, Z_t^{D_1}$ and $Z_t^{D_2}$ admit almost sure limits. Furthermore, if
$m^{out}_{\mathcal P}=m^{out}_{\mathcal D}=\bar m^{in}/2$, then as
$t\to\infty$, $\bar Z_t\xrightarrow{a.s.}1/2$, provided
$p_1+p_2=d_1+d_2$. Moreover, if $p_1=p_2=d_1=d_2$, then, as $t\to\infty$,
\[
(\bar Z_t^{P_1}+\bar Z_t^{D_1},
\bar Z_t^{P_1}+\bar Z_t^{P_2},
\bar Z_t^{D_1}+\bar Z_t^{D_2},
\bar Z_t^{P_2}+\bar Z_t^{D_2})
\xrightarrow{a.s.}
(1,1,1,1).
\]
\end{theorem}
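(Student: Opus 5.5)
The plan is to reduce Theorem~\ref{thm:AS_average_URNS} to Theorem~\ref{thm:AS_average}, Lemma~\ref{lem:GPA_structure} and Corollary~\ref{cor:Z_limit}. We do this by showing that the normalized urn process fits the framework of Section~\ref{sec:convergence_random} up to a summable perturbation.

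By Proposition~\ref{prop:urn}, with Pólya-type node reinforcement ($\alpha_{ji}=\beta_{ji}=m_j$), we have $c_{ji}=0$ and $\mu_{ji}=m_jA_{ji}/\tilde m_i$. Under (B1), $\tilde m_i$ takes the value $\bar m^{in}_{\mathcal P}$ or $\bar m^{in}_{\mathcal D}$ according to the type of $i$. Hence every vertex is positive-critical and (A2) holds, except that $\epsilon_t$ may be nonzero. Next, (B2) gives
\[
(\widetilde A\bone_{\mathcal P}^\top)_j=\sum_{i\in\mathcal P}m_jA_{ji}/\bar m^{in}_{\mathcal P}=m^{out}_{\mathcal P}/\bar m^{in}_{\mathcal P},
\]
and similarly for $\mathcal D$. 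This is condition (B) with the stated $\mu^{out}$. The exact step size is $a_t(i)=1/(t+1)+O(t^{-2})$, so the remainder satisfies $\|\epsilon_{t+1}\|=O(t^{-1})$. Its contribution to each update is therefore summable.

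First, I would repeat the proof of Theorem~\ref{thm:AS_average}. Summing the node recursions over blocks, and using (B) together with the block form \eqref{adjacency-partition}, gives the same two-dimensional recursion for $(Z_t^{\mathcal P},Z_t^{\mathcal D})$, now with an extra $O(t^{-2})$ term. The drift matrix has eigenvalues $-1$ and $\mu^{out}_{\mathcal P}-\mu^{out}_{\mathcal D}-1<0$. Lemma~\ref{lem:sas}, which allows a vanishing remainder, then gives the first limit. Under the equal-ratio condition $\mu^{out}_{\mathcal P}=\mu^{out}_{\mathcal D}=r$, the three-dimensional recursion for $(Z^{P_1}+Z^{D_1},Z^{P_1}+Z^{P_2},Z^{D_1}+Z^{D_2})$ has a drift matrix with the single eigenvalue $-1$. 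The same argument then yields convergence to $r(d_1+d_2)(1,1,1)$, and the symmetric triple is handled identically. Note that I keep $r$ general here and do not use $\mu_{\mathcal P}^{out}+\mu_{\mathcal D}^{out}=1$. That identity relied on Lemma~\ref{lem:GPA_structure}, whose positive-criticality and column-stochasticity argument must be rechecked when $\tilde m_i$ depends on the type of $i$.

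Second, for convergence of the individual block sums I would add (B3). Then all urns share the same total $T_t$, so the step size $a_t=1/(t+1+T_0/\bar m^{in})$ is exactly common to all vertices. The recursion is therefore \eqref{noderecurnew} with $1/(t+1)$ replaced by $a_t$, with no remainder. The computation in Lemma~\ref{lem:GPA_structure} applies verbatim: $(Q\widetilde A-I)S=0$, and $\bone_{\mathcal D}\widetilde AS$ has sign determined by $d_1-d_2$. Consequently $Z^{P_1}_t+Z^{D_2}_t-Z^{P_2}_t-Z^{D_1}_t$ is a bounded super- or submartingale, as in Remark~\ref{d1d2}, and converges almost surely. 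Combined with the deterministic limits of the three pair sums, the linear algebra at the end of the proof of Theorem~\ref{thm:AS_average} gives almost sure limits for $Z^{P_1}_t$, $Z^{P_2}_t$, $Z^{D_1}_t$ and $Z^{D_2}_t$.

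Third, if $m^{out}_{\mathcal P}=m^{out}_{\mathcal D}=\bar m^{in}/2$, then $r=1/2$. The first part gives $Z_t^{\mathcal P}+Z_t^{\mathcal D}\to d_1+d_2$, so $\bar Z_t\to(d_1+d_2)/N=1/2$ when $p_1+p_2=d_1+d_2$. When all four block sizes are equal, dividing the pair-sum limits $(d_1+d_2)/2$ by the common block size gives $(1,1,1,1)$, exactly as in Corollary~\ref{cor:Z_limit}.

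I expect the main obstacle to be the martingale step under a non-uniform step size. Without (B3), the $O(t^{-2})$ discrepancy between urns destroys the exact drift cancellation, and the process is only a quasi-martingale. That is why I would use (B3) there to equalize the step sizes exactly, rather than trying to control the accumulated discrepancy.
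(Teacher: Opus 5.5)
Your proposal is correct and follows the paper's route. You identify the urn process with the (A2) model via $\widetilde A_{ji}=m_jA_{ji}/\tilde m_i$, check that (B2) yields (B), and absorb the $O(t^{-2})$ step-size discrepancy as a vanishing remainder. Under (B3) you use the common step size to recover the martingale (or sub/supermartingale) argument of Lemma~\ref{lem:GPA_structure} and Remark~\ref{d1d2}, and then invoke Theorem~\ref{thm:AS_average} and Corollary~\ref{cor:Z_limit}.

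One loose end: the identity $\mu^{out}_{\mathcal P}+\mu^{out}_{\mathcal D}=1$ that you hesitate over does hold here. Lemma~\ref{lem:GPA_structure} only uses $\bone\widetilde A=\bone$, which follows directly from (B1). You actually need this identity for $(Q\widetilde A-I)S=0$ in your second step, and it forces $r=1/2$, which is consistent with your general-$r$ eigenvalue computation.
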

The proof follows from Theorem~\ref{thm:AS_average} and Corollary~\ref{cor:Z_limit}. In addition to this, we can show that under suitable version of conditions (C) and (D), the urns synchronize on each partition. In particular, as $t \to \infty$, $Z_t(i) \to Z_U$ almost surely for all $i \in U$ where $U \in \{ P_1, P_2, D_1, D_2 \}$. We have the following.
\begin{theorem}[Almost Sure Convergence of $\Z_t$]\label{thm:QM_URNS} Suppose (B1)-(B3), (C) and (D) hold and $p_1+p_2=d_1+d_2$. Then $\Z_t$ has an almost sure limit and $\Z_t(I-\tilde{J}) \to{\bf 0}$ almost surely as $t\to\infty$. \end{theorem}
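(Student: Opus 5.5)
The plan is to reduce Theorem~\ref{thm:QM_URNS} to Theorems~\ref{thm:QM1} and \ref{thm:randomlimit}, and then to handle the only real difference: the step size. Under (B1)--(B3) with a common initial total $T_0$, every urn contains the same number of balls at each time, namely $T_t=T_0+t\bar m^{in}$. Hence the exact recursion at every vertex uses the same step $a_t=1/(t+1+T_0/\bar m^{in})$. With $\widetilde A_{ji}=m_jA_{ji}/\bar m^{in}$, the urn recursion reads
\[
\Z_{t+1}=\Z_t+a_t\bigl[\Z_t(Q\widetilde A-I)+\bone_{\mathcal D}\widetilde A+\Delta M_{t+1}\bigr].
\]
This is exactly the (A2) dynamics, except that the step is $a_t$ instead of $1/(t+1)$. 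Condition (B2) gives (B) with $\mu^{out}_{\mathcal P}=m^{out}_{\mathcal P}/\bar m^{in}$ and $\mu^{out}_{\mathcal D}=m^{out}_{\mathcal D}/\bar m^{in}$. Together with $p_1+p_2=d_1+d_2$, the argument of Lemma~\ref{lem:limit_Z} forces both of these to equal $1/2$. Conditions (C) and (D) are assumed directly for this $\widetilde A$.

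Rather than perturbing, I would rerun the proofs with the time shift $s=t+T_0/\bar m^{in}$, which is harmless because every step uses only $\sum a_t=\infty$, $\sum a_t^2<\infty$ and Euler products.

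\textbf{Step 1.} In Lemma~\ref{lem:commute} nothing depends on the step, so $\tilde J$ still commutes with $Q\widetilde A$.

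\textbf{Step 2.} Theorem~\ref{thm:AS_average} holds with step $a_t$. Stochastic approximation only needs $\sum a_t=\infty$ and $\sum a_t^2<\infty$. The martingale claim in Lemma~\ref{lem:GPA_structure} and Remark~\ref{d1d2} use only the conditional drift, which is an exact cancellation independent of the step. So all four block sums converge, and Lemma~\ref{lem:limit_Z} gives $\Z_\infty(Q\widetilde A-I)=-\bone_{\mathcal D}\widetilde A$. The divergence argument there uses only $\sum a_t=\infty$.

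\textbf{Step 3.} Proposition~\ref{prop:Var} carries over with $K_t=I-a_t(I-Q\widetilde A)$. The products satisfy
\[
\prod_{k\le t}\bigl|1-a_k\nu\bigr|\asymp t^{-\operatorname{Re}\nu},
\]
since $a_k=1/(k+c)$, and this is the same gamma-function asymptotic with a shifted argument. The variance sum has weights $a_k^2\asymp k^{-2}$. Hence both the mean and variance bounds of Proposition~\ref{prop:Var} survive, with the same exponents, for $U=I-\tilde J$ and for $U=(I-\tilde J)H$.

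\textbf{Step 4.} Repeat the quasimartingale estimate of Theorem~\ref{thm:QM1}. The conditional increment is now bounded by
\[
a_t\Bigl(\|\Z_t(I-\tilde J)H\|+\|(\Z_t\tilde J-\Z_\infty)H\|\Bigr).
\]
The first term has expectation $O(t^{-c_1})$ by Step~3. The second is controlled through $G_t$ exactly as before, with the scalar recursions $U_{t+1}(i)=U_t(i)(1-a_t\nu_i)+\xi_{t+1}(i)$. Summability against $a_t\sim 1/t$ gives a bounded quasimartingale, so $\Z_t$ converges almost surely. The synchronization statement $\Z_t(I-\tilde J)\to\bzero$ then follows verbatim from the proof of Theorem~\ref{thm:randomlimit}: the $\mathcal L^2$ decay from Step~3 and the almost sure limit are combined by bounded convergence.

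I expect the main obstacle to be the consistency of the urn parametrization with (C) and (D), and checking that no step secretly used $a_t=1/(t+1)$ exactly. If the shift argument feels fragile, I would instead keep $1/(t+1)$ and treat the difference as a remainder $\epsilon_{t+1}=O(t^{-1})$ inside the bracket, as in Proposition~\ref{prop:urn}. This contributes $O(t^{-2})$ per step and is therefore absolutely summable, so it can be added to the martingale-plus-drift decompositions in Lemma~\ref{lem:limit_Z} and Theorem~\ref{thm:QM1} without changing any conclusion. In Proposition~\ref{prop:Var} it perturbs the mean by $\sum_k k^{-2}k^{\operatorname{Re}\nu}t^{-\operatorname{Re}\nu}$, which still decays polynomially. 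Under this route, the exact conditional-drift cancellation of Lemma~\ref{lem:GPA_structure} is the one place where (B3) must be used genuinely, and I would verify it first.
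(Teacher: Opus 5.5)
Your proposal is correct and follows the paper's own, largely implicit, argument. Under (B3) with equal initial totals, the urn system is exactly the (A2) dynamics for $\widetilde A = MA/\bar m^{in}$ with the common step $a_t = 1/(t+1+T_0/\bar m^{in}) = 1/(t+1)+O(t^{-2})$, so Lemma~\ref{lem:commute}, Theorem~\ref{thm:AS_average}, Proposition~\ref{prop:Var}, Lemma~\ref{lem:limit_Z}, Theorem~\ref{thm:QM1} and the proof of Theorem~\ref{thm:randomlimit} carry over, and your main route of rerunning them with the exact shifted step (rather than the summable-error comparison the paper also mentions) keeps all the drift cancellations exact.
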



\subsection{Memory-based reinforcement}\label{sec:memory_urns}
An interesting application of this interacting urn process is to study urns with memory. 
    Suppose that each urn in the graph is of non-P\'olya type. Let \(T_t(i)=W_t(i)+B_t(i)\) and \(Z_t(i)=W_t(i)/T_t(i)\). Let \(\{\I_{t+1}(i)\}_{i\in \VV}\) denote the indicators of the events that a white ball is drawn from urn \(i\) at time \(t+1\). Thus, an urn at node $i$ is updated as follows.
\begin{equation} \label{update_eq}
    W_{t+1}(i)=W_t(i) + \sum_{j \in N(i)}\{ \alpha_{j} \I_{t+1}(j) + (m_{j}-\beta_{j})(1-\I_{t+1}(j))\},
\end{equation}
where the conditional distribution (given the history up to time $t$) of $\I_{t+1}(j)$ is given by
$$ \I_{t+1}(j) = \begin{cases} 
1 & \text{with probability } \eta_j Z_t(j) + (1-\eta_j)\mathfrak{M}_t(j) \\
0 & \text{otherwise}.
\end{cases}$$
Here,
$\mathfrak M_t(j)=\frac{1}{t}\sum_{s=1}^t \I_s(j)$ denotes the empirical
fraction of white draws from urn $j$ up to time $t$, and
$(\eta_j)_{j\in \VV }\in[0,1]^\mathcal{V}$ is the family of node-specific memory
parameters. The parameter $\eta_j$ controls the relative weight assigned
to the current composition $Z_t(j)$ and the empirical memory
$\mathfrak M_t(j)$. We refer to $\mathfrak M_t(j)$ as the memory of urn $j$. Only urns with $\eta_j < 1$ tap into their memory. To represent this memory-based model in the notation of the general framework below, we enlarge the vertex set by adding one memory node for each urn that uses its memory. Let $
S=\{j\in\VV:\eta_j<1\}$,
and define
$
\mathfrak V=\VV\cup\{\mathfrak m(j):j\in S\}.$
On this enlarged vertex set, there are two relevant directed graphs: a sampling graph and a reinforcement graph. The sampling graph is
$\mathfrak G_W=(\mathfrak V,\mathfrak E_W)$,
where
$
\mathfrak E_W
=
\{(j,j):j\in\VV\}
\cup
\{(\mathfrak m(j),j):j\in S\}.$

The edge \(\mathfrak m(j)\to j\) in the sampling graph indicates that the composition of the memory urn enters the sampling probability of urn \(j\). The reinforcement graph is $\mathfrak G_R=(\mathfrak V,\mathfrak E_R)$,
where $\mathfrak E_R
=
\mathcal E
\cup
\{(j,\mathfrak m(j)):j\in S\}$. The edge \(j\to\mathfrak m(j)\) in the reinforcement graph records the colour drawn from urn \(j\) in the corresponding memory urn. For instance, consider the graph in Figure~\ref{fig:urns_with_memory}. Suppose that the urns at \(2,3,4\) use their memories. The original graph, the corresponding sampling graph, and the corresponding reinforcement graph are shown below. The red vertices denote the memory urns.

The urn at node \(\mathfrak m(j)\) records the sampling history of urn \(j\) as follows. At time \(0\), the urn at \(\mathfrak m(j)\) is empty. At time \(t>0\), a white or black ball is added to \(\mathfrak m(j)\) according to whether a white or black ball is drawn from urn \(j\). Then \(\mathfrak M_t(j)\) is exactly the fraction of white balls in the urn \(\mathfrak m(j)\) at time \(t\). Thus, the edge \(j\to\mathfrak m(j)\) belongs to the reinforcement graph and carries a P\'olya-type reinforcement matrix, while the edge \(\mathfrak m(j)\to j\) belongs to the sampling graph and indicates that the composition of the memory urn enters the sampling probability of urn \(j\). Since the original urn \(j\) reinforces its ordinary out-neighbours according to the non-P\'olya matrix \(R_j\), while it reinforces its memory node \(\mathfrak m(j)\) according to a P\'olya-type matrix, the reinforcement rule cannot be attached only to the source node. This motivates edge-based reinforcement.

Let $S=\{j\in\VV:\eta_j<1\}$
be the set of urns that tap into their memory, and let
$\mathfrak V=\VV\cup\{\mathfrak m(j):j\in S\}$
be the enlarged vertex set. We index the vertices so that the original urns are indexed first, followed by the auxiliary memory urns. Thus all matrices
in the enlarged system have dimension \((|\VV|+|S|)\times(|\VV|+|S|)\).

In this example, \(Q=I\), since the sampling is preferential with respect to
the enlarged weighted sampling neighbourhood. The sampling matrix
\(W=(w_{ij})_{i,j\in\mathfrak V}\) is defined as follows:
\[
w_{ij}=
\begin{cases}
\eta_j, & \text{if } i=j,\ j\in\VV,\\
1-\eta_j, & \text{if } i=\mathfrak m(j),\ j\in S,\\
1, & \text{if } i=j,\ j\in\mathfrak V\setminus\VV,\\
0, & \text{otherwise.}
\end{cases}
\]

\begin{figure}[H]
\centering
\tikzset{
main/.style={draw, circle, minimum size=0.7cm, inner sep=1pt},
mem/.style={draw, circle, minimum size=0.8cm, inner sep=1pt, fill=red!75, text=white},
edge/.style={->, >=latex', thick},
sedge/.style={->, >=latex', thick, dashed},
redge/.style={->, >=latex', thick}
}

\begin{subfigure}[t]{0.36\textwidth}
\centering
\begin{tikzpicture}[scale=0.9, transform shape]

  \node[main] (a1) at (1,3) {1};  
  \node[main] (a2) at (2,5) {2}; 
  \node[main] (a5) at (3,7) {5};  
  \node[main] (a4) at (3,3.5) {4};  
  \node[main] (a3) at (5,6) {3};  

  \draw[edge] (a1) to (a2);
  \draw[edge] (a2) to (a3);
  \draw[edge] (a3) to (a4);
  \draw[edge] (a4) to (a1);
  \draw[edge] (a5) to (a2);
  \draw[edge] (a3) to (a5);

\end{tikzpicture}
\caption{Original graph}
\end{subfigure}
\hfill
\begin{subfigure}[t]{0.54\textwidth}
\centering
\begin{tikzpicture}[scale=0.9, transform shape]

  \node[main] (a1) at (1,3) {1};  
  \node[main] (a2) at (2,5) {2}; 
  \node[main] (a5) at (3,7) {5};  
  \node[main] (a4) at (3,3.5) {4};  
  \node[main] (a3) at (5,6) {3};  

  \node[mem] (m2) at (0.81,6.10) {2};  
  \node[mem] (m3) at (6.19,4.90) {3};  
  \node[mem] (m4) at (4.19,2.40) {4};  

  \draw[redge] (a1) to (a2);
  \draw[redge] (a2) to (a3);
  \draw[redge] (a3) to (a4);
  \draw[redge] (a4) to (a1);
  \draw[redge] (a5) to (a2);
  \draw[redge] (a3) to (a5);

  \draw[redge] (a2) to (m2);
  \draw[redge] (a3) to (m3);
  \draw[redge] (a4) to (m4);

\end{tikzpicture}
\caption{Reinforcement graph}
\end{subfigure}

\vspace{0.7cm}

\begin{subfigure}[t]{0.58\textwidth}
\centering
\begin{tikzpicture}[scale=0.92, transform shape]

  \node[main] (a1) at (1,2.2) {1};  
  \node[main] (a2) at (3,4.6) {2}; 
  \node[main] (a5) at (5,6.8) {5};  
  \node[main] (a4) at (5,2.2) {4};  
  \node[main] (a3) at (8,4.8) {3};  

  \node[mem] (m2) at (1.2,5.8) {2};  
  \node[mem] (m3) at (8.4,3.2) {3};  
  \node[mem] (m4) at (5.4,0.8) {4};  

  \draw[sedge] (a1) edge[loop left, looseness=8] node[left] {$1$} (a1);
  \draw[sedge] (a5) edge[loop above, looseness=8] node[above] {$1$} (a5);

  \draw[sedge] (a2) edge[loop right, looseness=8] node[right] {$\eta_2$} (a2);
  \draw[sedge] (a3) edge[loop right, looseness=8] node[right] {$\eta_3$} (a3);
  \draw[sedge] (a4) edge[loop right, looseness=8] node[right] {$\eta_4$} (a4);

  \draw[sedge] (m2) to[bend right=18] node[left] {$1-\eta_2$} (a2);
  \draw[sedge] (m3) to[bend right=18] node[right] {$1-\eta_3$} (a3);
  \draw[sedge] (m4) to[bend left=18] node[right] {$1-\eta_4$} (a4);

\end{tikzpicture}
\caption{Sampling graph}
\end{subfigure}

\caption[Sampling and reinforcement graphs for the memory example.]{The original graph and the two graphs associated with the memory example. Red vertices denote the memory urns corresponding to urns \(2,3,4\). Dashed arrows denote sampling interactions, and the labels on these dashed arrows are the corresponding edge weights. Solid arrows denote reinforcement interactions. In the sampling graph, a dashed arrow from a memory urn to the corresponding original urn indicates that the original urn uses its memory while sampling; the self-loop at an original urn indicates the weight assigned to its current composition. In the reinforcement graph, a solid arrow from an original urn to its memory urn indicates that the draw from the original urn is recorded in its memory urn. These memory-recording reinforcement edges carry P\'olya-type reinforcement.}
\label{fig:urns_with_memory}

\end{figure}

Thus, for each \(j\in S\), the weighted sampling average used by urn \(j\) is
\(\eta_j Z_t(j)+(1-\eta_j)Z_t(\mathfrak m(j))\). This agrees with the
memory-based sampling rule, since
\(Z_t(\mathfrak m(j))=\mathfrak M_t(j)\). The sampling rule assigned to the auxiliary memory urns themselves is immaterial for the evolution of the original urns, because memory urns do not reinforce
the original urns; they only record past draws and enter the sampling rules of
the corresponding original urns. The self-weight above is therefore only a convenient way to define the sampling matrix on the full enlarged vertex set. The reinforcement graph on the enlarged vertex set contains the original reinforcement edges, together with the memory-recording edges \(j\to\mathfrak m(j)\) for \(j\in S\). Along the original reinforcement edges, we use the non-P\'olya matrices from the original model, while along \(j\to\mathfrak m(j)\) we use the P\'olya-type matrix \(I_2\). There is no reinforcement edge from \(\mathfrak m(j)\) to \(j\); the influence of \(\mathfrak m(j)\) on \(j\) is through the sampling matrix \(W\).

\subsection{Random Walks with Memory}
Inspired by the \lq\lq Urns with Memory" example in section~\ref{sec:memory_urns}, we describe an asymmetric elephant random walk and its urn coupling that naturally fits into the process described in \eqref{eq:main_recur_vec}. Note that random walks with memory are not directly a special case of the model because their natural state variables are signed, and their memory mechanism acts on past increments. However, as we shall see, through an appropriate urn coupling, the random-walk dynamics can be represented by an interacting urn process, which is a special case of our framework. The urn coupling for a classical elephant random walk was introduced in \cite{baur2016elephant}. We illustrate a generalization of that in the example below.

\

\noindent {\bf Asymmetric Elephant Random Walk:}
    Consider a walk $S_n^{\alpha,\beta} = \sum_{i=1}^n X_i$, where $S_0=X_0=0$ and
    $$X_1=\begin{cases} \alpha \ & \text{w.p.}\quad  q \\
    -\beta \ & \text{w.p.} \quad 1-q
    \end{cases}$$
    for some fixed $\alpha, \beta > 0$. For $n > 1$, choose $n^\prime \sim$ Unif$(1, \dots, n-1)$ and define
    $$X_n=\begin{cases} \alpha \frac{X_{n^\prime}}{\vert X_{n^\prime} \vert } \ & \text{w.p.} \quad  p \\[2ex]
    -\beta \frac{X_{n^\prime}}{\vert X_{n^\prime} \vert } \ & \text{w.p.} \quad 1-p
    \end{cases}$$
    The parameter $p$ controls the tendency of the walk to follow its memory.
After a previous step is selected uniformly at random, the new step has the
same sign as the selected step with probability $p$, and the opposite sign
with probability $1-p$. The magnitudes of these two possibilities are allowed
to be different: a memory-following step has size $\alpha$, while a
memory-reversing step has size $\beta$. Note that for $\alpha=\beta=1$ this is the classical ERW and can be coupled to a two-colour urn with balls of two colours (say red and blue) with mean replacement matrix $\begin{pmatrix}
        p & 1-p \\ 1-p & p
    \end{pmatrix}$, and $S_n^{1,1} \sim R_n - B_n$ (the difference between the balls of colour red and colour blue at time $n$). 

    In the above walk, we need to track the step sizes as well as the number of times a positive or a negative step is sampled. Therefore, the coupling consists of two urns $(U_1, U_2)$, each with balls of both red and blue colours. $U_1$ has a self-loop and a directed edge from $U_1$ to $U_2$, whereas $U_2$ has zero out-degree. $U_1$ reinforces itself with the mean replacement matrix $R_1$ and reinforces $U_2$ with the mean replacement matrix $R_2$, where
    $$
R_1 = \begin{pmatrix}
        p & 1-p \\
        1-p & p
    \end{pmatrix}
\quad \text{and} \quad
R_2 = \begin{pmatrix}
        p\alpha & (1-p)\beta \\
        (1-p)\beta & p\alpha
    \end{pmatrix}.
$$

More precisely, the walk and the two-urn process can be constructed on the
same probability space so that, for every $n\geq 1$,
$S_n^{\alpha,\beta}=R_n^{(2)}-B_n^{(2)}$,
where $R_n^{(2)}$ and $B_n^{(2)}$ denote the total red and blue masses in
$U_2$ after $n$ steps. Equivalently, $S_n^{\alpha,\beta}$ has the same
distribution as $R_n^{(2)}-B_n^{(2)}$.

    \begin{figure}[h]
    \centering
\begin{tikzpicture}  [scale=.6,auto=center,every node/.style={circle,fill=blue!20}]
\tikzset{edge/.style = {->,> = latex'}}

  \node (a1) at (1,1) {$U_1$};  
  \node (a2) at (5,1)  {$U_2$}; 
 \draw[edge] (a1) to (a2) ;
\draw[edge] [loop above] (a1) to (a1);  
\end{tikzpicture}  
    \caption{An interacting urn representation of the elephant random walk.}
        \label{fig:urn-coupling}
\end{figure}

For this urn representation, using the terminology defined above for the edge-based reinforcement model, we have $N(1)=N(2)=\{1\}$, $q_1=q_2=1$, $W=I$, $R_{11}=R_1$ and $R_{12}=R_2$. Hence, 
 $\alpha_{11}=\beta_{11}=p$, $\tilde{m}_1=m_{11}=1$, $\alpha_{12}=\beta_{12}=p\alpha$ and $\tilde{m}_2=m_{12}=p\alpha +(1-p)\beta$. We further compute $\tilde{\mu}_{11}=\frac{\alpha_{11} + \beta_{11}-m_{11}}{\tilde{m}_1}=2p-1$ and $\tilde{\mu}_{12}=\frac{2p\alpha}{p\alpha +(1-p)\beta}-1$. Also, $C=\left(1-p, \frac{(1-p)\beta}{p\alpha +(1-p)\beta}\right)$. Hence, we get that as $t \to \infty$,  \[
\Z_t \;\xrightarrow{a.s.} \;
- \left( 1-p,\; \frac{(1-p)\beta}{p\alpha + (1-p)\beta} \right)
\begin{pmatrix}
2p - 2 
&
\displaystyle \frac{2p\alpha}{p\alpha + (1-p)\beta} - 1
\\[6pt]
0 & -1
\end{pmatrix}^{-1}
=
\left( \frac12,\; \frac12 \right).
\]

Hence,  $S_n^{\alpha,\beta}/n \xrightarrow{} 0$ almost surely as  $n \to \infty$. This can be seen directly by writing a two-dimensional stochastic approximation scheme for $(S_n^{\alpha,\beta}/n, S_n^{1, 1}/n)$.

\section{Discussion and Future Work} \label{sec:conc}
We introduced a framework for interacting reinforced stochastic systems with node-dependent sampling and edge-dependent reinforcement, extending earlier models of interacting urns. Sampling and reinforcement may be governed by distinct directed graphs, and their interaction determines the drift matrix of the associated stochastic approximation scheme. This formulation accommodates heterogeneous reinforcement effects and provides a common setting for studying deterministic limits, random limits, partial and full synchronization, and fluctuations. The applications to interacting urns (extending the framework developed in \cite{kaur2023interacting} and \cite{kaur2025interacting}), urns with memory, and elephant random walks illustrate the range of models captured by the framework.

The model studied in this paper considers interactions determined by the nearest-neighbour structure of the underlying graph. Extending the framework to allow sampling
and reinforcement mechanisms that depend on longer-range neighbourhoods would provide a natural direction for further investigation. In particular, we have the following conjecture.
\begin{conj}
Suppose the sampling and reinforcement graphs coincide, with common underlying graph $G$ of period $d$. If the drift matrix is of the form $W^{m_1}Q\widetilde A^{m_2}-I$, the process converges to a deterministic limit whenever
$\gcd(m_1+m_2,d)=1$.
\end{conj}

The general convergence and fluctuation results allow distinct
sampling and reinforcement graphs. Our graph-theoretic
invertibility criteria are developed under (A1), where the
graphs have the same strongly connected directed edge set.
Extending these structural criteria to arbitrary pairs of
graphs remains an interesting direction for future work. Furthermore, our characterization of the system's behaviour in the absence of odd cycles remains incomplete. In the random-limit regime, the distributional properties of the limiting random vector are largely unknown; establishing whether the limit is non-degenerate is an important open problem. 

The scalar-valued state space considered in this paper can naturally be extended to vector-valued states. In particular, one may consider $\mathbf{Z}_t(i)\in\mathbb{R}^d$, with reinforcement mechanisms that allow different coordinates of an observation to influence one another. The resulting mean-field dynamics are governed by a system of coupled stochastic approximation recursions, with the interaction between network and coordinate structure represented by a block-valued reinforcement operator. The present analysis provides a starting point for such an extension. In the context of interacting urns, this corresponds to interacting multi-colour urn models while in the context of random walks with memory, it leads naturally to interacting walks on $\mathbb{Z}^d$.

\



\appendix

\section{Supplementary Results}  \label{sec:extra}   
We first present the spectral decomposition of the limiting expectation
of $\Z_t\tilde J$.

\begin{proposition}\label{prop:spectral_centering_ZJ}
Work with the random-limit model and the fixed GPA partition
of Section~\ref{sec:convergence_random}.
Assume {\bf (B)}, {\bf (C)}, and {\bf (D)} hold. Write\[
I-Q\widetilde A=P\Lambda P^{-1},
\qquad
\Lambda=\operatorname{diag}(\nu_1,\ldots,\nu_N),
\]
and let $s$ be such that $\nu_s=0$. If $q_k$ denotes
the $k$-th row of $P^{-1}$, then
\[
\lim_{t\to\infty}\IE[\Z_t\tilde J]
=
(\IE[\Z_0\tilde J]P)_s q_s
+
\sum_{k\neq s}
\frac{(\bone_{\mathcal D}\widetilde A P)_k}{\nu_k}\,q_k.
\]
\end{proposition}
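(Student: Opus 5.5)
The plan is to reduce the statement to a deterministic linear recursion for $m_t:=\IE[\Z_t\tilde J]$ and then diagonalize it using {\bf (D)}. Taking expectations in \eqref{eq:EVar} gives
\[
\IE[\Z_{t+1}]=\IE[\Z_t]K_t+\frac{b}{t+1},\qquad K_t=I-\frac{1}{t+1}(I-Q\widetilde A),\qquad b:=\bone_{\mathcal D}\widetilde A .
\]
I will multiply this on the right by $\tilde J$ and use two facts. First, by Lemma~\ref{lem:commute}, $\tilde J$ commutes with $Q\widetilde A$, hence with $K_t$. Second, by {\bf (C)}, $b$ is constant on each block, so $b\tilde J=b$. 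Together these yield the closed recursion
\[
m_{t+1}=m_tK_t+\frac{b}{t+1}.
\]

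Next I pass to spectral coordinates. Set $x_t:=m_tP$. Since $K_tP=P\bigl(I-\tfrac{1}{t+1}\Lambda\bigr)$, the recursion decouples into the scalar recursions
\[
x_{t+1}(k)=x_t(k)\Bigl(1-\frac{\nu_k}{t+1}\Bigr)+\frac{(bP)_k}{t+1},\qquad k\in[N].
\]
By Lemma~\ref{lem:eigenvector} together with {\bf (D)}, the eigenvalue $0$ of $I-Q\widetilde A$ is simple. Moreover, as in the proof of Proposition~\ref{prop:Var}, every other eigenvalue satisfies $\operatorname{Re}(\nu_k)>0$. Once every coordinate of $x_t$ is shown to converge, the identity $m_t=x_tP^{-1}=\sum_k x_t(k)\,q_k$ yields the claimed formula.

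For $k\neq s$, I set $y_t:=x_t(k)-(bP)_k/\nu_k$. This satisfies $y_{t+1}=y_t\bigl(1-\nu_k/(t+1)\bigr)$, and the Euler-product estimate gives $\bigl|\prod_{j\le t}(1-\nu_k/(j+1))\bigr|=\mathcal O(t^{-\operatorname{Re}\nu_k})\to0$. Hence $x_t(k)\to(bP)_k/\nu_k$. A possible zero factor at small $j$ is harmless, since it only makes $y_t$ vanish from then on.

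For $k=s$, the recursion reads $x_{t+1}(s)=x_t(s)+(bP)_s/(t+1)$, so $x_t(s)=x_0(s)+(bP)_s\sum_{j\le t}(j+1)^{-1}$. This coordinate is where the main obstacle lies: one must show $(bP)_s=0$. I intend to argue by boundedness rather than by an explicit computation of the right null vector. Because $\Z_t\in[0,1]^N$, the sequence $m_t$ is bounded, and therefore so is $x_t=m_tP$ with $P$ fixed. If $(bP)_s\neq0$, then $x_t(s)$ would diverge logarithmically, which is a contradiction. Hence $(bP)_s=0$ and $x_t(s)\equiv x_0(s)=(\IE[\Z_0\tilde J]P)_s$. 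Combining the two cases gives
\[
\lim_{t\to\infty}m_t=(\IE[\Z_0\tilde J]P)_s\,q_s+\sum_{k\ne s}\frac{(bP)_k}{\nu_k}\,q_k,
\]
which is the statement.
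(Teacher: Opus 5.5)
Your proposal is correct and follows the paper's proof essentially step for step. You derive the same closed recursion for $\IE[\Z_t\tilde J]$ via Lemma~\ref{lem:commute} and $\bone_{\mathcal D}\widetilde A\,\tilde J=\bone_{\mathcal D}\widetilde A$, diagonalize under (D), and use the same boundedness argument to force $(\bone_{\mathcal D}\widetilde A P)_s=0$. The only cosmetic difference is for $k\neq s$: you subtract the fixed point $(\bone_{\mathcal D}\widetilde A P)_k/\nu_k$, whereas the paper telescopes the sum $\sum_{i}\frac{1}{i+1}\prod_{j>i}\bigl(1-\frac{\nu_k}{j+1}\bigr)$ into $\nu_k^{-1}\bigl[1-\prod_{j}\bigl(1-\frac{\nu_k}{j+1}\bigr)\bigr]$, and the two computations are equivalent.
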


\begin{proof}
From~\eqref{eq:EVar}, Lemma~\ref{lem:commute}, and {\rm \bf(C)}, we get
\[
\IE[\Z_{t+1}\tilde J]
=
\IE[\Z_t\tilde J]K_t+\frac1{t+1}\bone_{\mathcal D}\widetilde A,\]
where $K_t=I-\frac{1}{t+1}\left(I-Q\tilde{A} \right)$ is as before. For $i\leq j$, let $\Phi_{i,j}=K_iK_{i+1}\cdots K_j$, and let
$\Phi_{i,j}=I$ if $i>j$. Iterating the preceding recursion gives
\[
\IE[\Z_t\tilde J]
=
\IE[\Z_0\tilde J]\Phi_{0,t-1}
+
\sum_{i=0}^{t-1}\frac{1}{i+1}\bone_{\mathcal D} \tilde{A} \Phi_{i+1,t-1}.
\]
By {\rm \bf (D)}, $\Phi_{i,j}
=
P\prod_{\ell=i}^{j}
\left(I-\frac{1}{\ell+1}\Lambda\right)P^{-1}$. Multiplying on the right by $P$, we obtain, for every $k$,
\begin{eqnarray}\label{eq:centering-coordinate}
(\IE[\Z_t\tilde J]P)_k
=
(\IE[\Z_0\tilde J]P)_k
\prod_{j=0}^{t-1}
\left(1-\frac{1}{j+1}\nu_k\right)
+(\bone_{\mathcal D}\widetilde A P)_k
\sum_{i=0}^{t-1}
\frac{1}{i+1}
\prod_{j=i+1}^{t-1}
\left(1-\frac{1}{j+1}\nu_k\right).
\end{eqnarray}

\noindent For $k=s$, since $\nu_s=0$, \eqref{eq:centering-coordinate} gives
\[
(\IE[\Z_t\tilde J]P)_s
=
(\IE[\Z_0\tilde J]P)_s
+
(\bone_{\mathcal D}\tilde{A}P)_s
\sum_{i=0}^{t-1}\frac{1}{i+1}.
\]
Since $\IE[\Z_t\tilde J]P$ is bounded, we must
have $(\bone_{\mathcal D}\tilde{A}P)_s=0$. Hence $\lim_{t\to\infty}(\IE[\Z_t\tilde J]P)_s
= (\IE[\Z_0\tilde J]P)_s$. For $k\neq s$, the first term in \eqref{eq:centering-coordinate} converges to
$0$. Moreover,
\[
\frac{1}{i+1}
\prod_{j=i+1}^{t-1}
\left(1-\frac{1}{j+1}\nu_k\right)
=
\frac1{\nu_k}
\left[
\prod_{j=i+1}^{t-1}
\left(1-\frac{1}{j+1}\nu_k\right)
-
\prod_{j=i}^{t-1}
\left(1-\frac{1}{j+1}\nu_k\right)
\right].
\]
Therefore,
\[
\sum_{i=0}^{t-1}
\frac1{i+1}
\prod_{j=i+1}^{t-1}
\left(1-\frac{1}{j+1}\nu_k\right)
= \frac{1}{\nu_k}\left[
1- \prod_{j=0}^{t-1}
\left(1-\frac{1}{j+1}\nu_k\right)
\right].
\]
Since $\operatorname{Re}(\nu_k)>0$, the product on the right converges to
$0$. Hence, for $k\neq s$, $\lim_{t\to\infty}(\IE[\Z_t\tilde J]P)_k
=
\frac{(\bone_{\mathcal D}\tilde{A}P)_k}{\nu_k}$. Writing the limiting row vector in the basis given by the rows
$q_k$ of $P^{-1}$ gives
\[
\lim_{t\to\infty}\IE[\Z_t\tilde J]
=
(\IE[\Z_0\tilde J]P)_s q_s
+
\sum_{k\neq s}
\frac{(\bone_{\mathcal D}\tilde{A}P)_k}{\nu_k}\,q_k.
\]
\end{proof}
Observe that for the interacting urn case discussed in section~\ref{sec:urns}, we get
\[
\lim_{t\to\infty}\IE[\Z_t\tilde J]
=
(\IE[\Z_0\tilde J]P)_s q_s
+
\sum_{k\neq s}
\frac{(\bone_{\mathcal D}MAP)_k}{\bar m^{in}\nu_k}\,q_k.
\]
In the next section, we present a special case of the random regime and give an alternative proof of the partial synchronization result. 

\subsection{Alternating Even Cycles}

We now turn to a special class of random-limit examples. Consider directed cycles whose length is divisible by four
and in which preferential and de-preferential vertices alternate. Such graphs admit a GPA partition, and hence belong to the random-limit regime. Unlike the general theorem, however, the cycle structure allows us to identify the limiting block averages and prove synchronization without imposing
the additional assumptions {\rm \bf(C)}, and {\rm \bf (D)}. While the proofs work for the general interacting stochastic processes introduced in this paper, we limit our discussion to interacting urns because we only consider node-based reinforcement here.

Throughout this subsection, we consider the node-based
P\'olya urn model of Section~\ref{sec:urns}, with $W=I$,
on a directed cycle with $N=4n$ vertices.
Preferential and de-preferential vertices alternate.
The initial urn compositions are deterministic, with the
same positive total number of balls in each urn.
The balance condition (B3) is imposed in the theorems below.

\begin{figure}[H]
\centering
\begin{tikzpicture}[->, thick, node distance=2cm]

    \node[circle, draw, minimum size=1cm, fill=red!70] (3) at (0,0) {3};
    \node[circle, draw, minimum size=1cm, fill=blue!70] (1) at (0,2) {1};
    \node[circle, draw, minimum size=1cm, fill=red!70] (4) at (2,2) {4};
    \node[circle, draw, minimum size=1cm, fill=blue!70] (2) at (2,0) {2};

    \path (1) edge (4);
    \path (2) edge (3);
    \path (3) edge (1);
    \path (4) edge (2);

\end{tikzpicture}
\caption{A cycle graph on four vertices with $1,2\in\mathcal P$ and
$3,4\in\mathcal D$.}
\label{fig:alt-cycle-4}
\end{figure}

More generally, we consider directed cycles with \(N=4n\) vertices, labelled so that the four parts of the GPA partition satisfy \(p_1=p_2=d_1=d_2=N/4=n\). Since each vertex in a directed cycle has exactly one reinforcement in-neighbour, (B3) forces all reinforcement weights to be equal, and hence \(M=\bar m^{in}I\). According to the labelling of the vertices fixed in the previous section, we
have


\[
\begin{aligned}
P_1&=\{1,2,\ldots,n\},
&\qquad
P_2&=\{n+1,\ldots,2n\},\\
D_1&=\{2n+1,\ldots,3n\},
&\qquad
D_2&=\{3n+1,\ldots,4n\}.
\end{aligned}
\]
For $i\in[N]$, define $\hat i$ to be the parent of $i$, that is, the unique
vertex satisfying $\hat i\to i$. Then
\[
\hat i=
\begin{cases}
3n, & \text{if } i=1,\\
2n+i-1, & \text{if } i>1 \text{ and } i\in P_1,\\
i-n, & \text{if } i\in D_1,\\
2n+i, & \text{if } i\in P_2,\\
i-3n, & \text{if } i\in D_2.
\end{cases}
\]
The following result gives the limiting structure of the block averages.

\begin{theorem}\label{thm:cycle1}
Assume that {\rm \bf (B3)} holds. Then there exists a random variable $\xi_\infty$
taking values in $[0,1]$ such that, as $t\to\infty$,
\[
\bar Z_t^{P_1},\bar Z_t^{D_2}\longrightarrow \xi_\infty
\quad\text{and}\quad
\bar Z_t^{P_2},\bar Z_t^{D_1}\longrightarrow 1-\xi_\infty
\]
almost surely.
\end{theorem}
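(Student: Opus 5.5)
We plan to reduce Theorem~\ref{thm:cycle1} to the block-level results of Section~\ref{sec:mean_fractions}, specialised to the urn setting through Theorem~\ref{thm:AS_average_URNS}. On the alternating directed $4n$-cycle with (B3), every vertex has exactly one reinforcement in-neighbour, so $M=\bar m^{in}I$ and $\widetilde A=A$, a permutation matrix. All urns share the common step size $a_t=1/(t+1+T_0/\bar m^{in})$.

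First we check conditions (B1)--(B3). Each vertex has out-degree one. By the parent map $\hat i$, preferential vertices send to de-preferential ones and vice versa. With the given labelling, the out-neighbour of a $P$-vertex lies in $\mathcal D$ and that of a $D$-vertex lies in $\mathcal P$. This contradicts the uniform outgoing condition (B2), since $d^{out}_{i,\mathcal P}$ is $0$ or $1$ depending on $i$.

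So we do not invoke Theorem~\ref{thm:AS_average_URNS} verbatim. Instead we write the four block-sum recursions directly, as in the proof of Theorem~\ref{thm:AS_average}. Because $\widetilde A$ is a permutation with the block structure \eqref{adjacency-partition}, we have the following sources:
\begin{itemize}
\item $P_1$ receives from $D_1$ together with one $P$-type source;
\item $D_1$ receives from $P_2$;
\item $P_2$ receives from $D_2$;
\item $D_2$ receives from $P_1$.
\end{itemize}
Concretely, from the formula for $\hat i$, every $P$-vertex's parent is a $D$-vertex except vertex $1$. We will read off exact sums, for example $\IE[Z^{D_1}_{t+1}-Z^{D_1}_t\mid\FF_t]=a_t(Z^{P_2}_t-Z^{D_1}_t)$, with a bounded correction coming from the single exceptional edge $3n\to 1$.

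Next we consider the signed combination $S_t=\bar Z^{P_1}_t+\bar Z^{D_2}_t-\bar Z^{P_2}_t-\bar Z^{D_1}_t$. As in Lemma~\ref{lem:GPA_structure}, using $d_1=d_2=n$, we expect $S_t$ to be a bounded martingale, hence almost surely convergent; denote its limit $S_\infty$.

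We then consider the combinations $\bar Z^{P_1}+\bar Z^{D_1}$, $\bar Z^{P_2}+\bar Z^{D_2}$, $\bar Z^{P_1}+\bar Z^{P_2}$ and $\bar Z^{D_1}+\bar Z^{D_2}$. These satisfy a stochastic approximation whose linear part has all eigenvalues with negative real part, as in the $3\times3$ matrix $\Gamma$ with eigenvalue $-1$ in the proof of Theorem~\ref{thm:AS_average}. The martingale noise has increments of order $a_t$ and is square summable. We therefore obtain almost sure convergence of each pair-sum to $1$.

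Setting $\xi_\infty=(1+S_\infty)/2$ and solving the resulting linear relations gives the following limits:
\begin{itemize}
\item $\bar Z^{P_1}\to\xi_\infty$;
\item $\bar Z^{D_2}\to\xi_\infty$;
\item $\bar Z^{P_2}\to 1-\xi_\infty$;
\item $\bar Z^{D_1}\to 1-\xi_\infty$.
\end{itemize}
Boundedness of the block averages gives $\xi_\infty\in[0,1]$.

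The main obstacle is the bookkeeping of the cycle's block transitions. The naive partition must actually be a valid GPA partition, that is, edges may only run along the allowed pattern $P_1\to P_1,D_1$, $D_1\to P_2,D_2$, $P_2\to P_2,D_2$, and $D_2\to P_1,D_1$. We must verify this from $\hat i$ so that the drift of $S_t$ cancels exactly. If the labelling does not produce an exact cancellation, we will fall back on a supermartingale/submartingale argument as in Remark~\ref{d1d2}. The boundary edge into vertex $1$ is where the cancellation must be checked most carefully.
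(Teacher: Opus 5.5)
Your route is the paper's. You correctly note that (B2), hence (B), fails on the alternating cycle, so you write the four block-sum recursions directly. You then show that $Z^{P_1}_t+Z^{D_2}_t-Z^{P_2}_t-Z^{D_1}_t$ is a bounded martingale, run a stable linear stochastic approximation for pair sums, and solve the linear system. The plan needs three corrections.

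\textbf{No exceptional edge.} The ``main obstacle'' you anticipate does not exist. Since $\hat 1=3n\in D_1=\{2n+1,\dots,3n\}$, the parent map restricts to bijections $P_1\to D_1$, $D_1\to P_2$, $P_2\to D_2$, $D_2\to P_1$, and no $P$-vertex has a $P$-parent. With $a_t=\bar m^{in}/T_{t+1}$, the conditional drifts of $Z^{P_1},Z^{P_2},Z^{D_1},Z^{D_2}$ are exactly
\[
a_t(p_1-Z^{D_1}_t-Z^{P_1}_t),\quad a_t(p_2-Z^{D_2}_t-Z^{P_2}_t),\quad a_t(Z^{P_2}_t-Z^{D_1}_t),\quad a_t(Z^{P_1}_t-Z^{D_2}_t).
\]
So the drift of the signed sum is $a_t(p_1-p_2)=0$; the equality that matters is $p_1=p_2$, not $d_1=d_2$. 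This exactness is essential, and your fallback would not have rescued a genuine exception. A bounded correction of order $a_t$ with no fixed sign, such as $a_t(2Z_t(\hat 1)-1)$, is not summable against $a_t\asymp 1/t$. Also, Remark~\ref{d1d2} is proved under (B), which fails here.

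\textbf{Stability must be computed.} You cannot borrow it from the matrix $\Gamma$ in Theorem~\ref{thm:AS_average}, which was derived under (B). Use three independent pair sums: your four satisfy $(P_1+D_1)+(P_2+D_2)=(P_1+P_2)+(D_1+D_2)$, and a redundant four-dimensional system need not have isolated equilibria, as the convergence lemma requires. For $V_t=(Z^{P_1}_t+Z^{D_1}_t,\,Z^{P_1}_t+Z^{P_2}_t,\,Z^{D_1}_t+Z^{D_2}_t)$ the mean field is $(n,2n,0)+V_tM$ with
\[
M=\begin{pmatrix}-2&0&0\\1&-1&1\\0&-1&-1\end{pmatrix}.
\]
Its eigenvalues are $-2$ and $-1\pm i$, and the unique equilibrium is $(n,n,n)$.

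\textbf{Formula for $\xi_\infty$.} With your averaged $S_t$, the limits $\bar Z^{P_1},\bar Z^{D_2}\to\xi_\infty$ and $\bar Z^{P_2},\bar Z^{D_1}\to 1-\xi_\infty$ force $S_\infty=4\xi_\infty-2$. Hence $\xi_\infty=(2+S_\infty)/4$, not $(1+S_\infty)/2$.
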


\begin{proof}
For $i\in[N]$,
\begin{equation}\label{recur_cycle}
Z_{t+1}(i)
=
Z_t(i)
+
\frac{\bar m^{in}}{T_{t+1}}
\begin{cases}
1-Z_t(\hat i)-Z_t(i)+\Delta M_{t+1}(i),
& \text{if } i\in\mathcal P,\\
Z_t(\hat i)-Z_t(i)+\Delta M_{t+1}(i),
& \text{if } i\in\mathcal D,
\end{cases}
\end{equation}
where $\Delta M_{t+1}(i)$ is a bounded martingale difference. Summing over the
four parts of the partition gives
\begin{equation}\label{cycle:P1P2}
Z_{t+1}^{P_1} = Z_t^{P_1} + \frac{\bar m^{in}}{T_{t+1}}
\left(p_1-Z_t^{D_1}-Z_t^{P_1}+\Delta M_{t+1}^{P_1}\right)
\text{ and }
Z_{t+1}^{P_2} = Z_t^{P_2} + \frac{\bar m^{in}}{T_{t+1}}
\left(p_2-Z_t^{D_2}-Z_t^{P_2}+\Delta M_{t+1}^{P_2}\right),
\end{equation}
and,
\begin{equation}\label{cycle:D1D2}
Z_{t+1}^{D_1} = Z_t^{D_1} + \frac{\bar m^{in}}{T_{t+1}} \left(Z_t^{P_2}-Z_t^{D_1}+\Delta M_{t+1}^{D_1}\right) \text{ and }
Z_{t+1}^{D_2} = Z_t^{D_2} +
\frac{\bar m^{in}}{T_{t+1}}
\left(Z_t^{P_1}-Z_t^{D_2}+\Delta M_{t+1}^{D_2}\right).
\end{equation}

From \eqref{cycle:P1P2} and \eqref{cycle:D1D2}, the process $S_t:=Z_t^{P_1}+Z_t^{D_2}-Z_t^{P_2}-Z_t^{D_1}$ is a bounded martingale, and hence admits an almost sure limit, say $S_\infty$. Next, define $V_t = \left( Z_t^{P_1}+Z_t^{D_1},\; Z_t^{P_1}+Z_t^{P_2},\; Z_t^{D_1}+Z_t^{D_2}
\right)$. Using \eqref{cycle:P1P2} and \eqref{cycle:D1D2}, we get
\[
V_{t+1}
=
V_t
+
\frac{\bar m^{in}}{T_{t+1}}
\left[
(p_1,p_1+p_2,0)
+
V_t
\begin{pmatrix}
-2 & 0 & 0\\
1 & -1 & 1\\
0 & -1 & -1
\end{pmatrix}
+
\Delta N_{t+1}
\right],
\]
where $\Delta N_{t+1}$ is a bounded martingale difference. The matrix above
has eigenvalues $-2$ and $-1\pm i$, and the corresponding ODE has the unique
stable equilibrium $(n,n,n)=(N/4,N/4,N/4)$. Hence, by the standard stochastic approximation argument, \(V_t\to(n,n,n)\)
almost surely. Thus \(Z_t^{P_1}+Z_t^{D_1}\), \(Z_t^{P_1}+Z_t^{P_2}\), and
\(Z_t^{D_1}+Z_t^{D_2}\) all converge almost surely to \(n\).

Combining these
three limits with \(S_t\to S_\infty\), we get
\(Z_t^{P_1},Z_t^{D_2}\to(S_\infty+2n)/4\) and
\(Z_t^{P_2},Z_t^{D_1}\to n-(S_\infty+2n)/4\) almost surely. Since each block
has size \(n\), setting \(\xi_\infty=(S_\infty+2n)/(4n)\) gives
\(\bar Z_t^{P_1},\bar Z_t^{D_2}\to\xi_\infty\) and
\(\bar Z_t^{P_2},\bar Z_t^{D_1}\to1-\xi_\infty\) almost surely.
\end{proof}

We now prove a version of Theorem~\ref{thm:randomlimit} for alternating even cycles. Here we give an alternative proof using the convergence theorem for non-negative almost supermartingales (see \cite{RS1971}).

\begin{theorem}\label{thm:cycle2}
Assume that {\rm \bf (B3)} holds. Then there exists a random variable $\xi_\infty$
taking values in $[0,1]$ such that
\[
Z_t(i)\to \xi_\infty
\quad\text{for all } i\in P_1\cup D_2,
\qquad
Z_t(i)\to 1-\xi_\infty
\quad\text{for all } i\in P_2\cup D_1,
\]
almost surely.
\end{theorem}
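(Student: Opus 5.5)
The plan is to reduce the statement to a consensus problem on the directed cycle, and then obtain within-block synchronization from a Robbins--Siegmund almost supermartingale argument \cite{RS1971}. Theorem~\ref{thm:cycle1} identifies the common limit.

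\textbf{Step 1 (change of variables).} Define
\[
Y_t(i)=Z_t(i)\quad\text{for } i\in P_1\cup D_2,\qquad Y_t(i)=1-Z_t(i)\quad\text{for } i\in P_2\cup D_1 .
\]
Following the parent map $\hat i$, the parents lie in $D_1$ for $i\in P_1$, in $P_1$ for $i\in D_2$, in $D_2$ for $i\in P_2$, and in $P_2$ for $i\in D_1$. Substituting into \eqref{recur_cycle} and checking the four cases, every coordinate satisfies the same linear update
\[
Y_{t+1}(i)=Y_t(i)+a_t\bigl(Y_t(\hat i)-Y_t(i)+\Delta\widetilde M_{t+1}(i)\bigr),\qquad a_t=\frac{\bar m^{in}}{T_{t+1}},
\]
where $\Delta\widetilde M_{t+1}(i)=\pm\Delta M_{t+1}(i)$ is a bounded martingale difference. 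For instance, for $i\in P_1$ the drift $1-Z_t(\hat i)-Z_t(i)$ equals $Y_t(\hat i)-Y_t(i)$. In vector form this reads $Y_{t+1}=Y_t+a_t(Y_t(\Pi-I)+\Delta\widetilde M_{t+1})$, where $\Pi$ is the permutation matrix of the cycle. Since $T_{t+1}=T_0+(t+1)\bar m^{in}$, we have $\sum_t a_t=\infty$ and $\sum_t a_t^2<\infty$.

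\textbf{Step 2 (Lyapunov function).} Let $J=N^{-1}\bone^\top\bone$ and $L_t=\|Y_t(I-J)\|^2$. Because $\Pi$ is doubly stochastic, $\Pi$ commutes with $J$, so $\bar Y_t=N^{-1}Y_t\bone^\top$ is a bounded martingale. Expanding the square and using the commutation gives
\[
\IE[L_{t+1}\mid\FF_t]\le L_t+2a_t\langle x_t,x_t(\Pi-I)\rangle+Ca_t^2,\qquad x_t=Y_t(I-J),
\]
for a deterministic constant $C$. The cross term satisfies
\[
\langle x,x(\Pi-I)\rangle=-\tfrac12\sum_i\bigl(x_{\hat i}-x_i\bigr)^2\le -c\|x\|^2
\quad\text{for } x\perp\bone,
\]
where $c=1-\cos(2\pi/N)>0$ is the spectral gap of the symmetrized cycle. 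Hence
\[
\IE[L_{t+1}\mid\FF_t]\le L_t-2c\,a_tL_t+Ca_t^2 .
\]

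\textbf{Step 3 (conclusion).} The Robbins--Siegmund theorem then yields two facts: $L_t$ converges almost surely, and $\sum_t a_tL_t<\infty$ almost surely. Since $\sum_t a_t=\infty$, the limit of $L_t$ must be $0$, so $Y_t(i)-\bar Y_t\to0$ for every $i$. Now $\bar Y_t$ is an average of the block means $\bar Z_t^{P_1}$, $\bar Z_t^{D_2}$, $1-\bar Z_t^{P_2}$ and $1-\bar Z_t^{D_1}$. By Theorem~\ref{thm:cycle1} each of these converges almost surely to $\xi_\infty$, so $\bar Y_t\to\xi_\infty$. It follows that $Z_t(i)\to\xi_\infty$ on $P_1\cup D_2$ and $Z_t(i)=1-Y_t(i)\to1-\xi_\infty$ on $P_2\cup D_1$.

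\textbf{Main obstacle.} The crux is Step~1: verifying that the signs in \eqref{recur_cycle}, together with the alternating parent structure $P_1\to D_2\to P_2\to D_1\to P_1$ (including the wrap-around $\hat 1=3n$), turn into exactly the homogeneous consensus drift with no constant term. If an affine constant survived in some block, I would instead centre $Y_t$ at the equilibrium obtained from the block-level ODE of Theorem~\ref{thm:cycle1} and rerun Step~2.
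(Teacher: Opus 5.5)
Your proof is correct, and it takes a different and more economical route than the paper's.

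\textbf{What the paper does.} It stays in the original coordinates and works with the block projection $D_t=\Z_t(I-\tilde J)$. It uses only that $QA$ is orthogonal, so $K_tK_t^\top\preceq I$. This makes $\|D_t\|^2$ an almost supermartingale that converges, but the argument does not show that it decays. The paper then separately proves $\IE\|D_t\|^2\to0$. It diagonalizes $I-QA$ (characteristic polynomial $x^N-1$), runs Euler-product estimates on the mean and on the variance (using $\operatorname{Var}(D_0)=0$), and combines the two facts by bounded convergence. The block limits come from Theorem~\ref{thm:cycle1}.

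\textbf{What you do differently.} Your sign flip on $P_2\cup D_1$ is right: your four-case check is correct, equivalently $SQAS=A$ for $S=\operatorname{diag}(\pm1)$, and no affine term survives. This turns the system into a plain consensus recursion on the directed cycle. By projecting onto $\bone^\perp$, you keep the dissipation term that the paper discards. The spectral gap $c=1-\cos(2\pi/N)$ gives the drift $-2ca_tL_t$, so Robbins--Siegmund together with $\sum_t a_t=\infty$ yields $L_t\to0$ directly. You need no mean/variance computation and no use of deterministic initial data.

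You also do not really need Theorem~\ref{thm:cycle1}. Your $\bar Y_t=(S_t+2n)/(4n)$ is exactly the paper's bounded martingale, so its almost sure limit already serves as $\xi_\infty\in[0,1]$. Your argument thus gives within-block and cross-block synchronization in one stroke.

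\textbf{What the paper's route buys.} It is uniform with the mean/variance machinery of Proposition~\ref{prop:Var} and Theorem~\ref{thm:randomlimit}, which does not depend on normality. Your Lyapunov step uses that $\Pi$ is orthogonal, so the symmetric part of $\Pi-I$ is negative definite on $\bone^\perp$. For a general non-normal $Q\widetilde A$, the Euclidean norm need not be a Lyapunov function, and one would have to pass to a weighted norm.

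Within the cycle setting, the paper's step (ii) is in fact avoidable. Its own identity $I-K_tK_t^\top=a_t(1-a_t)(I-QA)(I-A^\top Q)$, combined with $D_t\hat v^\top=0$, already gives the same strict contraction.
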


\begin{proof}[Proof of Theorem~\ref{thm:cycle2}]
Define $D_t=\Z_t(I-\tilde J)$. By Theorem~\ref{thm:cycle1}, it is
enough to show that $D_t\to{\bf 0}$ almost surely. We do this in two steps:
\begin{enumerate}[(i)]
\item $\|D_t\|^2$ admits an almost sure limit.
\item $\IE[\|D_t\|^2]\to0$.
\end{enumerate}
Together, these two statements imply $D_t\to{\bf 0}$ almost surely. From \eqref{recur_cycle}, in vector form, and using
\(\bone_{\mathcal D}A=\bone_{\mathcal P}\), we get
\[
\IE[\Z_{t+1}\mid\FF_t]
=
\Z_t
\left[
\left(1-\frac{\bar m^{in}}{T_{t+1}}\right)I
+
\frac{\bar m^{in}}{T_{t+1}}QA
\right]
+
\frac{\bar m^{in}}{T_{t+1}}\bone_{\mathcal P}.
\]
Since $\bone_{\mathcal P}(I-\tilde J)=0$ and
$QA(I-\tilde J)=(I-\tilde J)QA$, we get
\[
\IE[D_{t+1}\mid\FF_t]
=
D_tK_t,
\qquad
K_t:=I-\frac{\bar m^{in}}{T_{t+1}}(I-QA).
\]

\noindent\textit{Proof of (i).}
Write $D_{t+1}=D_tK_t+\Delta D_{t+1}$, where $\Delta D_{t+1}$ is a martingale
difference. Since the urn increments are bounded and $T_{t+1}$ grows linearly,
there exists a constant $C>0$ such that $\IE[\|\Delta D_{t+1}\|^2\mid\FF_t]\leq \frac{C}{T_{t+1}^2}$. Therefore,
\[
\IE[\|D_{t+1}\|^2\mid\FF_t]
=
\|D_tK_t\|^2+\IE[\|\Delta D_{t+1}\|^2\mid\FF_t].
\]
Since  \(AA^\top=I\), \(QA\) is orthogonal. Using this, we get
\[
I-K_tK_t^\top
=
\frac{\bar m^{in}}{T_{t+1}}
\left(1-\frac{\bar m^{in}}{T_{t+1}}\right)
\{2I-QA-A^\top Q\}
=
\frac{\bar m^{in}}{T_{t+1}}
\left(1-\frac{\bar m^{in}}{T_{t+1}}\right)
(I-QA)(I-A^\top Q).
\]
Thus $I-K_tK_t^\top$ is positive semidefinite, and hence $\IE[\|D_{t+1}\|^2\mid\FF_t]
\leq
\|D_t\|^2+\frac{C}{T_{t+1}^2}$.
Since $\sum_tT_{t+1}^{-2}<\infty$, $\|D_t\|^2$ is an almost supermartingale,
and hence admits an almost sure limit.

\

\noindent\textit{Proof of (ii).} We show that both $\IE[D_t]$ and $\operatorname{Var}(D_t)$ converge to zero.
From the preceding conditional expectation recursion, $\IE[D_{t+1}] =
\IE[D_0]\prod_{i=0}^{t}K_i$. It is easy to verify that the characteristic polynomial of $QA$ is $x^N-1$. Thus $QA$ is diagonalizable and its eigenvalues are simple. Write
$I-QA=P\Lambda P^{-1}$, where
$\Lambda=\operatorname{diag}(\nu_1,\ldots,\nu_N)$.
Let $s$ be such that $\nu_s=0$, and let $e_s$ denote the $s$-th standard row vector. Since the right eigenvector $Pe_s^\top$ corresponding to $\nu_s=0$ is constant on the four parts of the partition, we have
$(I-\tilde J)Pe_s^\top=0$. Hence $D_0Pe_s^\top=0$.
For every $k\neq s$, $\operatorname{Re}(\nu_k)>0$, and Euler's product estimate gives
\[
\prod_{i=0}^{t}
\left(1-\frac{\bar m^{in}}{T_{i+1}}\nu_k\right)
\longrightarrow0.
\]
Therefore $\IE[D_t]\to{\bf 0}$. We now consider the variance.
Let $G_k:=\IE[\operatorname{Var}(\I_{k+1}\mid\FF_k)]$.
The entries of $G_k$ are uniformly bounded. Applying the
projection $I-\tilde J$ to the recursion for $\Z_t$, and then
applying the variance decomposition, gives
\[
\operatorname{Var}(D_{t+1}) = K_t^\top\operatorname{Var}(D_t)K_t
+ \frac{(\bar m^{in})^2}{T_{t+1}^2}
(I-\tilde J)^\top A^\top G_t A(I-\tilde J).\]
Iterating this recursion, and using the fact that the initial configuration is
deterministic, so that \(\operatorname{Var}(D_0)=0\), gives
\[
\operatorname{Var}(D_{t+1}) = \sum_{k=0}^{t} \frac{(\bar m^{in})^2}{T_{k+1}^2} \left(\prod_{i=k+1}^{t}K_i^\top\right)
(I-\tilde J)^\top A^\top G_k A (I-\tilde J) \left(\prod_{i=k+1}^{t}K_i\right),
\]
where an empty product is understood to be the identity matrix. Using the diagonalization of $I-QA$, we have
\[
\prod_{i=k+1}^{t}K_i = P\prod_{i=k+1}^{t} \left(I-\frac{\bar m^{in}}{T_{i+1}}\Lambda\right)P^{-1}.
\]
Thus the corresponding middle term in the eigenbasis has entries of the form
\[
\Delta_t(p,q) = \sum_{k=0}^{t}
\frac{(\bar m^{in})^2}{T_{k+1}^2}
\left[
\prod_{i=k+1}^{t}
\left(1-\frac{\bar m^{in}}{T_{i+1}}\nu_p\right)
\right]
\delta_k(p,q)
\left[
\prod_{i=k+1}^{t}
\left(1-\frac{\bar m^{in}}{T_{i+1}}\nu_q\right)
\right],
\]
where $\delta_k(p,q)$ denotes the $(p,q)$-th entry of
$P^\top (I-\tilde J)^\top A^\top G_k A(I-\tilde J)P$. These entries are uniformly bounded. Moreover,
because $(I-\tilde J)Pe_s^\top=0$, the $s$-th row and the $s$-th column of this middle
matrix are zero. Hence only indices with non-zero eigenvalues contribute.
For $p,q\neq s$, Euler's product estimate gives
\[
|\Delta_t(p,q)|
\leq
C
t^{-(\operatorname{Re}(\nu_p)+\operatorname{Re}(\nu_q))}
\sum_{k=1}^{t}
\frac{1}{k^{2-(\operatorname{Re}(\nu_p)+\operatorname{Re}(\nu_q))}}.
\]
Consequently,
\[
|\Delta_t(p,q)|
=
\begin{cases}
\mathcal O(t^{-1}),&
\text{if }\operatorname{Re}(\nu_p)+\operatorname{Re}(\nu_q)>1,\\
\mathcal O((\log t)/t),&
\text{if }\operatorname{Re}(\nu_p)+\operatorname{Re}(\nu_q)=1,\\
\mathcal O\!\left(t^{-(\operatorname{Re}(\nu_p)+\operatorname{Re}(\nu_q))}\right),&
\text{if }0<\operatorname{Re}(\nu_p)+\operatorname{Re}(\nu_q)<1.
\end{cases}
\]
Since there are only finitely many indices and all non-zero eigenvalues have
positive real parts, this implies $\operatorname{Var}(D_t)\to0$. Therefore
\[
\IE[\|D_t\|^2]
=
\|\IE[D_t]\|^2+\operatorname{Tr}(\operatorname{Var}(D_t))
\longrightarrow0.
\]

By (i), $\|D_t\|^2$ has an almost sure limit. Since $D_t$ is bounded and
$\IE[\|D_t\|^2]\to0$, bounded convergence implies that this almost sure limit
is zero. Hence $D_t\to{\bf 0}$ almost surely.

Finally, $D_t=\Z_t(I-\tilde J)\to{\bf 0}$ means that each coordinate
synchronizes with its block average. Combining this with Theorem~\ref{thm:cycle1} we get that as $t \to \infty$, for every $i\in P_1\cup D_2$, $Z_t(i) \xrightarrow{a.s.} \xi_\infty$ and for every $i\in P_2\cup D_1$,
$Z_t(i)\xrightarrow{a.s.} 1-\xi_\infty$.
\end{proof}
\color{black}


\section{Stochastic Approximation Theory}\label{app:appendix}
\subsection{Almost-sure convergence and ODE approximation}
\begin{definition}
\label{def:sas}
A recursion for a sequence $\{x_t\}_{t\geq0}$ in $\R^d$ of the form
\begin{equation}
\label{eq:recursion}
x_{t+1}
=
x_t+a_t\bigl(h(x_t)+M_{t+1}+\epsilon_{t+1}\bigr),
\qquad t\geq0,
\end{equation}
is called a \emph{stochastic approximation scheme} if it satisfies the following conditions:
\begin{enumerate}
\item[(1)]
The \emph{map} $h\colon\R^d\to\R^d$ is Lipschitz.

\item[(2)]
The \emph{step sizes} $\{a_t\}_{t\geq0}$ satisfy $a_t\in(0,\infty)$ for all $t\geq0$, $\sum_{t\geq0}a_t=\infty$, and $\sum_{t\geq0}a_t^2<\infty$.

\item[(3)]
The sequence $\{M_t\}_{t\geq1}$ is a martingale
difference sequence with respect to $\{\FF_t\}_{t\geq0}$ and, for
some constant $K>0$,
\[
\IE[\|M_{t+1}\|^2\mid\FF_t]
\leq K(1+\|x_t\|^2)
\qquad \prob\text{-a.s.}
\]

\item[(4)]
The \emph{solution} $\{x_t\}_{t\geq0}$ satisfies $\sup_{t\geq0}\|x_t\|<\infty$ $\prob$-a.s.

\item[(5)]
The \emph{errors} $\{\epsilon_t\}_{t\geq1}$ are adapted to the filtration $\{\FF_t\}_{t\geq0}$, are bounded, and satisfy $\epsilon_t\to0$ $\prob$-a.s.
\end{enumerate}
\end{definition}

The following ODE convergence criterion follows from \cite[Theorem~2.1 and Section~2.2]{B2023}.

\begin{lemma}
\label{lem:sas}
Suppose that conditions~{\rm(1)--(5)} in Definition~\ref{def:sas} hold. Furthermore, suppose that the set of limit points of the solutions to the ODE $y'(u)=h(y(u))$, $u\geq0$, with $y(0)\in\R^d$ coincides with the set of zeroes of $h$, and that these zeroes are isolated. Then the iterates of the recursion in \eqref{eq:recursion} converge $\prob$-a.s. to a possibly sample-path-dependent zero of $h$.
\end{lemma}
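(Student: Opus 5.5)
The statement is Lemma~\ref{lem:sas}, which the paper attributes to \cite[Theorem~2.1 and Section~2.2]{B2023}. The natural proof is the ODE method: show that the interpolated trajectory of the recursion is an asymptotic pseudotrajectory of the flow of $y'=h(y)$, and then use the hypotheses on limit points and isolated zeroes to identify the limit.

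\textbf{Step 1 (interpolation).} Set $s_0=0$ and $s_t=\sum_{k<t}a_k$. Let $\bar x(\cdot)$ be the piecewise linear interpolation with $\bar x(s_t)=x_t$. By (2), $s_t\to\infty$.

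\textbf{Step 2 (noise control).} The martingale $\sum_k a_k M_{k+1}$ has predictable quadratic variation bounded by $K\sum_k a_k^2(1+\|x_k\|^2)$. This is finite almost surely by (2) and (4), so the martingale converges almost surely. For the errors, $\epsilon_t\to0$ by (5). Hence for every $T>0$, the contribution of the error terms over any window $[s_n,s_n+T]$ is at most $T\sup_{k\ge n}\|\epsilon_{k+1}\|\to0$.

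\textbf{Step 3 (tracking).} Fix $T>0$ and let $y^{n}$ be the ODE solution with $y^n(s_n)=x_n$. Combine the Lipschitz property (1), boundedness (4), and the vanishing noise from Step 2 with a discrete Gronwall argument. This gives
\[
\sup_{u\in[s_n,s_n+T]}\|\bar x(u)-y^{n}(u)\|\to0 \quad \text{almost surely}.
\]
Thus $\bar x$ is an asymptotic pseudotrajectory. By Benaïm's theorem, the limit set $L$ of $(x_t)$ is then almost surely nonempty, compact, connected, and internally chain transitive for the flow.

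\textbf{Step 4 (identifying the limit).} By hypothesis, the limit points of ODE solutions coincide with the zero set of $h$, and these zeroes are isolated.

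1. Each point of $L$ is a limit point of flow trajectories inside the invariant set $L$, so it is an equilibrium.
2. Hence $L$ is a connected subset of a discrete set, and so $L$ is a single zero of $h$.
3. Therefore $x_t$ converges almost surely to a zero of $h$, which may depend on the sample path.

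\textbf{Main obstacle.} The delicate step is the passage from internal chain transitivity of $L$ to $L$ being contained in the zero set. The assumption only concerns limit points of trajectories, so the argument needs invariance of $L$ and compactness to apply it. I would handle this by quoting the cited result, since the paper states this lemma as a consequence of \cite{B2023}, rather than reproving it.

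In the paper's applications, $h$ is affine with a Hurwitz matrix, so there is a unique globally asymptotically stable zero. In that case the hypotheses are verified directly: every trajectory converges to that zero.
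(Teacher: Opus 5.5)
Your Steps 1--3 are the standard ODE method. They give exactly what the paper takes from the cited Theorem~2.1: almost surely, the limit set $L$ of $(x_t)$ is a compact, connected, internally chain transitive invariant set. The gap is Step~4(1).

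Compactness and invariance of $L$ only imply that the $\omega$-limit set of each point of $L$ lies in $L$. They do not make every point of $L$ a limit point of some trajectory. Moreover, the lemma's hypothesis (limit points of solutions are exactly the isolated zeros of $h$) does not rule out nontrivial internally chain transitive sets.

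Here is a counterexample. Take, in polar coordinates, $\dot r=r(1-r)$ and $\dot\theta=r(1-\cos\theta)$. In Cartesian form this is $h(x,y)=\bigl(x(1-r)-y(r-x),\;y(1-r)+x(r-x)\bigr)$ with $r=\sqrt{x^2+y^2}$. This $h$ is locally Lipschitz, and multiplying it by $(1+r^2)^{-1}$ makes it globally Lipschitz without changing orbits or zeros.
\begin{itemize}
\item Its zeros are $(0,0)$ and $(1,0)$, and every forward trajectory converges to one of them.
\item Yet the unit circle is compact, connected, invariant and internally chain transitive.
\item No point of the circle other than $(1,0)$ is a limit point of any trajectory.
\end{itemize}
The lemma's conclusion genuinely fails here. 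Take $M_t\equiv0$ and deterministic $\epsilon_t\to0$ chosen to give a small push past $(1,0)$ on each lap. Then conditions (1)--(5) hold, and the iterates circulate forever.

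So deferring to the reference does not close Step~4 either. The convergence-to-equilibria corollary in the cited Section~2.2 assumes that the \emph{only} internally chain transitive invariant sets are isolated equilibria. That is strictly stronger than the hypothesis as paraphrased in the lemma.

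With that hypothesis in place of the stated one, Step~4 is immediate: $L$ is one of these sets, hence a single zero of $h$. In every application in the paper, $h(z)=zH+C$ with $H$ Hurwitz, and the stronger hypothesis is easy to check. The solution through $z_0$ is $z^*+(z_0-z^*)e^{Hu}$. Its backward orbit is unbounded unless $z_0=z^*$, so $\{z^*\}$ is the only compact invariant set.

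Your closing remark that ``every trajectory converges to that zero'' is not what makes this work. In the circle example, every trajectory off the origin also converges to $(1,0)$. The relevant property is that no other compact invariant set exists. This holds, for instance, when the equilibrium is globally asymptotically stable, meaning Lyapunov stable and not merely attracting.
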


\subsection{Martingale and fluctuation estimates}

\begin{theorem}[Duflo {\cite[Theorem~1.3.15]{Duflo1997}}]
\label{aux:duflo_martingale_estimate}
Let $(M_n)$ be a square-integrable martingale with increasing process $\langle M\rangle_n$, and set $\langle M\rangle_\infty:=\lim_{n\to\infty}\langle M\rangle_n$. Then:
\begin{itemize}
\item
On $\{\langle M\rangle_\infty<\infty\}$, $M_n\xrightarrow{\mathrm{a.s.}}M_\infty$, where $M_\infty$ is finite.

\item
On $\{\langle M\rangle_\infty=\infty\}$,
$M_n/\langle M\rangle_n\xrightarrow{\mathrm{a.s.}}0$. More precisely, for every $\gamma>0$, on $\{\langle M\rangle_\infty=\infty\}$,
\[
M_n
=
o\!\left(
\langle M\rangle_n^{1/2}
(\log\langle M\rangle_n)^{(1+\gamma)/2}
\right)
\qquad\text{a.s.}
\]
\end{itemize}
\end{theorem}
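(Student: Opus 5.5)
The statement is the classical martingale estimate of Duflo, so I will reprove it with a truncation argument for the first part and a Kronecker-lemma argument for the second. Throughout, write $\Delta M_k=M_k-M_{k-1}$ and $\langle M\rangle_n=\sum_{k\le n}\IE[(\Delta M_k)^2\mid\FF_{k-1}]$. The key fact is that $\langle M\rangle_n$ is predictable.

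\textbf{First part.} For a constant $c>0$, define
\[
\tau_c=\inf\{n:\langle M\rangle_{n+1}>c\}.
\]
Because $\langle M\rangle$ is predictable, $\tau_c$ is a stopping time. The stopped martingale $M_{n\wedge\tau_c}$ satisfies
\[
\IE[(M_{n\wedge\tau_c}-M_0)^2]=\IE[\langle M\rangle_{n\wedge\tau_c}]\le c .
\]
So it is bounded in $\mathcal L^2$ and converges almost surely to a finite limit. On the event $\{\langle M\rangle_\infty\le c\}$ we have $\tau_c=\infty$, so $M_n$ itself converges there. Taking the union over $c\in\mathbb N$ gives convergence on $\{\langle M\rangle_\infty<\infty\}$.

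\textbf{Second part.} Fix $\gamma>0$ and set
\[
f(x)=\bigl((1+x)(\log(e+x))^{1+\gamma}\bigr)^{1/2},
\]
which is positive and increasing. Define the martingale transform
\[
N_n=\sum_{k\le n}\frac{\Delta M_k}{f(\langle M\rangle_k)} .
\]
This is a martingale because $f(\langle M\rangle_k)$ is $\FF_{k-1}$-measurable. Its increasing process is
\[
\langle N\rangle_n=\sum_{k\le n}\frac{\langle M\rangle_k-\langle M\rangle_{k-1}}{f(\langle M\rangle_k)^2}.
\]
Since $1/f^2$ is decreasing and is evaluated at the right endpoint of each interval $[\langle M\rangle_{k-1},\langle M\rangle_k]$, each summand is at most the integral of $1/f^2$ over that interval. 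Hence
\[
\langle N\rangle_\infty\le\int_0^\infty\frac{dx}{(1+x)(\log(e+x))^{1+\gamma}}<\infty
\]
surely. By the first part, $N_n$ converges almost surely. On $\{\langle M\rangle_\infty=\infty\}$ we have $f(\langle M\rangle_n)\uparrow\infty$, so Kronecker's lemma gives $M_n/f(\langle M\rangle_n)\to0$. Because $f(x)\sim x^{1/2}(\log x)^{(1+\gamma)/2}$ as $x\to\infty$, this is exactly the stated $o$-bound. The weaker claim $M_n/\langle M\rangle_n\to0$ follows, since $f(x)=o(x)$.

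\textbf{Main obstacle.} The delicate step is the sum–integral comparison for $\langle N\rangle_\infty$. A single increment $\Delta\langle M\rangle_k$ may be large, so one must normalise by $f(\langle M\rangle_k)$ at the right endpoint rather than at $\langle M\rangle_{k-1}$. This is legitimate only because $\langle M\rangle_k$ is predictable, so the transform $N_n$ is still a martingale. With that choice, the comparison holds without any control on the size of the jumps.
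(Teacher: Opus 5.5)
Your proof is correct. You get the first part by localizing at the predictable stopping time $\tau_c$. For the second part, the martingale transform $N_n=\sum_{k\le n}\Delta M_k/f(\langle M\rangle_k)$ has bracket bounded by $\int_0^\infty dx/f(x)^2<\infty$, since $f$ is evaluated at the predictable right endpoint; Kronecker's lemma then yields the $o$-bound.

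The paper gives no proof of its own and simply cites Duflo's Theorem~1.3.15. Your argument is essentially the standard proof of that result, so it supplies exactly what the citation stands in for.
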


\begin{theorem}[Mokkadem--Pelletier {\cite[Theorem~1]{MP2005}}]
\label{aux:MP_compact_LIL}
Consider the stochastic approximation algorithm
\[
Z_{n+1}
=
Z_n+\gamma_n\bigl[h(Z_n)+r_{n+1}\bigr]
+\sigma_n\xi_{n+1},
\]
where $Z_n\in\IR^d$, $\gamma_n>0$, and $\sigma_n>0$. Set $v_n:=\gamma_n/\sigma_n^2$ and $s_n:=\sum_{k=1}^{n}\gamma_k$. Assume the following conditions.

\begin{enumerate}
\item[{\bf (1)}]
There exists a deterministic point $z_\star\in\IR^d$ such that $Z_n\to z_\star$ almost surely.

\item[{\bf (2)}]
The function $h$ is differentiable at $z_\star$, and its Jacobian matrix $H:=Dh(z_\star)$ satisfies
\[
\max\{\mathrm{Re}(\mu):\mu\in\mathrm{Sp}(H)\}=-L<0.
\]

\item[{\bf (3)}]
We have $\sigma_n\to0$, $\gamma_n\to0$, $s_n\to\infty$, $\gamma_n/\gamma_{n+1}=1+O(\gamma_n)$, and there exists $\rho_{\mathrm{MP}}\in[0,2L)$ such that
\[
\lim_{n\to\infty}
\frac1{\gamma_n}
\left(1-\frac{v_{n-1}}{v_n}\right)
=
\rho_{\mathrm{MP}}.
\]

\item[{\bf (4)}]
There exist a positive definite matrix $\Gamma$ and independent $\mathcal N(0,\Gamma)$ random vectors $(\eta_k)_{k\geq1}$ such that
\[
\left\|
\sum_{k=1}^{n}\xi_{k+1}
-
\sum_{k=1}^{n}\eta_{k+1}
\right\|_2
=
o\!\left(
\sqrt{\frac{\log s_n}{\gamma_n}}
\right)
\qquad\text{a.s.}
\]

\item[{\bf (5)}]
The remainder satisfies $r_{n+1}=r_{n+1}^{(1)}+o(\|Z_n-z_\star\|_2)$ a.s. with $\|r_{n+1}^{(1)}\|_2=O(v_n^{-1/2})$ a.s.
\end{enumerate}

Define
\[
\Sigma
:=
\int_0^\infty
\exp\!\left((H+\tfrac{\rho_{\mathrm{MP}}}{2}I)u\right)^\top
\Gamma
\exp\!\left((H+\tfrac{\rho_{\mathrm{MP}}}{2}I)u\right)
\,du.
\]
Then, with probability one, the sequence
$\left\{\sqrt{v_n/(2\log s_n)}\,(Z_n-z_\star)\right\}_{n\geq1}$
is relatively compact in $\IR^d$, and its set of limit points is the ellipsoid
$\{x\in\IR^d:x\Sigma^{-1}x^\top\leq1\}$.
\end{theorem}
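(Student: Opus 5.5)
The plan is to reduce the statement to a compact law of the iterated logarithm for a linear, time-inhomogeneous Gaussian recursion. I will linearize around $z_\star$, discard the remainder terms, and then transfer the martingale noise to the Gaussian sequence $(\eta_k)$ supplied by condition \textbf{(4)}.

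\textbf{Step 1 (linearization).} Set $\Delta_n:=Z_n-z_\star$. Using \textbf{(1)}, \textbf{(2)} and \textbf{(5)}, write
\[
\Delta_{n+1}=(I+\gamma_nH)\Delta_n+\gamma_n r^{(1)}_{n+1}+\gamma_n\,o(\|\Delta_n\|)+\sigma_n\xi_{n+1}.
\]
Put $\Pi_{k,n}:=\prod_{j=k}^{n}(I+\gamma_jH)$. Since $\max\operatorname{Re}\,\mathrm{Sp}(H)=-L$, for every $\varepsilon>0$ there is the Euler-product bound
\[
\|\Pi_{k,n}\|\le C_\varepsilon\, e^{-(L-\varepsilon)(s_n-s_k)}.
\]
Variation of constants then splits $\Delta_n$ into four parts: an initial term that decays exponentially in $s_n$, a term driven by $r^{(1)}$, a term driven by $o(\|\Delta\|)$, and the noise term
\[
L_n:=\sum_k\Pi_{k+1,n}\sigma_k\xi_{k+1}.
\]

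\textbf{Step 2 (negligible terms).} By \textbf{(3)}, $v_{k}/v_n\approx e^{-\rho_{\mathrm{MP}}(s_n-s_k)}$, and we have $\rho_{\mathrm{MP}}<2L$. Using this with $\|r^{(1)}_{k+1}\|=O(v_k^{-1/2})$, the $r^{(1)}$-term is $O(v_n^{-1/2})$. This is $o\big(\sqrt{\log s_n/v_n}\big)$. For the $o(\|\Delta\|)$ term I will use a Gronwall-type argument on $\sqrt{v_n/\log s_n}\,\|\Delta_n\|$, once an a priori bound of the same order for $L_n$ is available from Step 4. With that, this term also contributes $o\big(\sqrt{\log s_n/v_n}\big)$.

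\textbf{Step 3 (Gaussian coupling).} I will rewrite $L_n$ by Abel summation in terms of the partial sums $S_k:=\sum_{j\le k}\xi_{j+1}$. The same transformation is applied to the Gaussian sums $\sum_{j\le k}\eta_{j+1}$. Condition \textbf{(4)} makes the difference of partial sums $o\big(\sqrt{\log s_k/\gamma_k}\big)$. The summation-by-parts weights are $\Pi_{k+1,n}\sigma_k-\Pi_{k+2,n}\sigma_{k+1}$, which is of order $\gamma_k\sigma_k$ by \textbf{(3)}. So the resulting error is again $o\big(\sqrt{\log s_n/v_n}\big)$. It therefore suffices to prove the claim with $L_n$ replaced by the Gaussian analogue $L_n^G$.

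\textbf{Step 4 (Gaussian LIL; main obstacle).} The vector $\sqrt{v_n}\,L_n^G$ is centred Gaussian. In the time scale $u=s_n$ it behaves like an Ornstein--Uhlenbeck process with drift $H+\tfrac{\rho_{\mathrm{MP}}}{2}I$ and noise covariance $\Gamma$. Its covariance converges to $\Sigma$, and its correlations between times $u$ and $u'$ decay like $e^{-(L-\rho_{\mathrm{MP}}/2-\varepsilon)|u-u'|}$.

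For the upper bound, fix a unit vector $x$ and take a grid of times with $s$-spacing $\delta$. The Gaussian tail estimate together with Borel--Cantelli gives $\limsup\, x\cdot\sqrt{v_n}L^G_n/\sqrt{2\log s_n}\le\sqrt{x\Sigma x^\top}$. Between grid points I will control the oscillation with a Gaussian maximal inequality. A countable dense set of directions then yields the containment of the limit set in the ellipsoid.

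For the lower bound I will take a sparse subsequence with $s_{n_k}=k\Delta$ for large $\Delta$. Along it the projections are nearly independent, and the second Borel--Cantelli lemma reaches every point of the boundary of the ellipsoid. Every interior point is then reached by a Strassen-type argument. Identifying the full ellipsoid as the limit set, and not merely a bounded region, is the step I expect to be the hardest. I would first try to handle it with the conditional decomposition of the Gaussian vector at time $n_k$ given its value at time $n_{k-1}$, exploiting exponential forgetting.
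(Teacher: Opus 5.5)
The paper does not prove this statement. It is imported from Mokkadem--Pelletier \cite{MP2005} and used as a black box in the proof of Theorem~\ref{thm:joint_compact_LIL_SA}, where only hypotheses (1)--(5) are checked, so there is no in-paper argument to compare with. Your proposal is a self-contained reconstruction along standard lines, and its key estimates hold up:
\begin{enumerate}
\item Condition (3) gives $\log(v_n/v_k)=(\rho_{\mathrm{MP}}+o(1))(s_n-s_k)$ as $k\to\infty$, uniformly in $n\ge k$. Together with $\rho_{\mathrm{MP}}<2L$, this makes the initial and $r^{(1)}$ terms $O(v_n^{-1/2})$. It also lets your Gronwall argument absorb the $o(\|\Delta_n\|)$ term once $L_n=O(\sqrt{\log s_n/v_n})$ is available.
\item Condition (3) also gives $\sigma_{k+1}/\sigma_k=1+O(\gamma_k)$. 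So your Abel-summation weights are $O(e^{-(L-\varepsilon)(s_n-s_k)}\gamma_k\sigma_k)$, and the coupling error inherited from (4) is indeed $o(\sqrt{\log s_n/v_n})$.
\item $\sqrt{v_n}L_n^G$ decorrelates at rate $L-\rho_{\mathrm{MP}}/2>0$ in the time scale $s_n$.
\end{enumerate}
Citing buys the paper brevity. Your route shows where each hypothesis enters, at the cost of proving the Gaussian LIL yourself. That includes the maximal inequality between grid points, which is genuinely needed: a cell of $s$-length $\delta$ can contain exponentially many (in the grid index) indices $n$, so a plain union bound fails.

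Three small points.
\begin{enumerate}
\item Your linearization uses $h(z_\star)=0$. That is part of the usual setting ($z_\star$ a zero of $h$, as in the paper's application with $z_\star=0$), but it is missing from the statement as written, so add it as a hypothesis.
\item Fix the vector convention. With column vectors and $h(z)\approx H(z-z_\star)$, as in your Step 1, the limiting covariance of $\sqrt{v_n}L_n^G$ is $\int_0^\infty e^{(H+\frac{\rho_{\mathrm{MP}}}{2}I)u}\,\Gamma\, e^{(H+\frac{\rho_{\mathrm{MP}}}{2}I)^\top u}\,du$. This equals the stated $\Sigma$ only after replacing $H$ by $H^\top$. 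The statement, like its application with $h(x)=xH$, is written for row vectors.
\item The step you expect to be hardest needs no Strassen-type argument. In your decomposition along $s_{n_k}\approx k\Delta$, the fresh parts are exactly independent, with covariance $\Sigma_\Delta\to\Sigma$ as $\Delta\to\infty$. By the upper bound, the carried-over part is eventually at most $Ce^{-c\Delta}$ after normalization by $\sqrt{2\log s_{n_k}}$. For $x$ with $x\Sigma_\Delta^{-1}x^\top=r^2<1$, the normalized fresh part lies within $\delta$ of $x$ with probability at least $c\,k^{-(r+C\delta)^2}$, which is not summable for small $\delta$. So the second Borel--Cantelli lemma reaches every interior point directly. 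Letting $\Delta\to\infty$ and using that limit sets are closed then gives the whole ellipsoid, boundary included.
\end{enumerate}
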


\bibliographystyle{unsrt}
\bibliography{Bibliography_arxiv}

\end{document}